\UseRawInputEncoding
\documentclass[lettersize,journal]{IEEEtran}
\pdfoutput=1
\usepackage{amsmath,amssymb,bm,indentfirst}
\usepackage{mathrsfs}
\usepackage{algorithmic}
\usepackage{array}
\usepackage[caption=false,font=normalsize,labelfont=sf,textfont=sf]{subfig}
\usepackage{textcomp}
\usepackage{stfloats}
\usepackage{url}
\usepackage{verbatim}
\usepackage{graphicx}
\usepackage{color}
\usepackage{tablists}
\usepackage{authblk}
\usepackage[nocompress]{cite}
\usepackage{booktabs}
\usepackage{amsfonts,amsthm,amssymb,mathrsfs,bbding}
\usepackage{multirow}
\usepackage{setspace}
\usepackage{fancybox}
\usepackage[ruled,vlined,linesnumbered]{algorithm2e}
\usepackage{epstopdf}
\usepackage{xtab}
\hyphenation{op-tical net-works semi-conduc-tor IEEE-Xplore}
\def\BibTeX{{\rm B\kern-.05em{\sc i\kern-.025em b}\kern-.08em
    T\kern-.1667em\lower.7ex\hbox{E}\kern-.125emX}}
\usepackage{balance}
\usepackage{hyperref}
\newtheorem{lem}{Lemma}[section]
\newtheorem{thm}[lem]{Theorem}
\newtheorem{cor}[lem]{Corollary}

\newtheorem{rem}[lem]{Remark}
\newtheorem{defi}[lem]{Definition}

\date{}
\title{Reliability Evaluation of Generalized $K_4$-Hypercubes Based on Five Link Fault Patterns
\thanks{This work received partial support from
the National Natural Science Foundation of China (No. 12101528),
the Basic scientific research in universities of Xinjiang Uygur Autonomous Region (No. XJEDU2024P012),  and the Science and Technology Project of Xinjiang Uygur Autonomous Region (No. 2020D01C069). (\textit{Corresponding author: Mingzu Zhang}.)}}
\author{ Shuqian Cheng, Mingzu Zhang, Sun-Yuan Hsieh, \textit{Fellow, IEEE}, and Eddie Cheng  \thanks{Shuqian Cheng and  Mingzu Zhang are with the College of Mathematics and System Sciences, Xinjiang University,  Urumqi, Xinjiang, 830046,  China (e-mail:shuqcheng@163.com;mzuzhang@163.com).
 \par Sun-Yuan Hsieh is with the Department of Computer Science and Information Engineering, National Cheng Kung University, Tainan 70101, Taiwan (e-mail: hsiehsy@mail.ncku.edu.tw).
 \par Eddie Cheng is with the Department of Mathematics and Statistics, Oakland University, Rochester, MI 48309 USA (e-mail: echeng@oakland.edu).}}

\begin{document}
	\maketitle
\begin{abstract}
As the scale of data centers continues to grow, there is an increasing demand for interconnection networks to resist malicious attacks.
Hence, it is necessary to evaluate the reliability of networks under various fault patterns.
The family of generalized $K_4$-hypercubes serve as  interconnection networks of data centers, characterized by topological structures with exceptional properties.
The $h$-extra edge-connectivity $\lambda_h$, the $l$-super edge-connectivity $\lambda^l$, the $l$-average degree edge-connectivity $\overline{\lambda^l}$, the $l$-embedded edge-connectivity $\eta_l$ and the cyclic edge-connectivity $\lambda_c$ are vital parameters to accurately assess the reliability of interconnection networks. Let integer $n\geq3$.
This paper obtains the optimal solution of the edge isoperimetric problem and its explicit representation, which offers an upper bound of the $h$-extra edge-connectivity of an $n$-dimensional $K_4$-hypercube $H_n^4$.
As an application, we presents $\lambda_h(H_n^4)$ for $1\leq h\leq 2^{\lceil n/2  \rceil }$.
Moreover, for $2^{\lceil n/2\rceil+t}-g_t \le h\le2^{\lceil n/2\rceil+t}$, $g_t=\lceil(2^{2t+2+\gamma})/3\rceil$,
 $0\leq t \leq\lfloor n/2\rfloor-1 $,  $\gamma=0$ for even $n$ and $\gamma=1$ for odd $n$, $\lambda_h(H_n^4)$ is a constant $(\lfloor n/2\rfloor-t)2^{\lceil n/2\rceil+t}$.
The above lower and upper bounds of the integer $h$ are both sharp.
Furthermore, $\lambda^l(H_n^4)$, $\overline{\lambda^l}(H_n^4)$, $\lambda_{2^l}(H_n^4)$, and $\eta_l(H_n^4)$ share a common value $(n-l)2^l$ for  $2\leq l\leq n-1$, and we determines the values of $\lambda_c(H_n^4)$.

\end{abstract}
\begin{IEEEkeywords}
Generalized $K_4$-hypercubes, $h$-extra edge-connectivity, interconnection networks, $\mathscr{P}_i^l$-conditional edge-connectivity, reliability evaluation.
\end{IEEEkeywords}
\section*{Notations}
\noindent
\begin{tabular}{ll}
$G(V, E)$ & A simple connected graph.\\
$E(G)$ & Edge set of graph $G$.\\
$V(G)$ & Vertex set of graph $G$.\\
$G\cong H$ & Graph $G$ and graph $H$ are isomorphic.\\
$G[X]$ & The  subgraph of $G$ induced by the set of \\& vertices $X$.\\
$[X,\overline{X}]_G$ & Set of edges of $G$ with exactly one endpoint in\\& $X \subseteq V (G)$.\\
$G-F$ & The subgraph obtained by removing a subset  $F$  of \\&$E(G)$ from $G$.\\
\end{tabular}

\noindent
\begin{tabular}{ll}
$ex_m(G)$ & The maximum sum of degree of subgraphs \\&induced by $m$ vertices of $G$.\\
$\xi_m(G)$ & The minimum cardinality of $[X,\,\overline{X}]$ with $|X|$\\&$=m\leq \lfloor|V (G)|/2\rfloor$, $G[X]$ and $G[\overline{X}]$ are both\\&connected.\\
$\lambda_h(G)$ & $h$-Extra edge-connectivity of $G$.\\
$\lambda^l(G)$ &  $l$-Super edge-connectivity of $G$. \\
$\overline{\lambda^l}(G)$ & $l$-Average degree edge-connectivity  of $G$. \\
$\eta_l(G)$ &  $l$-Embedded edge-connectivity of $G$.\\
$\lambda_c(G)$ &   Cyclic edge-connectivity of $G$.\\
$Q_n$ & An $n$-dimensional hypercube.\\
$\mathscr{H}_n^4$ & The family of generalized $K_4$-hypercubes.\\
$H_n^4$ & An $n$-dimensional $K_4$-hypercube.\\
$Q_{n,k}$ & An $n$-dimensional $(n,k)$-enhanced hypercube.\\
$\sum_{i=0}^{s}2^{t_i}$ & Binary decomposition of a positive  integer  with\\& $t_0>t_1>\cdots>t_s$.\\
$\lfloor x\rfloor$ $(\lceil x\rceil)$ & Floor (Ceiling) function of $x$.\\
$\gamma$ & $\gamma=0$ for even $n$   and $\gamma=1$ for odd $n$.\\
\end{tabular}

\section{Introduction}

\IEEEPARstart{I}{n} the present society, there is an overwhelming volume of big data, with approximately 294 billion emails sent  daily  (Stergiou et al. \cite{stergiou2020secure}, 2020).
With the popularization of the fifth-generation mobile communication technology (5G), and the rapid development of artificial intelligence,  the  demand for data centers is also increasing.
Large-scale data centers such as Internet of Things applications (IoT) are intertwined with big data \cite{wang2024machine}. After the product collects the data, it needs to be uploaded to the cloud for centralized processing.
Unfortunately, the incidence of malicious attacks targets  data centers, interconnection networks, software, applications, or data.
These attacks may exploit a variety of techniques and methods to violate the confidentiality, integrity, and availability of data center  networks (DCNs), posing threats to individuals, organizations, and government institutions. Some common types of cyber attacks include computer viruses, Distributed Denial-of-Service (DDoS) attacks, Structured Query Language (SQL) injection attacks, and so on. As modern society increasingly relies on digital technologies, cyber security issues are becoming more and more prominent.
To ensure that DCNs can still function effectively in the face of cyber attacks, a key method is to achieve fault tolerance through effective network topology.
Lv et al. \cite{Lv2023AHA} designed a server-centric DCN (called CSDC) with
excellent performance.
In detail, nodes in the network (such as processors) are represented as vertices and  links between nodes as edges.
The research on  fault tolerance of data centers focuses on the characterization of  the connectivity and reliability of networks. By studying the connectivity and reliability of the topology of networks, the ability of  data centers to withstand processor or link failures can be effectively assessed, providing important insights and guidance for the construction of efficient and stable DCNs.

\begin{figure}[t!]
	\begin{center} \includegraphics[width=1\linewidth]{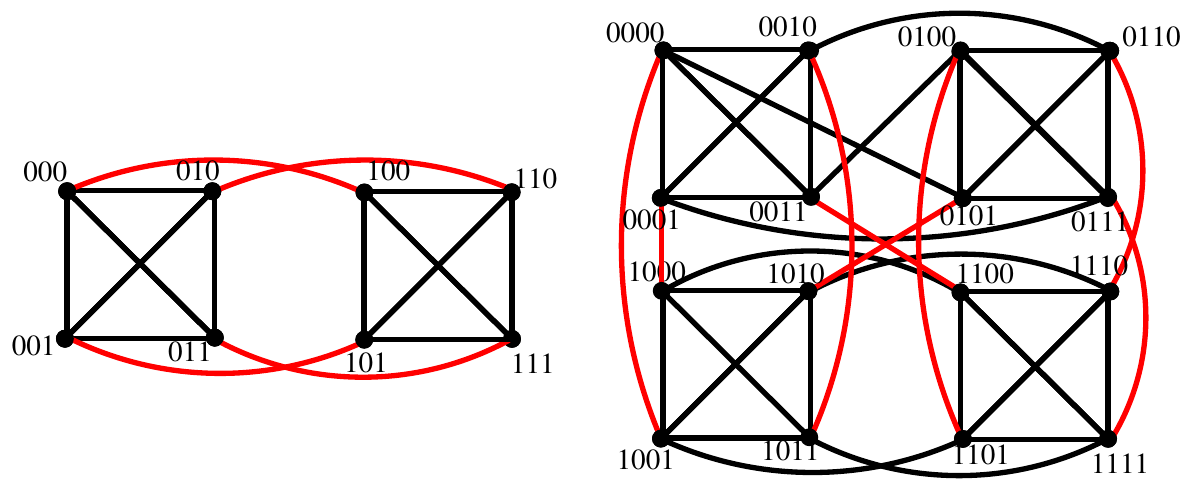}
		\caption{The graphs of $H_3^4$ $(Q_{3,2})$ and $H_4^4$.}
\label{The graphs of $H_3^4$ and $H_4^4$.}
	\end{center}
\end{figure}

The edge-connectivity $\lambda(G)$ of a connected graph $G$  \cite{menger1927allgemeinen}, if any, is a significant parameter that indicates the reliability  of  interconnection networks.
 It is defined as the minimum number of edges removed from the connected graph  $G$ to disconnect  $G$.
 The larger the edge-connectivity is, the more reliable an interconnection network is \cite{xu2001topological}.
 To  investigate  the properties of the remaining components, in 1983, Harary \cite{harary1983conditional} raised the concept of conditional edge-connectivity.
Assume that $\mathscr{P}$ is a property of a connected graph $G$.
 A $\mathscr{P}$-conditional  edge-cut of $G$, if exists, is a subset $F\subseteq E(G)$, such that $G-F$ is no longer connected and each connected component possesses the property $\mathscr{P}$.
The $\mathscr{P}$-conditional edge-connectivity of a connected graph $G$, if exists, is defined as the smallest size among all $\mathscr{P}$-conditional  edge-cuts of $G$, denoted by $\lambda(\mathscr{P},\,G)$.

Diverse properties raise the investigation of various $\mathscr{P}$-conditional edge-connectivities.
Let $h$ be a given integer.  With  $\mathscr{P}$ meaning that each component contains no less than $h$ vertices, $h$-extra edge-connectivity, denoted by $\lambda_{h}(G)$,
was first proposed by F\`{a}brega and Fiol \cite{fabrega1996extraconnectivity} in 1996.
With $\mathscr{P}$ representing that minimum degree at least $l$, the conditional edge-connectivity under this condition is called $l$-super edge-connectivity $\lambda^l(G)$,
 which was introduced by Li and Xu \cite{li2013edge}.
Lately, the $l$-average degree edge-connectivity $\overline{\lambda^l}(G)$ of $G$ was presented by Yang et al. \cite{yang2023average} in 2023, with $\mathscr{P}$ symbolizing that the average degree no less than $l$.
Moreover, if  $\mathscr{P}$ means that every vertex lies in an intact $l$-dimensional subnetwork of a recursive $n$-dimensional network $G$, the initial proposal of the related conditional edge-connectivity was made by
Yang and Wang \cite{yang2012conditional}, referred to the $l$-embedded edge-connectivity $\eta_l(G)$.
In 1880, the cyclic edge-connectivity  $\lambda_c(G)$, launched by Tait \cite{tait1880remarks},  is the minimum number of edges removed from $G$  and the residual graph has at least two components  holding cycles.
Let $\lambda^l(G),~\overline{\lambda^l}(G),~\lambda_{2^l}(G),~\eta_l(G),$ and $\lambda_c(G)$ be represented as $\lambda{(\mathscr{P}_i^l,~G)}$  for $i=1$ to $5$, respectively. These various connectivities are introduced  in order to depict the reliability of interconnected networks.

Many scholars studied the exact values of $\lambda_h(G)$ or $\lambda{(\mathscr{P}_i^l,~G)}$ for $1\leq i\leq 5$ of some classic interconnection networks, such as hypercube $Q_n$, folded hypercube $FQ_n$,  and enhanced hypercube $Q_{n,k}$.
Regarding the hypercube $Q_n$, scholars came up with the following conclusions over time.
Zhu and Xu  proved  $\lambda_2(Q_n)=2n-2$ for $n\geq4$ \cite{zhu2006restricted}.
Li and Yang  demonstrated $\lambda_h(Q_n)=nh-(\sum_{i=0}^s t_i 2^{t_i}+\sum_{i=0}^s 2i2^{t_i} )$ with $n\geq6$ and $1\leq h=\sum_{i=0}^{s}2^{t_i}\leq2^{\lfloor n/2 \rfloor}$\cite{li2013bounding}.
Zhang et al.   confirmed $\lambda_h(Q_n)=2^{n-1}$ with $n\geq4$ and $2^{n-1}/3<h\leq 2^{n-1}$\cite{zhang2016extra}.
Xu  bore out $\lambda^l(Q_n)=(n-l)2^l$ for $0\leq l\leq n-1$ in 2000\cite{xu2001topological}.
Recently, Liu et al. studied $g$-good $(r+1)$-component edge-connectivity  of $Q_n$\cite{liu2024reliable}.
Regarding the enhanced hypercube $Q_{n,k}$, Xu et al. studied   the reliability of the enhanced hypercube by $h$-extra connectivity\cite{xu2021reliability}.
Zhang et al.  proved $\lambda^1(Q_{n,~k})=2n$ and $\lambda^2(Q_{n,~k})=4n-4$\cite{Zhang.Liu}.
For $n\geq2$, $1\leq l,\,k\leq n-1$,
in addition, Liu et al.  determined $\lambda^l(Q_{n,~k})=\overline{\lambda^l}(Q_{n,~k})$
but for $l=n-2$  and $k=2$ is $2^{n-1}$ \cite{liu2023many}.
Moreover, Liu et al.  found the modified embedded edge-connectivity of $Q_{n,\,k}$, $\eta_l^\ast(Q_{n,\,k})=(n+1-l)2^l$, $n-k\geq l$ and $\eta_l^\ast(Q_{n,\,k})=(n-l)2^l$ with $n+1-k\leq l\leq n-1$, and they got $\lambda_c(Q_{n,\,k})$ with $n\geq 3$\cite{liu2023modified}.
Regarding the folded hypercube $FQ_n$,
Zhang et al. determined $\lambda^l(FQ_n)=(n-l+1)2^l$ for $0\leq l\leq n-1$\cite{zhang2022unified}.
Furthermore, Zhang et al.  proved  $\lambda_h(FQ_n)=(\lfloor n/2\rfloor-t+1)2^{\lceil n/2\rceil+t}$ for $n\geq 4$, $2^{\lceil n/2\rceil+t-1}\leq h\leq 2^{\lceil n/2\rceil+t}$, and $1\leq t\leq \lfloor n/2\rfloor-1$ in 2018\cite{zhang2018log}.
Recently, Yang  obtained  embedded edge-connectivity of $k$-ary $n$-cubes $\eta_l(Q_n^k)=2(n-l)k^l$, with odd $k\geq3$ and $0\leq l\leq n-1$\cite{yang2023embedded}.
Li et al.  determined the values of the $l$-super edge-connectivity $\lambda^l(Q_n^k)$ for $l\leq n$ and $k\geq3$, in 2024\cite{li2024reliability}.
Xu and Zhou used a unified approach to study the reliability of interconnection networks, including  $Q_n^3,\,Q_{n,k},\,AQ_n$, under different faulting links \cite{xu2023reliability}.
Our results for the class of the generalized $K_4$-hypercubes can be considered as an expansion of the above conclusions.

Due to the  ease  operation of hypercubes, the method of constructing  graphs by adding a matching from lower copies has gained significant attention.
Unlike bijective connection networks\cite{fan2003bc} obtained  repeatedly and randomly by adding a perfect matching between two lower copies from $K_2$, while generalized $K_4$-hypercubes are constructed by Brudnak and Lipt{\'a}k\cite{brudnak2021connectivity} repeatedly and randomly, adding a perfect matching between two lower copies from $K_4$.
The resulting graphs are called a class of $n$-dimensional $K_4$-hypercubes including a well-known interconnection network $(n,2)$-enhanced hypercube $Q_{n,2}$.
Brudnak and Lipt{\'a}k showed that the class of generalized hypercubes own very good connectivity properties. They proved that the generalized $K_4$-hypercubes are all maximally connected, and even for linearly many vertices are deleted, the remaining graph has a large connected component and  only a small number vertices in other components.
However, the research on the reliability evaluation of
generalized $K_4$-hypercubes under link fault patterns, as far as we know, has just begun.
For convenience, we rewrite the original notation of  the class of $n$-dimensional $K_4$-hypercubes  defined by Brudnak and Lipt{\'a}k.
An $n$-dimensional $K_4$-hypercube is denoted by $H_n^4$ and the family of $n$-dimensional $K_4$-hypercubes are denoted by $\mathscr{H}_n^4$. There is  a  $3$-dimensional $K_4$-hypercube $H_3^4$  and a $4$-dimensional $K_4$-hypercube $H_4^4$  illustrated in Fig.~\ref{The graphs of $H_3^4$ and $H_4^4$.}. And the perfect matching is represented by red lines.
In comparison to $Q_n$, the $n$-dimensional $K_4$-hypercubes exhibit many excellent measurements, including diameter, connectivity, and fault tolerance. With such remarkable properties, it is worthwhile to study the $n$-dimensional $K_4$-hypercubes.

The edge isoperimetric problem on a graph $G$ was proposed by Harper \cite{harper1964optimal} in 1964   to find a vertex subset in graph $G$ consisting  $m$ vertices, which $1\leq m<|V(G)|$, and satisfying the minimum number of edges that have to be deleted to separate this subset from graph $G$ among all  subsets of $m$ vertices separated from graph $G$. The minimum number is denoted as $\xi_m^e(G)$. Furthermore, it is closely related to the issue of reliability in parallel and distributed networks.
Denote by $G[X]$  the induced subgraph of the vertex set $X\subseteq V(G)$, and by
$[X,~\overline{X}]$  the set of edges of $G$, for every edge has one end vertex in the subset $X$ and the other end vertex in  the complement $\overline{X}=V (G)\backslash X$.
So we can express the explicit representation of the optimal solution of the edge isoperimetric problem: $\xi_m^e(G)=\min \{|[X, ~\overline{X}]|:|X|=m, 1\leq\ m\leq |V(G)|\}$.
Let
$
\xi_m(G)=\min\left\{|[X, ~\overline{X}]|:|X|=m \leq\lfloor|V(G)| / 2\rfloor,\right.\\
\left.G[X] \text { and  } G[\overline{X}]
\text{are both connected} \right\}.
$
For a $d$-regular graph $G$, $\xi_m(G)=dm-ex_m(G)$,
in which $e x_m(G)$ is  the maximum sum of degree among all subgraphs induced by $m$ vertices of $G$ for each $m \leq\lfloor|V(G)| / 2\rfloor$. Actually, if we can find $X_m^* \subseteq V(G),\left|X_m^*\right|=m$, with $e x_m(G)=2\left|E\left(G\left[X_m^*\right]\right)\right|$, and so that, both $G\left[X_m^*\right]$ and $G\left[\overline{X_m^*}\right]$ are connected. Thus,
$
\xi_m(G)=\left|\left[X_m^*, \overline{X_m^*}\right]\right|=d m-e x_m(G)=d m-2 |E\left(G\left[X_m^*\right]\right)|$.
So sometimes the problem of finding the maximum number of edges in a subgraph $G[X]$ induced from a graph $G$ with $m$ vertices is referred to as the edge isoperimetric problem \cite{bezrukov2018new}.
It is considered $\lambda_h$-optimal if $\lambda_h(G)=\xi_h(G)$; otherwise, it is not $\lambda_h$-optimal.
In general, for any $1\leq m \leq \lfloor|V(G)| / 2\rfloor$, $\xi_m(G)$ and  $\xi_m^e(G)$ are not necessarily equal pairwise. However, for the $n$-dimensional $K_4$ hypercubes, $\xi_m(G)=\xi_m^e(G)$ holds. In the following, we will use $\xi_m(G)$ to represent them uniformly.

The main contribution of this paper is that we study the reliability of the family of generalized $K_4$-hypercubes based on five link fault patterns.
Let  $\gamma=0$ for even $n$   and $\gamma=1$ for odd  $n$. It will be applied throughout the entire paper and will not be reiterated afterward.
In this paper, we obtain  the explicit representation of the optimal solution of the edge isoperimetric problem for our networks, which offers an upper bound of the $h$-extra edge-connectivity of an $n$-dimensional generalized $K_4$-hypercube $H_n^4$. As an application, we analyze the reliability of  $n$-dimensional $K_4$-hypercubes based on five link fault patterns.
Let $n,\,h,\,l$ be integers with $n\geq3$.
We derive the values of $\lambda_h(H_n^4)$ for  $1\leq h\leq 2^{\lceil n/2  \rceil }$.
Moreover, we give the exact values   of $\lambda_h(H_n^4)$  for  some exponentially large enough $h$ with the lower and upper bounds of each integer $h$ in $\lambda_h(H_n^4)$ are sharp.
Furthermore, we draw a conclusion that $\lambda(\mathscr{P}_i^l,~H_n^4)$ share a common value for $2\leq l\leq n-1$ and $ 1\leq i\leq4$. In addition, we derive the cyclic edge-connectivity $\lambda(\mathscr{P}_5^l,~H_n^4)=4n-8$ if $n=3,\,4$ and $\lambda(\mathscr{P}_5^l,~H_n^4)=3n-3$ if $n\geq5$.

The rest of this paper is organized as follows.
In section \ref{sec2}, the related definitions and the basic structural properties of  the $n$-dimensional $K_4$-hypercubes are introduced.
In section \ref{sec3}, the structure of the densest edges of the subgraph induced by $H_n^4$ is given.
In section \ref{sec4}, some properties  of
 the optimal solution of the edge isoperimetric problem
 are demonstrated.
In section \ref{sec5}, by using the expression of
 the optimal solution of the edge isoperimetric problem,
  the main three results  of $n$-dimensional $K_4$-hypercubes under five link fault patterns are proven.
In section \ref{sec6}, the conclusion is  provided.
\begin{thm}\label{thm1}
Let $n$ and $h$ be integers, with $n\geq3$ and $1\leq h \leq 2^{\lceil n/2  \rceil }-2-\gamma$.
Then
$$\lambda_{h}(H_{n}^{4})=\xi_{h}(H_{n}^{4}).$$
\end{thm}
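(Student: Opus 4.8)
The plan is to sandwich $\lambda_{h}(H_{n}^{4})$ between $\xi_{h}(H_{n}^{4})$ from above and $\min\{\xi_{m}(H_{n}^{4}):h\le m\le 2^{n-1}\}$ from below, and then, using the explicit formula for $\xi_{m}(H_{n}^{4})$ established in Sections~\ref{sec3}--\ref{sec4}, to show that for the stated range of $h$ this minimum is attained at $m=h$. The upper bound is direct: let $X^{\ast}$ be an optimal set with $|X^{\ast}|=h$ realizing $\xi_{h}(H_{n}^{4})$, so that (by the edge-isoperimetric analysis) both $H_{n}^{4}[X^{\ast}]$ and $H_{n}^{4}[\overline{X^{\ast}}]$ are connected; since $n\ge3$ gives $\lceil n/2\rceil\le n-1$ and hence $h\le 2^{\lceil n/2\rceil}-2-\gamma<2^{n-1}$, we have $|\overline{X^{\ast}}|=2^{n}-h>h$, so $[X^{\ast},\overline{X^{\ast}}]$ is an $h$-extra edge cut and $\lambda_{h}(H_{n}^{4})\le\xi_{h}(H_{n}^{4})$.

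For the lower bound, let $F$ be a minimum $h$-extra edge cut and let $C_{1},\dots,C_{k}$ ($k\ge2$) be the components of $H_{n}^{4}-F$, each with at least $h$ vertices. For any one $C_{i}$, the edge set $[V(C_{i}),\overline{V(C_{i})}]\subseteq F$ is again an $h$-extra edge cut whose removal leaves exactly $C_{1},\dots,C_{k}$; hence minimality of $F$ forces $F=[V(C_{i}),\overline{V(C_{i})}]$, so $k=2$. Indexing so that $m:=|V(C_{1})|\le 2^{n-1}$ and putting $X=V(C_{1})$, both $H_{n}^{4}[X]$ and $H_{n}^{4}[\overline{X}]$ are connected with $h\le m\le 2^{n-1}$, so $|F|=|[X,\overline{X}]|\ge\xi_{m}(H_{n}^{4})\ge\min\{\xi_{j}(H_{n}^{4}):h\le j\le 2^{n-1}\}$. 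Combining the two bounds, it suffices to prove
\[
\xi_{m}(H_{n}^{4})\ \ge\ \xi_{h}(H_{n}^{4})\qquad\text{for all }\ h\le m\le 2^{n-1}.
\]

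To this end I would apply the explicit piecewise formula for $\xi_{m}(H_{n}^{4})$ from Section~\ref{sec4}, whose form changes at the critical sizes $2^{\lceil n/2\rceil+t}$, splitting the range of $m$ at $h_{0}:=2^{\lceil n/2\rceil}-2-\gamma$. On the initial interval $h\le m\le h_{0}$ the formula shows $\xi_{m}(H_{n}^{4})$ is non-decreasing, so $\xi_{m}(H_{n}^{4})\ge\xi_{h}(H_{n}^{4})$ there and in particular $\xi_{h}(H_{n}^{4})\le\xi_{h_{0}}(H_{n}^{4})=\lfloor n/2\rfloor\,2^{\lceil n/2\rceil}$. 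On the remaining interval $h_{0}\le m\le 2^{n-1}$ one checks, block by block, that $\xi_{m}(H_{n}^{4})$ never falls below the value $\xi_{2^{\lceil n/2\rceil+t}}(H_{n}^{4})=(\lfloor n/2\rfloor-t)\,2^{\lceil n/2\rceil+t}$ attached to the block containing $m$, and that the sequence $\bigl((\lfloor n/2\rfloor-t)\,2^{\lceil n/2\rceil+t}\bigr)_{t\ge0}$ is non-decreasing; hence $\xi_{m}(H_{n}^{4})\ge\lfloor n/2\rfloor\,2^{\lceil n/2\rceil}=\xi_{h_{0}}(H_{n}^{4})\ge\xi_{h}(H_{n}^{4})$ throughout this interval. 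Together the two cases give the displayed inequality, and therefore $\lambda_{h}(H_{n}^{4})=\xi_{h}(H_{n}^{4})$.

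The main obstacle is the block-by-block control of the piecewise formula over the entire exponentially long interval $[h,2^{n-1}]$ --- especially proving both the within-block lower bound $\xi_{m}\ge(\lfloor n/2\rfloor-t)\,2^{\lceil n/2\rceil+t}$ and the monotonicity of the block values --- which rests on the densest-subgraph structure of Section~\ref{sec3}. It is also exactly here that the hypothesis on $h$ is sharp: for $h=2^{\lceil n/2\rceil}-1-\gamma$ the formula gives $\xi_{h}(H_{n}^{4})>\lfloor n/2\rfloor\,2^{\lceil n/2\rceil}=\xi_{2^{\lceil n/2\rceil}}(H_{n}^{4})$, so the minimum of $\xi_{m}$ over $[h,2^{n-1}]$ is no longer attained at $m=h$ and the identity $\lambda_{h}=\xi_{h}$ genuinely fails; thus the argument cannot be pushed past $h_{0}$, and the proof must pin down the behaviour of $\xi_{m}$ near the first critical size $2^{\lceil n/2\rceil}$ with full precision rather than by crude monotonicity.
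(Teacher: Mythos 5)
Your proposal is correct and follows essentially the same route as the paper: the sandwich $\lambda_h(H_n^4)=\min\{\xi_m(H_n^4):h\le m\le 2^{n-1}\}$ (their Formula~(2) together with the two-component reduction of Lemma~2.4), monotonicity of $\xi_m$ on $[1,2^{\lceil n/2\rceil}-1]$ (Lemma~4.2(1)), the dyadic lower bound $\xi_m\ge\xi_{2^{\lceil n/2\rceil}}$ for $m\ge 2^{\lceil n/2\rceil}$ (Lemma~4.4), and the equality $\xi_{2^{\lceil n/2\rceil}-2-\gamma}=\xi_{2^{\lceil n/2\rceil}}$ with strict excess in between (Lemma~4.2(2)), which is exactly the point you flag as needing full precision near the first critical size. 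Your block-by-block control of the tail via the values $(\lfloor n/2\rfloor-t)2^{\lceil n/2\rceil+t}$ is a mild elaboration of what the paper packages into the single Lemma~4.4, but the underlying argument is the same.
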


\begin{thm}\label{thm2}
For integer $n\geq4  $ and any  integer $h$ in the range $2^{\lceil n/2\rceil+t}-g_t \le h\le2^{\lceil n/2\rceil+t}$ with $t=0,1, \cdots,\lfloor n / 2\rfloor -1$, $g_t=\lceil\left(2^{2 t+2+\gamma} \right)/ 3\rceil$,
then
$$\lambda_{h}(H_{n}^{4})=\xi_{2^{\lceil n/2\rceil+t}}(H_{n}^{4})=(\lfloor n/2\rfloor-t)2^{\lceil n/2\rceil+t}.$$
\end{thm}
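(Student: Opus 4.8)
The plan is to determine $\lambda_h(H_n^4)$ by trapping it between $\xi_{2^{\lceil n/2\rceil+t}}(H_n^4)$ on both sides, using the explicit description of the optimal solution of the edge isoperimetric problem from Sections~\ref{sec3}--\ref{sec4} together with the elementary fact that a minimum-order component of $H_n^4-F$, where $F$ is an $h$-extra edge cut, has order at least $h$ and at most $2^{n-1}$.

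\emph{Upper bound.} Put $d=\lceil n/2\rceil+t$; since $0\le t\le\lfloor n/2\rfloor-1$ and $n\ge4$ we have $2\le d\le n-1$. I would choose $X^{*}$ to be the union of $2^{d-2}$ of the $K_4$-blocks of $H_n^4$ forming a densest induced subgraph of order $2^{d}$; by the structural analysis of Sections~\ref{sec3}--\ref{sec4} such an $X^{*}$ can be taken with both $H_n^4[X^{*}]$ and $H_n^4[\overline{X^{*}}]$ connected and $2|E(H_n^4[X^{*}])|=ex_{2^{d}}(H_n^4)$, so that, $H_n^4$ being $(n+1)$-regular,
$$|[X^{*},\overline{X^{*}}]|=(n+1)2^{d}-ex_{2^{d}}(H_n^4)=\xi_{2^{d}}(H_n^4)=(n-d)2^{d}.$$
As $h\le 2^{d}=|X^{*}|$ and $|\overline{X^{*}}|=2^{n}-2^{d}\ge 2^{n-1}\ge h$, the edge set $[X^{*},\overline{X^{*}}]$ is an $h$-extra edge cut, and therefore $\lambda_h(H_n^4)\le(n-d)2^{d}=(\lfloor n/2\rfloor-t)2^{\lceil n/2\rceil+t}$ (in particular $\lambda_h(H_n^4)$ exists).

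\emph{Lower bound.} Let $F$ be a minimum $h$-extra edge cut and let $C$ be a component of $H_n^4-F$ of smallest order; then $h\le|C|\le 2^{n-1}$ and $[C,\overline{C}]\subseteq F$, so, since $\xi_m(H_n^4)=\xi_m^{e}(H_n^4)$,
$$\lambda_h(H_n^4)=|F|\ \ge\ |[C,\overline{C}]|\ \ge\ \xi_{|C|}(H_n^4)\ \ge\ \min_{h\le m\le 2^{n-1}}\xi_m(H_n^4).$$
It thus remains to prove that $\xi_m(H_n^4)\ge(\lfloor n/2\rfloor-t)2^{\lceil n/2\rceil+t}$ for every integer $m$ with $2^{\lceil n/2\rceil+t}-g_t\le m\le 2^{n-1}$, with equality at $m=2^{\lceil n/2\rceil+t}$. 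I would split this into: (i) the values $\xi_{2^{\lceil n/2\rceil+s}}(H_n^4)=(\lfloor n/2\rfloor-s)2^{\lceil n/2\rceil+s}$ are non-decreasing in $s$ for $t\le s\le\lfloor n/2\rfloor-1$ (a one-line check using $\lfloor n/2\rfloor\ge2$); (ii) on each descending window $[\,2^{\lceil n/2\rceil+s}-g_s,\ 2^{\lceil n/2\rceil+s}\,]$ the explicit formula for $\xi_m$ from Section~\ref{sec4} stays $\ge(\lfloor n/2\rfloor-s)2^{\lceil n/2\rceil+s}$ --- this is the estimate that fixes the choice $g_s=\lceil 2^{2s+2+\gamma}/3\rceil$; and (iii) on the remaining (ascending) parts $\xi_m$ is at least its value at the critical cardinality immediately to the left of $m$. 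Combining (i)--(iii) keeps $\xi_m\ge(\lfloor n/2\rfloor-t)2^{\lceil n/2\rceil+t}$ over the whole interval, and with the upper bound this yields $\lambda_h(H_n^4)=\xi_{2^{\lceil n/2\rceil+t}}(H_n^4)=(\lfloor n/2\rfloor-t)2^{\lceil n/2\rceil+t}$. Finally, for the sharpness of the range of $h$, I would check $\xi_{2^{\lceil n/2\rceil+t}-g_t-1}(H_n^4)<(\lfloor n/2\rfloor-t)2^{\lceil n/2\rceil+t}<\xi_{2^{\lceil n/2\rceil+t}+1}(H_n^4)$, so that neither endpoint of $[\,2^{\lceil n/2\rceil+t}-g_t,\ 2^{\lceil n/2\rceil+t}\,]$ can be enlarged while the value stays equal to $(\lfloor n/2\rfloor-t)2^{\lceil n/2\rceil+t}$.

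\emph{Main obstacle.} The hard part is (ii)--(iii): extracting from the piecewise, recursively built formula for $\xi_m(H_n^4)$ the exact behaviour of $\xi_m$ just below and just above the critical cardinalities $2^{\lceil n/2\rceil+t}$, and in particular showing that the half-width of the flat window is precisely $g_t=\lceil 2^{2t+2+\gamma}/3\rceil$. The factor $1/3$ is the delicate point: because a densest induced subgraph is assembled from $K_4$-blocks, deleting the last few vertices changes $ex_m(H_n^4)$ by a telescoping sum of the form $\sum 4^{i}$, and $\sum_{i=0}^{k}4^{i}=(4^{k+1}-1)/3$; pinning down when the accumulated deficit first pushes $\xi_m$ strictly below $(\lfloor n/2\rfloor-t)2^{\lceil n/2\rceil+t}$ --- done separately for even $n$ $(\gamma=0)$ and odd $n$ $(\gamma=1)$ --- is where the careful bookkeeping lies.
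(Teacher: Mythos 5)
Your proposal is correct and follows essentially the same route as the paper: the identity $\lambda_h(H_n^4)=\min\{\xi_m(H_n^4):h\le m\le 2^{n-1}\}$, the upper bound from the explicit extremal set $L_{2^{\lceil n/2\rceil+t}}^n$ inducing an $(\lceil n/2\rceil+t+1)$-regular sub-$H^4$, and the lower bound from exactly the two facts you label (i)+(iii) and (ii), which are the paper's Lemmas~\ref{lem4.4} and~\ref{lem4.5}. Your diagnosis of the crux is also accurate --- the half-width $g_t$ comes from the geometric sum $2^{\gamma}\sum_{i=0}^{t}4^{i}=2^{\gamma}(4^{t+1}-1)/3$ accumulated by peeling off the subcubes $2^{2t-2i+\gamma}$, computed separately for even and odd $n$, which is precisely the bookkeeping carried out in the proof of Lemma~\ref{lem4.5}.
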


\begin{thm}\label{thm3}
Let $n,\,l,\,i$ be three positive integers  satisfying $n\geq3$, $2\leq l\leq n-1$, and $ 1\leq i\leq5$. Then\\
$(1).$ $\lambda(\mathscr{P}_i^l,~H_n^4)=(n-l)2^l$
for $1\leq i\leq4;$
\\
$(2).~$ $\lambda(\mathscr{P}_5^l,~H_n^4)=
4n-8$ if $n=3,~4;$ $\lambda(\mathscr{P}_5^l,~H_n^4)=3n-3$ if $n\geq 5$.
\end{thm}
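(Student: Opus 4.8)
The plan is to treat both items by the same template: bound each connectivity from above by the boundary of a concrete small subgraph of $H_n^4$, and from below by an edge-isoperimetric estimate, using the results of Sections \ref{sec3}--\ref{sec5}. For item (1), I would first settle $\lambda_{2^l}(H_n^4)=\lambda(\mathscr{P}_3^l,H_n^4)$: for $2\le l\le n-1$ the integer $h=2^l$ lies in an already-handled range --- either $2^l\le 2^{\lceil n/2\rceil}-2-\gamma$, so Theorem \ref{thm1} gives $\lambda_{2^l}(H_n^4)=\xi_{2^l}(H_n^4)$, or $2^l=2^{\lceil n/2\rceil+t}$ for some $0\le t\le\lfloor n/2\rfloor-1$, so Theorem \ref{thm2} applies --- and since the densest induced subgraph of $H_n^4$ on $2^l$ vertices is a copy of $H_l^4$ (Section \ref{sec3}), one gets $\lambda_{2^l}(H_n^4)=\xi_{2^l}(H_n^4)=(n+1)2^l-2|E(H_l^4)|=(n+1)2^l-(l+1)2^l=(n-l)2^l$. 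The handful of indices not literally covered by Theorems \ref{thm1}--\ref{thm2} (notably $n=3$) I would check by hand on the small graphs.

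For the other three parameters of item (1), one structure gives the common upper bound: a copy $W$ of $H_l^4$ in $H_n^4$ is an intact $l$-dimensional subnetwork, is $(l+1)$-regular --- hence of minimum and of average degree at least $l$ --- and has $2^l$ vertices, while $H_n^4-V(W)$ stays connected by the Brudnak--Lipt\'{a}k connectivity of $H_n^4$; so deleting the $(n-l)2^l$ boundary edges of $W$ is a legal $\mathscr{P}_i^l$-cut for $i\in\{1,2,4\}$. For the matching lower bounds, the implications that minimum degree at least $l$ forces average degree at least $l$, and that a vertex lying in an intact copy of $H_l^4$ has at least $l$ neighbours there (as $H_l^4$ is $(l+1)$-regular), show that every $\eta_l$-cut is a $\lambda^l$-cut and every $\lambda^l$-cut an $\overline{\lambda^l}$-cut, so
$$\overline{\lambda^l}(H_n^4)\le\lambda^l(H_n^4)\le\eta_l(H_n^4)\le(n-l)2^l .$$
It then suffices to prove $\overline{\lambda^l}(H_n^4)\ge(n-l)2^l$: for an optimal $\overline{\lambda^l}$-cut $F$ and a smallest component $X$ of $H_n^4-F$ one has $|F|\ge|[X,\overline{X}]|=(n+1)|X|-2|E(H_n^4[X])|\ge(n+1)|X|-ex_{|X|}(H_n^4)$, together with $2|E(H_n^4[X])|\ge l|X|$ from the average-degree condition; feeding the explicit shape of $ex_m(H_n^4)$ as a function of $m$ from Sections \ref{sec3}--\ref{sec4} into these two inequalities forces $|[X,\overline{X}]|\ge(n-l)2^l$, with equality only when $|X|=2^l$ and $H_n^4[X]\cong H_l^4$. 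Hence all four quantities equal $(n-l)2^l$.

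For item (2), the upper bound on $\lambda_c(H_n^4)$ is exhibited by a small cyclic subgraph whose complement still contains a cycle: for $n\ge5$, a triangle of $H_n^4$ (which exists because $H_2^4=K_4\subseteq H_n^4$) has boundary $3(n+1)-6=3n-3$ and connected complement carrying a cycle; for $n=3,4$, a copy of $H_2^4=K_4$ has boundary $4(n+1)-12=4n-8$ with a cyclic complement. For the lower bound, let $F$ be any cyclic edge-cut and $C$ a smallest component of $H_n^4-F$ containing a cycle, so $3\le|C|\le 2^{n-1}$ and $|F|\ge(n+1)|C|-2|E(H_n^4[C])|$. The cases $|C|=3$ and $|C|=4$ are settled directly ($H_n^4[C]$ is then a subgraph of $K_3$, respectively of $K_4$), and for $|C|\ge5$ one bounds $|E(H_n^4[C])|$ from above using that $H_n^4$ contains no $K_5$ and that every vertex outside a fixed copy of $K_4$ has at most one neighbour in it, so that $(n+1)|C|-2|E(H_n^4[C])|>\min\{3n-3,4n-8\}$ there. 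Minimizing over the admissible sizes gives $\lambda_c(H_n^4)=\min\{3n-3,4n-8\}$, which is $4n-8$ for $n=3,4$ and $3n-3$ for $n\ge5$.

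The main obstacle in both items is the lower bound: converting a local hypothesis on a component (average degree at least $l$, or containing a cycle) into a global isoperimetric lower bound on $|[X,\overline{X}]|$. This demands tight control of $ex_m(H_n^4)$ for the relevant small and moderate $m$ --- exactly the content of Sections \ref{sec3}--\ref{sec4} --- together with the recursive ``two copies plus a perfect matching from $K_4$'' description of $H_n^4$ and the Brudnak--Lipt\'{a}k connectivity bounds, which keep the complement of the extremal component connected and property-preserving.
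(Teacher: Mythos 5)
Your overall architecture coincides with the paper's: the upper bound for item (1) comes from the boundary of an embedded copy of $H_l^4$ (the paper's Lemma \ref{lem6.1}), the lower bound converts the degree hypothesis on a component into a lower bound on its number of vertices and then invokes the monotonicity $\xi_m(H_n^4)\ge\xi_{2^c}(H_n^4)$ (Lemmas \ref{lem6.2}, \ref{lem6.3} and \ref{lem4.4}), and your item (2) is exactly the paper's comparison of $\xi_3(H_n^4)=3n-3$ against $\xi_4(H_n^4)=4n-8$ with a triangle and a $K_4$ as witnesses. Your reduction of $\overline{\lambda^l},\lambda^l,\eta_l$ to a single lower bound via the implication chain, and your separate treatment of $\lambda_{2^l}$ through Theorems \ref{thm1} and \ref{thm2}, are harmless reorganizations of the same argument.

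However, the decisive step of your item (1) lower bound fails as written. From the average-degree hypothesis you take $2|E(H_n^4[X])|\ge l|X|$ and claim that, together with $2|E(H_n^4[X])|\le ex_{|X|}(H_n^4)$, this forces $|[X,\overline{X}]|\ge(n-l)2^l$ with equality only at $|X|=2^l$. It does not. For $3\le l\le n-2$ the set $X=L_{2^{l-1}}^n$ induces a copy of $H_{l-1}^4$, which is $l$-regular, so $ex_{2^{l-1}}(H_n^4)=l\cdot 2^{l-1}$ and both of your inequalities are satisfied with $|X|=2^{l-1}$; the pair of constraints therefore only yields $|X|\ge 2^{l-1}$ and hence $|[X,\overline{X}]|\ge\xi_{2^{l-1}}(H_n^4)=(n+1-l)2^{l-1}$, which is strictly smaller than $(n-l)2^l$ whenever $l\le n-2$ (for $l=2$ the same problem appears with a triangle, whose average degree is $2$ and whose boundary is $3n-3<4n-8$ once $n\ge 6$). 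The paper's Lemma \ref{lem6.3} avoids this by working with the threshold $l+1$ rather than $l$: the smaller component is required to have minimum, hence average, degree at least $l+1$ --- consistent with the witness $H_l^4$ being $(l+1)$-regular inside the $(n+1)$-regular graph $H_n^4$ --- and Lemma \ref{lem6.2} then gives $|X|\ge 2^{l}$, whence $\xi_{|X|}(H_n^4)\ge\xi_{2^l}(H_n^4)=(n-l)2^l$. You need the same $+1$ shift in the degree normalization throughout item (1); with the threshold $l$ your chain of inequalities proves only $(n+1-l)2^{l-1}$, not the stated value.
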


\section{Preliminaries}\label{sec2}

\par
In this paper, we refer to \cite{descartes1976ja} for concepts not mentioned.
An interconnection network can be topologically depicted as a graph $G=(V(G),\,E(G))$, in which $V(G)$ represents the set of vertices and $E(G)$ represents the set of edges.
The cardinalities of the vertex and edge sets are denoted by $|V(G)|$ and $|E(G)|$, respectively.

Firstly, the definitions of the $n$-dimensional hypercube, the $(n,k)$-enhanced hypercube, and the generalized $K_4$-hypercubes are stated as follows.

\begin{defi}\label{defi1}\textup{\cite{sullivan1977large}} Let $n$ be  an integer, with $n \geq 1$.
The $n$-dimensional hypercube is denoted by $Q_n$ in which the vertex set is the set of all $n$-bit binary strings and is represented by $V\left(Q_n\right)=\left\{y_n y_{n-1} \ldots y_1: y_i \in\{0,1\}, 1 \leq\right.$ $i \leq n\}$ .
Two vertices $x=x_n x_{n-1} \cdots x_2 x_1$ and $y=y_n y_{n-1} \cdots y_2 y_1$ of $Q_n$ are adjacent if and only if they differ in exactly one position.
\end{defi}
For any vertices $u=u_n u_{n-1} \cdots u_2 u_1$ and $v=v_n v_{n-1} \cdots v_2 v_1$, the edge $uv$ is called $k$-complementary edges $(1 \leq k \leq n-1)$ if and only if $v_i=u_i$ for $n-k+1<i \leq n$, and $v_j=\overline{u_j}$ for $1 \leq j \leq n-k+1$.
\begin{defi}\textup{\cite{tzeng1991enhanced}}
Assume that  $k$ and $n$ are two positive integers in which $1\leq k\leq n-1$. The $(n, k)$-enhanced hypercube  $Q_{n,k}$  is a supergraph of $Q_n$ by adding all $k$-complementary edges: $E(Q_{n,k})=E(Q_n)\bigcup\big\{(y_ny_{n-1}\cdots y_{n-k+2} y_{n-k+1}y_{n-k}\cdots y_2y_1,\,y_n y_{n-1}$
$\cdots y_{n-k+2}\overline{y_{n-k+1}}~\overline{y_{n-k}}\cdots \overline{y_2}~ \overline{y_1}): y_j = 0$ or $1,\,1\leq j \leq n\big\}$.
\label{defiQn,k}
\end{defi}

\begin{defi}\label{defi2}\textup{\cite{brudnak2021connectivity}}
The family of $n$-dimensional $K_4$-hypercubes, denoted by $\mathscr{H}_n^4$, is defined  for $n\geq2$ recursively: the only $2$-dimensional $K_4$-hypercube is $K_4$, i.e, $\mathscr{H}_2^4$ contains only $\{K_4\}$. And if $n\geq3$, the graphs in $\mathscr{H}_n^4$ are exactly the graphs, up to isomorphism, that can be  constructed by taking two copies $($ which may not isomorphic$)$ of graphs from $\mathscr{H}_{n-1}^4$ and adding a perfect matching, arbitrarily, between them.
\end{defi}
For $k=n-1$, the $(n,n-1)$-enhanced hypercube $Q_{n,n-1}$ is obtained from $Q_n$ by adding $(n-1)$-complementary edges.
Note that $Q_{n,n-1}$ is contained in $\mathscr{H}_n^4$.

  For $n\!\!\!\!\geq\!\!\!\!2$, by the definition of generalized $K_4$-hypercubes, denote by $X_n X_{n-1}X_{n-2}\cdots X_{2}X_{1}\!=\{x_{n} x_{n-1} x_{n-2} \cdots x_2x_1|x_i\in \{0,1\}, 1\leq i\leq n\}$ the vertex set $V(H_n^4)$.
Let $V_0=0 X_{n-1} X_{n-2} \cdots X_2X_1$ and $V_1=1 X_{n-1} X_{n-2} \cdots X_2X_1$. So $H_n^4$ is obtained by arbitrarily adding a perfect matching between $V_0$ and $V_1$.
Furthermore, denote by  $y_{n}y_{n-1} \cdots y_{k+1}X_k X_{k-1} \cdots X_2 X_1$ the set of $y_{n}y_{n-1} \cdots y_{k+1}x_a \cdots x_2x_1: x_i \in \{0,1\},1\leq i\leq k$ with $y_j$ is fixed for every $a+1\leq j\leq n$.
One can see that  $H_n^4[y_{n}y_{n-1} \cdots y_{k+1}X_k  X_{k-1} \cdots X_2 X_1]$ is the subgraph, induced by $y_{n}y_{n-1} \cdots y_{k+1}X_k X_{k-1} \cdots X_2 X_1$.
Then  we can denote by $y_{n}y_{n-1} \cdots y_{k+1}X_k X_{k-1}\cdots X_2 X_1$ the $k$-dimensional sub-$H_n^4$ in an $n$-dimensional $K_4$-hypercube.
Hence, $\underbrace{00\cdots 0}_{n-t_0-1}X_{t_0+1} X_{t_0} \cdots X_2X_1$ can be represented as $\underbrace{00\cdots 00}_{n-t_0}X_{t_0}X_{t_0+1} X_{t_0} \cdots X_2X_1$ and $\underbrace{00\cdots 01}_{n-t_0}X_{t_0}X_{t_0+1} X_{t_0} \cdots X_2X_1$.

\begin{figure*}[tb]
\centering
\includegraphics[width=1\linewidth]{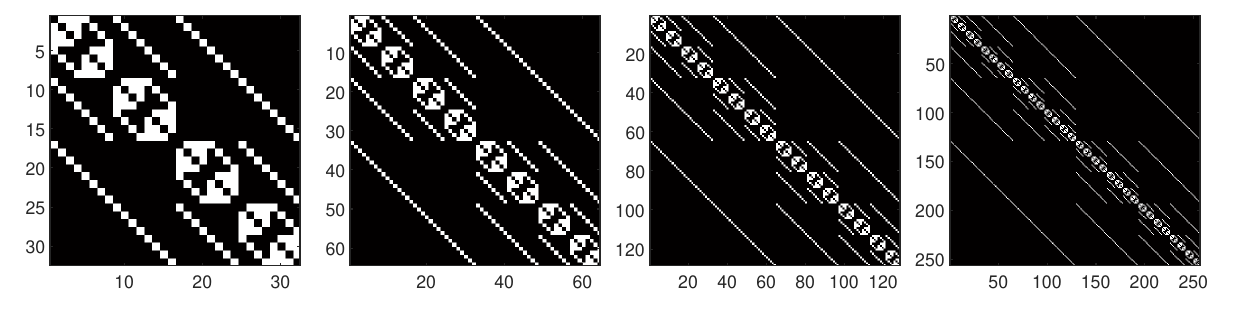}
\caption{Bitmaps of adjacency matrix of $H_n^4$ for $3\leq n\leq8$.}
\label{The bitmaps of adjacency matrix}
\end{figure*}
Harper \cite{harper1964optimal}, Li and Yang\cite{li2013bounding} independently obtained the exact value of $ex_m(Q_n)$.
\begin{lem}\label{lem2.3}\cite{harper1964optimal,li2013bounding}
Given an  integer  $m=\sum_{i=0}^{s}2^{t_i}\leq2^n$, the following results are valid:\\
$(1).$ $ex_m(Q_n)\!\!=2|E(Q_n[L_m^n])|\!\!=\sum_{i=0}^{s}t_i2^{t_i}+\sum_{i=0}^{s}2i2^{t_i}$
and $\xi_m(Q_n)=nm-ex_m(Q_n)$;\\
$(2).$ $ex_{m+1}(Q_n)-ex_m(Q_n)=2s+2;$\\
$(3).$ $ex_m(Q_n)=ex_{m-2^{t_0}}(Q_n)+ex_{2^{t_0}}(Q_n)+2(m-2^{t_0})$.
\end{lem}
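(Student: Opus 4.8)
The plan is to establish parts (3), (2), and (1) in that order, since each reduces to edge counting for a single explicit set: let $L_m^n$ denote the first $m$ vertices of $Q_n$ in binary order, i.e.\ the integers $0,1,\dots,m-1$ read as $n$-bit strings. To prove the recursion in (3), write $m=\sum_{i=0}^s 2^{t_i}$ with $t_0>\dots>t_s$ and split $L_m^n=A\cup B$ where $A=\{0,\dots,2^{t_0}-1\}$ and $B=\{2^{t_0},\dots,m-1\}$. Then $A$ spans a $t_0$-dimensional subcube, the map $v\mapsto v-2^{t_0}$ (which simply flips coordinate $t_0$, equal to $1$ throughout $B$) is an isomorphism of $Q_n[B]$ onto $Q_n[L_{m-2^{t_0}}^n]$, and each vertex of $B$ has its coordinate-$t_0$ neighbour inside $L_{m-2^{t_0}}^n\subseteq A$, and these are the only edges joining $A$ and $B$, so their number is $|B|=m-2^{t_0}$. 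Summing the internal edges of $A$, the internal edges of $B$, and the edges between them, then doubling, gives $ex_m(Q_n)=ex_{2^{t_0}}(Q_n)+ex_{m-2^{t_0}}(Q_n)+2(m-2^{t_0})$.

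For (2), adjoining the single vertex $m$ to $L_m^n$ creates one new edge for each coordinate whose bit in $m$ is $1$ (flipping such a bit lowers the value, landing back in $L_m^n$, while flipping a $0$-bit leaves $L_m^n$), so the gain is the Hamming weight $s+1$ of $m$ and $ex_{m+1}(Q_n)-ex_m(Q_n)=2s+2$. The closed form in (1) then follows by induction on $s$ from (3): the base case $ex_{2^{t_0}}(Q_n)=t_0 2^{t_0}$ records that a $t_0$-cube has $t_0 2^{t_0-1}$ edges, and in the inductive step the exponents $t_1>\dots>t_s$ of $m-2^{t_0}$ re-index as the $0$th through $(s-1)$th terms, so the shift inside $2(i-1)2^{t_i}$ together with the extra summand $2(m-2^{t_0})=2\sum_{i=1}^s 2^{t_i}$ combine to $\sum_{i=1}^s 2i\,2^{t_i}$, yielding $ex_m(Q_n)=\sum_{i=0}^s t_i 2^{t_i}+\sum_{i=0}^s 2i\,2^{t_i}$. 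Finally, $n$-regularity forces $|[X,\overline X]|=nm-2|E(Q_n[X])|$ for every $m$-set $X$, and since $L_m^n$ (and, when $m\le 2^{n-1}$, its complement) induces a connected subgraph, the set defining $\xi_m$ may be chosen connected with connected complement, whence $\xi_m(Q_n)=nm-ex_m(Q_n)$.

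The genuine difficulty, and the step I expect to be the main obstacle, is the optimality claim underlying (1): that $2|E(Q_n[L_m^n])|$ is the maximum of $2|E(Q_n[X])|$ over all $|X|=m$, which is Harper's edge-isoperimetric theorem. I would prove it by compression. For each coordinate $i$, the standard $i$-compression, which pushes the membership of $X$ toward the side where coordinate $i$ is $0$, does not decrease $|E(Q_n[X])|$; cycling $i$-compressions over all $i$ strictly decreases the sum of the integer values of the vertices of $X$ and so terminates at a set compressed in every direction, and one then argues by induction on $n$ --- writing $Q_n=Q_{n-1}\times K_2$ and comparing, over the admissible ways to split an $m$-set between the two halves, the resulting edge totals --- that among fully compressed $m$-sets the binary initial segment maximises the edge count. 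Pinning down this last induction, in particular the optimal split size, is the delicate part, and it is exactly there that the weighting $\sum 2i\,2^{t_i}$ appears. Since the statement is quoted verbatim from \cite{harper1964optimal,li2013bounding}, one may alternatively just invoke those references, but the compression argument is the natural self-contained route.
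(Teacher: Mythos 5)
The paper offers no proof of this lemma: it is quoted verbatim from the cited sources (Harper; Li and Yang), so there is nothing internal to compare against. Judged on its own terms, your proposal is sound where it is explicit. The decomposition $L_m^n=A\cup B$ with $A=\{0,\dots,2^{t_0}-1\}$ a $t_0$-subcube, $B$ isomorphic to $L_{m-2^{t_0}}^n$ via flipping coordinate $t_0$, and exactly $|B|=m-2^{t_0}$ cross edges is correct and gives (3) for the initial segment; counting the $s+1$ down-neighbours of the vertex $m$ gives (2); the induction recovering the closed form and the regularity identity $|[X,\overline X]|=nm-2|E(Q_n[X])|$ together with connectivity of initial and final segments give the $\xi_m$ formula. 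You also correctly isolate the one genuinely hard ingredient, namely that $L_m^n$ attains the maximum $ex_m(Q_n)$, without which your three computations describe only one candidate set rather than the extremal value.

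The caveat is that your compression sketch, as stated, does not close that gap: it is a well-known feature of the edge-isoperimetric problem on $Q_n$ that sets compressed in every coordinate direction need not be initial segments of the binary order, so the concluding step (``among fully compressed $m$-sets the binary initial segment maximises the edge count'') is precisely where the real work lies and cannot be waved through; Harper's original argument and its modern treatments handle this with a separate stabilisation or case analysis. You flag this honestly and offer the citation as a fallback, which is exactly what the paper itself does, so the proposal is acceptable as a justification of the lemma; just be aware that the self-contained route requires materially more than the paragraph you allot to it.
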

Note that $H_n^4$ can be split into two parts recursively and each has $2^{n-1}$ vertices with a perfect matching between the two parts. It
is an $(n+1)$-regular graph with $2^n$ vertices and $(n+1)2^{n-1}$ edges. So
\begin{equation}
\label{equa1}
\xi_m(H_n^4)=(n+1)m-ex_m(H_n^4).
\end{equation}
By the definition of the $h$-extra edge-connectivity of a graph $G$, for all $1 \leq h \leq2^{n-1}$,
 we have
\begin{equation}
\label{equa2}
\lambda_h(H_n^4)=\min \left\{\xi_m(H_n^4): h \leq m \leq 2^{n-1}\right\} .
\end{equation}

With the increase of $n$,
the scale of the $H_n^4$ grows exponentially, and its topological structure becomes more and more complex. Therefore, the bitmaps of the adjacency matrix of $H_n^4$ distinctly illustrate the adjacent relationship between its vertices. The bitmaps of the adjacency matrix of $H_n^4$ for $3\leq n\leq8$ are  presented in Fig.~\ref{The bitmaps of adjacency matrix}. In these bitmaps, a white pixel at position $(u, v)$ corresponds to an edge between vertex $u$ and vertex $v$.

Although these five types of edge-connectivities have different failure scenarios, they share a common characteristic: removing the minimum edge-cut results in two  components in the remaining graph.

\begin{lem}\label{lem2.4}
If $F$ is any minimum  $h$-extra edge-cut or $\mathscr{P}_i^l$ edge-cut of $H_n^4$ for $1\leq i\leq 5$ and $2\leq l\leq n-1$, then $H_n^4-F$  has exactly two components.
\end{lem}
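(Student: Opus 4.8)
\textbf{Proof proposal for Lemma~\ref{lem2.4}.}
The plan is to handle the five fault patterns simultaneously by exploiting that $H_n^4$ is built recursively from two $(n-1)$-dimensional $K_4$-hypercubes $V_0$ and $V_1$ joined by a perfect matching, and that each of the five properties $\mathscr{P}$ is inherited by subgraphs (more vertices, minimum degree $\geq l$, average degree $\geq l$, containing an intact $l$-subcube, containing a cycle). Suppose for contradiction that $F$ is a minimum $\mathscr{P}$-conditional edge-cut (for whichever of the five properties is under consideration) but $H_n^4-F$ has at least three components $C_1,C_2,\dots,C_k$ with $k\geq 3$, each satisfying $\mathscr{P}$. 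First I would establish a lower bound on $|F|$ coming from the fact that each $C_j$ must emit at least a certain number of edges to the rest of the graph: for the $h$-extra case each component has $\geq h$ vertices so $|[V(C_j),\overline{V(C_j)}]|\geq \xi_{|V(C_j)|}(H_n^4)\geq \lambda(\mathscr{P},H_n^4)$ by the edge-isoperimetric optimality, and since every edge of $F$ is counted at most twice, $|F|\geq \tfrac{k}{2}\lambda(\mathscr{P},H_n^4)\geq \tfrac32\lambda(\mathscr{P},H_n^4)$; for the other four patterns one uses the explicit minimum-degree / cycle bounds to get the analogous $|[V(C_j),\overline{V(C_j)}]|\geq \lambda(\mathscr{P}_i^l,H_n^4)$.

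Next I would construct an \emph{actual} $\mathscr{P}$-conditional edge-cut that is strictly smaller, yielding the contradiction. The idea is to regroup the components: choose a single component $C_1$ (say the one with fewest vertices, or one not containing a fixed intact subcube in the embedded case) and let $A=V(C_1)$, $B=V(H_n^4)\setminus A$. Then $F'=[A,\overline A]\subseteq F$ is itself a $\mathscr{P}$-conditional edge-cut provided $H_n^4[B]$ is connected and still satisfies $\mathscr{P}$. Connectivity of $H_n^4[B]$ needs an argument: I would use Lemma~\ref{lem2.3}-type density facts together with the recursive structure to show that $H_n^4$ is sufficiently edge-dense that deleting the small set $F'$ (with $|F'|<|F|$ strictly, since $F'$ misses the edges internal to $C_2\cup\cdots\cup C_k$ that lie in $F$) cannot disconnect the union $C_2\cup\cdots\cup C_k$; here the point is that all the mutual edges among $C_2,\dots,C_k$ that were in $F$ are \emph{not} in $F'$, so those components become reconnected through $B$. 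That $B$ still has property $\mathscr{P}$ is immediate for $h$-extra and the three degree/cycle patterns (a union of $\mathscr{P}$-components keeps the property), and for the embedded pattern one simply keeps $C_1$ to be a component \emph{not} designated to contain the witnessing intact $l$-cube. This gives $|F'|<|F|$, contradicting minimality of $F$.

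The main obstacle I anticipate is verifying that $H_n^4[B]$ (equivalently, that the merged union of the remaining $\geq 2$ components) is genuinely connected after removing only $F'$ — i.e.\ ruling out the possibility that some edges of $F$ essential for gluing $C_2,\dots,C_k$ together actually lie in $[A,\overline A]$. I would address this by a counting/degree argument: bound $|F'|=|[A,\overline A]|$ from above using that $C_1$ is the smallest component and $H_n^4$ is $(n+1)$-regular, compare against the vertex-connectivity-type guarantees of $H_n^4$ from Brudnak--Lipt\'ak (every small edge-cut of a generalized $K_4$-hypercube isolates essentially one side), and conclude that $[A,\overline A]$ is too small to be a cut separating the large remaining part into pieces. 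An alternative, cleaner route that avoids this subtlety is to pick $A$ to be the union of \emph{all but one} component, $A=V(C_1)\cup\cdots\cup V(C_{k-1})$ and $B=V(C_k)$; then $H_n^4[B]=C_k$ is connected and has $\mathscr{P}$ by hypothesis, while $H_n^4[A]$ contains the connected-through-$F$ reassembly of $C_1,\dots,C_{k-1}$ and $|[A,\overline A]|=|[V(C_k),\overline{V(C_k)}]|$. Either way, the heart of the argument is the strict inequality $|[A,\overline A]|<|F|$, which holds because $F$ contains at least one edge running between two components other than the one peeled off, and that edge is absent from $[A,\overline A]$ — establishing this last point rigorously (that such an edge exists when $k\geq 3$) is the crux, and it follows since with $k\geq3$ there are at least two components on the same side of the bipartition $A\mid B$ whose mutual $F$-edges, if any existed, would have to be internal to $A$; if no two such components share an $F$-edge they are only connected through $B=V(C_k)$, contradicting $C_k$ being a separate component, so such an edge must exist.
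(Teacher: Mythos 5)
Your overall plan (assume $k\ge 3$ components and exhibit a strictly smaller conditional edge-cut) is the same as the paper's, but the construction you hang it on has a genuine gap. Consider the quotient multigraph whose vertices are the components $C_1,\dots,C_k$ and whose edges are the edges of $F$ (for a minimum cut every edge of $F$ joins two distinct components). Nothing prevents this multigraph from being a star centred at the very component you peel off: e.g.\ every edge of $F$ could be incident to $C_1$ while $C_2,\dots,C_k$ are pairwise non-adjacent. In that case your primary choice $A=V(C_1)$ gives $[A,\overline A]=F$ (no strict decrease) and $H_n^4[B]=C_2\sqcup\cdots\sqcup C_k$ is disconnected, so $F'$ is not even an edge-cut of the required form; the symmetric failure hits your ``cleaner'' alternative when the star is centred at $C_k$. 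Your closing argument that such an edge between two non-peeled components ``must exist'' rests on the claim that $C_1,\dots,C_{k-1}$ being connected only through $V(C_k)$ would ``contradict $C_k$ being a separate component'' --- but there is no contradiction: in $H_n^4-F$ all edges of $F$ are deleted, so the other components are perfectly entitled to be attached (in $H_n^4$) only to the peeled one. The isoperimetric lower bound $|F|\ge\tfrac{k}{2}\lambda(\mathscr{P},H_n^4)$ in your first paragraph is never used and does not repair this.

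The repair is either to peel off a component that is \emph{not} a cut vertex of the (connected) quotient multigraph --- a leaf of a spanning tree always works and guarantees both the connectivity of the remainder and the strict inequality --- or, as the paper does, to run the argument in the opposite direction: since $H_n^4$ is connected and $p\ge 2$, there exist components $S_i,S_j$ with $[V(S_i),V(S_j)]\neq\emptyset$; set $F_1=F\setminus[V(S_i),V(S_j)]$. Then $|F_1|<|F|$ and $H_n^4-F_1$ has exactly $p-1\ge 2$ components, namely the old ones with $S_i$ and $S_j$ merged, and the merged component trivially inherits each of the five properties (more vertices, minimum/average degree, intact $l$-subcube, cycle). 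This contradicts minimality with no connectivity verification at all, which is exactly the subtlety your version gets stuck on.
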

\begin{proof}
We prove by contradiction.  Assume that $F$ is the minimum $h$-extra edge-cut or $\mathscr{P}_i^l$  edge-cut of $H_n^4$, such that $H_n^4-F$ has $p$ components $S_1, S_2, \cdots, S_p$ with at least $h$ vertices or satisfying $\mathscr{P}_i^l$,  $p \geq 3$. Because $H_n^4$ is connected, there must exist integers $i, j$ with $\left[V\left(S_i\right), V\left(S_j\right)\right] \neq \emptyset,\left|F_1\right|= \left|F \backslash\left[V\left(S_i\right), V\left(S_j\right)\right]\right|<\left|F\right|$. Thus, $F_1$ is also a minimum $h$-extra edge-cut of $H_n^4$, which contradicts the minimality of $F$. Thus, $H_n^4-F$ has exactly two components.
\end{proof}

\section{The structure of the densest edges of subgraph induced by $H_n^4$}
\label{sec3}
Moving forward, we  explore the expression of $ex_m(H_n^4)$.
We present the unique binary representation of $m$ and the construction of vertex set $L_m^n$ for the sake of the expression of $ex_m(H_n^4)$.
Consider an  integer $m$ with $1\leq m\leq2^{n}$. Assume that  $m=\sum_{i=0}^s2^{t_i}$ is the decomposition of $m$,  $t_0= \lfloor \log _2 m \rfloor $, $t_i=\lfloor \log_2(m-\sum_{r=0}^{i-1}2^{t_r})\rfloor$, $i=1,2,\cdots,s$, and $t_0>t_1>\cdots>t_s$.
Let $S_m=\{0,1,\cdots,m-1\}($under decimal representation$)$, $|S_m|=m$, $m\leq2^{n-1}$, and the corresponding set $L_m^n$ represented by $n-$binary strings.
 Denote by $L_m^n$  the subset of $V(H_n^4)$ and by $H_n^4[L_m^n]$  the subgraph induced by $L_m^n$ in $H_n^4$.
Both $H_n^4[L_m^n]$ and $H_n^4[\overline{L_m^n}]$ are connected.
Due to the fact that the spanning subgraph of a graph $H_n^4$ obtained by $(n-1)$-complementary edges deletions only  belongs to bijective connection networks $\mathscr{B}_n$ and both $B_n[L_m^n]$ and $B_n[\overline{L_m^n}]$ are connected, it follows that $H_n^4[L_m^n]$ and $H_n^4[\overline{L_m^n}]$ are also connected.

Given a positive integer $m$ with $1\leq m=4p+q=\sum_{i=0}^{s}2^{t_i}\leq2^n$, the function $f(m)$ is defined as:\\
\begin{math}
\begin{aligned}
f(m)
=\begin{cases}
\sum_{i=0}^s t_{i}2^{t_i}+\sum_{i=0}^s2 i \cdot 2^{t_i}+4p&\text{if }0\leq q\leq 2;\\
\sum_{i=0}^s t_{i}2^{t_i}+\sum_{i=0}^s2i \cdot 2^{t_i}+4p+2&\text{if }q=3
\end{cases}
\end{aligned}\end{math}\\
for $p=\lfloor m/4\rfloor$, $q=m-4\lfloor m/4\rfloor$.
In this section, we prove $ex_m(H_n^4)= f(m)$. The following lemma gives an upper bound for $ex_m(H_n^4)$.
\begin{lem}\label{lem3.1}
$ex_m(H_n^4)\leq f(m)$.
\end{lem}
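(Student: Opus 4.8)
The plan is to prove the upper bound $ex_m(H_n^4)\leq f(m)$ by induction on $n$, exploiting the recursive two-copy-plus-perfect-matching structure of $H_n^4$. Fix any graph $H_n^4\in\mathscr{H}_n^4$ and any $X\subseteq V(H_n^4)$ with $|X|=m$; we must show $2|E(H_n^4[X])|\leq f(m)$. Write $H_n^4=H'\cup H''\cup M$, where $H'=H_n^4[V_0]$ and $H''=H_n^4[V_1]$ are the two $(n-1)$-dimensional copies (each isomorphic to some member of $\mathscr{H}_{n-1}^4$) and $M$ is the perfect matching between them. Set $X'=X\cap V_0$, $X''=X\cap V_1$, and let $a=|X'|$, $b=|X''|$, so $a+b=m$; without loss of generality $a\geq b$. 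Then
\begin{equation}
\label{eq:split}
2|E(H_n^4[X])| = 2|E(H'[X'])| + 2|E(H''[X''])| + 2|M\cap[X,X]| \leq ex_a(H_{n-1}^4) + ex_b(H_{n-1}^4) + 2b,
\end{equation}
since $M$ contributes at most $\min\{a,b\}=b$ edges inside $X$. By the induction hypothesis, $ex_a(H_{n-1}^4)\leq f(a)$ and $ex_b(H_{n-1}^4)\leq f(b)$, so it suffices to prove the purely numerical inequality $f(a)+f(b)+2b\leq f(m)$ for all splits $a+b=m$ with $a\geq b\geq 0$. The base case $n=2$ is $K_4$, where one checks $ex_m(K_4)=f(m)$ directly for $m=1,2,3,4$.

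The numerical heart of the argument is: for $1\leq m\leq 2^n$ and any decomposition $m=a+b$ with $a\geq b\geq 0$,
\begin{equation}
\label{eq:numeric}
f(a)+f(b)+2b\leq f(m),
\end{equation}
with equality attained at the "canonical" split (roughly, $a=2^{t_0}$, $b=m-2^{t_0}$, so that $L_m^n$ fills one entire $(n-1)$-subcube of the largest possible dimension and the rest is packed greedily into the other copy). The right tool here is a difference/discrete-convexity analysis of $f$: from its definition one extracts a clean formula for the one-step increment $f(m+1)-f(m)$ in terms of the number of ones $s+1$ in the binary expansion of $m$ together with the residue $q=m\bmod 4$ — this parallels Lemma~\ref{lem2.3}$(2)$ for $Q_n$ but with the extra $+4p$ (or $+4p+2$) term contributing an additional unit of slope roughly every four steps, reflecting the extra matching dimension. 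Establishing \eqref{eq:numeric} then reduces to showing that moving one vertex from the smaller side $b$ to the larger side $a$ never decreases $f(a)+f(b)+2b$ (a "compression" step), which follows once we know $f(a+1)-f(a)\geq f(b)-f(b-1)+2$ whenever $a\geq b-1\geq 0$; this in turn is the statement that the marginal gain of adding a vertex to the larger, denser side plus the matching edge it brings dominates the loss on the smaller side. I would prove this increment inequality by a short case analysis on the residues $a\bmod 4$ and $b\bmod 4$ together with the monotonicity of the "number of ones plus $2\lfloor\,\cdot/2\rfloor$-type" term inherited from Lemma~\ref{lem2.3}.

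The main obstacle I anticipate is precisely the bookkeeping in \eqref{eq:numeric}: the function $f$ is only \emph{piecewise} of the hypercube form, with the correction term switching between $4p$ and $4p+2$ according to $q\bmod 4$, so the increment $f(m+1)-f(m)$ is not monotone in the naive sense and the compression inequality has genuine boundary cases (e.g.\ when pushing a vertex changes $t_0$, or when $b$ crosses a multiple of $4$). My strategy to control this is to first reduce to the case $b\leq a\leq 2b$ (otherwise repeatedly halve, using $f(2c)=$ a clean closed form and $f(2c)+2c\le f(\text{next power-ish block})$), and then to verify the handful of residue-pair cases using the explicit increment formula rather than the definition directly. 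Once \eqref{eq:numeric} is in hand, combining it with \eqref{eq:split} and the induction hypothesis closes the induction and yields $ex_m(H_n^4)\leq f(m)$.
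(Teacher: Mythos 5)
Your overall skeleton — induction on $n$, splitting $X$ into the two $(n-1)$-dimensional copies, bounding the matching contribution by $2\min\{a,b\}$, and reducing to a numerical inequality about $f$ — is the same as the paper's. The gap is in your proposed proof of the numerical inequality \eqref{eq:numeric}. You reduce it to the one-step exchange claim that moving a vertex from the smaller side to the larger side never decreases $f(a)+f(b)+2b$, i.e.\ that $f(a+1)-f(a)\geq f(b)-f(b-1)+2$ whenever $a\geq b-1\geq0$. This is false: taking $a=b=4$, one computes $f(3)=6$, $f(4)=12$, $f(5)=14$, so $f(5)-f(4)=2$ while $f(4)-f(3)+2=8$. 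Concretely, for $m=8$ the split values $f(a)+f(b)+2b$ are $32,26,24,26,32$ at $(a,b)=(8,0),(7,1),(6,2),(5,3),(4,4)$: the maximum $f(8)=32$ is attained at \emph{both} endpoints $(8,0)$ and $(4,4)$ (one full copy of $H_3^4$, versus two $K_4$'s joined by a matching), and the function dips in between. So a one-directional compression argument cannot work; the extremal splits are not obtained by pushing everything to one side, and your "marginal gain on the denser side dominates" heuristic breaks exactly at the balanced power-of-two splits where equality holds.

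The inequality $f(a)+f(b)+2b\leq f(a+b)$ itself does appear to be true, but establishing it needs a finer analysis than monotone exchange. The paper handles this by a case split: when $m=2^{t_0}$ it matches the terms of the binary expansions of $m_0$ and $m_1$ against a telescoped expansion of $f(2^{t_0})$ (using that the exponent multisets of $m_0$ and $m_1$ together tile $\{t_0-1,\dots\}$), and for general $m$ it avoids proving the full split inequality altogether, instead peeling off blocks of size $2^{t_{10}}$ from the larger side (or from both sides when $t_{10}=t_{00}$) and tracking the exact change in $f$ under each peeling operation. To repair your argument you would either need to prove \eqref{eq:numeric} by a case analysis on the binary expansions of $a$ and $b$ (not just their residues mod $4$), or adopt a recursive reduction of this peeling type; the residue-pair bookkeeping you propose is not sufficient because the failure above is driven by the change in the number of binary digits, not by the residues.
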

\begin{proof}
By induction on $n$.
Suppose that $X\subseteq V(H_n^4)$ and $ex_m(H_n^4)=2|E(H_n^4[X])|$.
Let $A_0$ and $A_1$ be induced subgraphs $H_n^4[V_0]$ and $H_n^4[V_1]$.
For $n =2,~3$, we can  derive the equation. Thus  suppose $n\geq4,\,m=4p+q,\,0\leq q<4$.
 Let $X_0$ and $X_1$ be denoted by $X\cap V_0$ and $X\cap  V_1$.
 Without loss of generality,  assume that $ m_i = |X_i|$, $i = 0,1$. Let $m_0 = \sum_{i=0}^{s_0}2^{t_{0i}}\leq   m_1=\sum_{i=0}^{s_1}  2^{t_{1i}}$.  If $m_0 =0$, by the induction hypothesis, then $ex_m(H_n^4)\leq f (m)$, and so assume $m_0>0$ from now on.
\par\textbf{Case 1.} Suppose $s = 0$.
 \par\textbf{Subcase 1.1.} If $t_{00} = t_0-1$,
checking $ex_m(H_n^4)\leq f(m)$
 for $t_0=1,\ 2$ can be conducted directly.
 Note that there are at most $m_0$ edges between $A_0\left[X_0\right]$ and $A_1\left[X_1\right]$. The representations $ex_{m_0}(H_n^4)$ and $ex_{m_1}(H_n^4)$ are restricted in $A_0$ and $A_1$.
 Omit  similar contexts  from this moment.
 Given $t_0\geq3$, all three variables $m$, $m_0$ and $m_1$ are divisible by $4$, resulting in $ex_m(H_n^4)=2|E(H_n^4[X])|\leq2|E(H_n^4[X_0])| + 2|E(H_n^4[X_1])| + 2m_0\leq ex_{m_0}(H_n^4)+ex_{m_1}(H_n^4) + 2m_0 \leq f(m_0) + f(m_1) + 2m_0 = (t_0-1)2^{t_0-1} +m_0+ (t_0-1)2^{t_0-1}+m_0+2m_0
 = t_0\cdot2^{t_0}+2^{t_0}= f (m)$.
 \par\textbf{Subcase 1.2.} If $t_{00}<t_0-1$.
Note that $m_0+m_1=2^{t_0}$.  It can be seen that $t_{0 s_0}=t_{1 s_1}$ and
\begin{equation}
\begin{split}
&\left\{t_{00}, t_{01}, \cdots, t_{0\left(s_0-1\right)}, t_{10}, \cdots, t_{1\left(s_1-1\right)}\right\}\\
&=\left\{t_0-1, t_0-2, \cdots, t_{0 s_0}+1\right\}.
\end{split}
\label{equa3}
\end{equation}
Actually, it can be derived recursively. Note that $t_{10}=t_0-1$,  $m_1>m_0$, then $m_1-2^{t_0-1}+m_0=2^{t_0-1}$. Let $m_1^{\prime}=m_1-2^{t_0-1}, m_0^{\prime}=m_0$. If $m_1^{\prime}>m_0^{\prime}$, then $t_{11}=t_0-2$; If $m_0^{\prime}>m_1^{\prime}$, then $t_{00}=t_0-2$; If $m_0^{\prime}=m_1^{\prime}$, then $t_{11}=t_{00}=t_0-2$. Since $2^{t_0}$ is finite, one can find that $t_{0 s_0}=t_{1 s_1}$. Clearly, $\left|\left\{t_{00}, t_{01}, \cdots, t_{0\left(s_0-1\right)}, t_{10}, \cdots, t_{1\left(s_1-1\right)}\right\}\right|=$ $\left|\left\{t_0-1, t_0-2, \cdots, t_{0 s_0}+1\right\}\right|=s_0+s_1$.

We put the two expressions $\left(m_0=\sum_{i=0}^{s_0} 2^{t_{0 i}}\right.$ and $\left.m_1=\sum_{i=0}^{s_1} 2^{t_{1 i}}\right)$ together as follows:
\begin{equation}
\begin{split}
m&=m_0+m_1  =2^{t_0}
=\sum\nolimits_{l=t_{0s_{0}}+1}^{t_0-1} 2^l+2^{t_{0s_0}}+2^{t_{0s_1}} \\
&=2^{t_0-1}+\cdots+2^{t_0-j}+\cdots
+2^{t_{0s_0}+1}+2^{t_{0s_0}}+2^{t_{0s_0}}.
\end{split}
\label{equa4}
\end{equation}
(Observe that (2) contains exactly $\left(s_0+1\right)+\left(s_1+1\right)$ terms and $4|m$.) From the function $f(m)$, we can calculate
\begin{equation}\label{equa5}
\begin{split}
&f\left(m\right)\\
&= \left(t_0-1\right) 2^{t_0-1}+2^{t_0-1}\\
&+\left[\left(t_0-2\right) 2^{t_0-2}+2 \cdot1\cdot 2^{t_0-2}+2^{t_0-2}\right]\\
&+\cdots+\left[\left(t_0-i\right) 2^{t_0-i}+2 \cdot(i-1) \cdot 2^{t_0-i}+2^{t_0-i}\right] \\
&+\cdots+\left[\left(t_{0 s_0}\right) 2^{t_{0 s_0}}+2 \cdot\left(t_0-t_{0 s_0}-1\right) \cdot 2^{t_{0 s_0}}+2^{t_{0 s_0}}\right]\\
&+\left[\left(t_{0 s_0}\right) 2^{t_{0 s_0}}+2 \cdot\left(t_0-t_{0 s_0}\right) \cdot 2^{t_{0 s_0}}+2^{t_{0 s_0}}\right].
\end{split}
\end{equation}

By (\ref{equa3}), for any term $2^{t_{0 i}}$ in $m_0=\sum_{i=0}^{s_0} 2^{t_{0 i}}$, there is a term $2^{t_0-j}$ in  (\ref{equa4}) such that $t_0-j=t_{0i}$, and hence there is a term $\left[\left(t_0-j\right) 2^{t_0-j}+2 \cdot(j-1) \cdot 2^{t_0-j}+2^{t_0-j}\right]$ in  (\ref{equa5}) corresponding to the term.
Hence, $\left(t_0-j\right) 2^{t_0-j}+2 \cdot(j-1) \cdot 2^{t_0-j}+2^{t_0-j}=t_{0 i} 2^{t_{0 i}}+2 \cdot i \cdot 2^{t_{0 i}}+2 \cdot(j-1-i) \cdot 2^{t_{0 i}}+2^{t_{0i}}$. Due to $m_1>m_0$, it follows that $j-2 \geq i$.
This indicates $\sum_{i=0}^{s_0} 2 \cdot(j-1-i) \cdot 2^{t_{0 i}} \geq \sum_{i=0}^{s_0} 2 \cdot 1\cdot2^{t_{0 i}}=2 m_0$.
Assume that $m_0=4p_0+q_0$. 
We can rewrite it as $f(m_0)=\sum_{i=0}^{s_0} t_{i}2^{t_i}+\sum_{i=0}^{s_0}2 \cdot i \cdot 2^{t_i}+m_0-a$, $a=0,\,1,\,2$.
As a result, for $m_0$, there is a term $f(m_0)+\sum_{i=0}^{s_0} 2 \cdot(j-1-i) \cdot 2^{t_{0 i}}+a$ in $(3)$.
Thus, $f\left(m_0\right)+f\left(m_1\right)+2 m_0 \leq f(m)$ for $m=2^{t_0}=m_0+m_1$ and $m_1>m_0>0$.

By induction, we have $e x_{m_0}(H_n^4)+e x_{m_1}(H_n^4)\leq f\left(m_0\right)+f\left(m_1\right)$. Since $f\left(m_0\right)+f\left(m_1\right)+2 m_0 \leq f(m)$, we have $e x_m (H_n^4)\leq$ $e x_{m_0}(H_n^4)+e x_{m_1}(H_n^4)+2 m_0\leq f\left(m_0\right)+f\left(m_1\right)+2 m_0 \leq f(m)$ for $m_0 \geq 0$.
\par\textbf{Case 2.} Suppose $s>0$.
We demonstrate this case, recursively, and outline the following two operations:
\par$\left(O_1\right)$ If $t_{10}>t_{00}$, consider $T^1 \subset X_1$  with $\left|T^1\right|=2^{t_{10}}$.  Assume that $X^1=X \backslash T^1, X_0^1=X_0, X_1^1=X_1 \backslash T^1$, and $m^1=m-\left|T^1\right|, m_0^1=m_0, m_1^1=m_1-\left|T^1\right|$.

Firstly, we consider the following two subcases: $2^{t_{10}}$ is a term of the expression $m=\sum_{i=0}^s 2^{t_i}$, and the other.

If $2^{t_{10}}$ is a term of the expression $m=\sum_{i=0}^s 2^{t_i}$, then either $t_{10}=t_0$ or $t_{10}=t_1=t_0-1$. (Note that $m_1 \geq m/2$ and $s>0$. Then $m_1>2^{t_0-1}$ and hence $t_{10} \geq t_0-1$.)
Due to the fact that $m^1=m-2^{t_{10}}$, the remainders of $m$ and $m^1$ upon being divided by $4$ are equal. In light of this,
if $t_{10}=t_0$, then we can assume that $f\left(m^1\right)=\sum_{i=1}^s t_i 2^{t_i}+\sum_{i=1}^s 2 \cdot(i-1) \cdot 2^{t_i}+m^1-a$ and  $f(m)=\sum_{i=0}^s t_i 2^{t_i}+\sum_{i=0}^s 2 \cdot i \cdot 2^{t_i}+m-a$, with $a=0,\ 1,\ 2$. Accordingly, $f\left(m^1\right)$ is obtained from $f(m)$ by removing $t_0 2^{t_0}+2\left(2^{t_1}+\cdots+2^{t_s}\right)+2^{t_0}=t_0 2^{t_0}+2 m^1+2^{t_0}$, that is, $f(m)=f\left(m^1\right)+2 m^1+t_0 2^{t_{10}}+2^{t_0}$.
If $t_{10}=t_1=t_0-1$, then $f\left(m^1\right)$ is obtained from $f(m)$ by removing $t_1 2^{t_1}+2\left(2^{t_1}+\cdots+2^{t_s}\right)+2^{t_1}$, that is, $f(m) \geq f\left(m^1\right)+2 \cdot 2^{t_{10}}+t_{10} 2^{t_{10}}+2^{t_{10}}$.

If $2^{t_{10}}$ is not a term of the expression $m=\sum_{i=0}^s 2^{t_i}$, then $t_{10}=t_0-1>t_1$.
It is worth noting that $f(m)=$ $\left(t_{10}+1\right) 2^{t_{10}+1}+\sum_{i=1}^s t_i 2^{t_i}+\sum_{i=1}^s 2  i \cdot 2^{t_i}+m-a$ and $f\left(m^1\right)=t_{10} 2^{t_{10}}+\sum_{i=1}^s t_i 2^{t_i}+\sum_{i=1}^s 2  i \cdot 2^{t_i}+m^1-a$ for $a=0,\ 1,\ 2$. It means that  $f(m)-f\left(m^1\right)=$ $\left(t_{10}+1\right) 2^{t_{10}+1}-t_{10} 2^{t_{10}}+2^{t_{10}}=t_{10} 2^{t_{10}}+2^{t_{10}+1}+2^{t_{10}}$. As a consequence,
 $f(m) \geq f\left(m^1\right)+2 \min \left\{m^1, 2^{t_{10}}\right\}+t_{10} 2^{t_{10}}+2^{t_{10}}$.

Because of $2\left|E\left(H_n^4[X]\right)\right| \leq 2\left|E\left(H_n^4\left[X-T^1\right]\right)\right|+2 \min \left\{m^1, 2^{t_{10}}\right\}+t_{10} 2^{t_{10}}+2^{t_{10}}$, combining this with $f(m) \geq f\left(m^1\right)+$ $2 \min \left\{m^1, 2^{t_{10}}\right\}+t_{10} 2^{t_{10}}+2^{t_{10}}$, it can be seen that $2\left|E\left(H_n^4[X]\right)\right| \leq f(m)$ follows from $2\left|E\left(H_n^4\left[X-T^1\right]\right)\right|=2\left|E\left(H_n^4\left[X^1\right]\right)\right| \leq f\left(m^1\right)$.
\par$\left(O_2\right)$ If $t_{10}=t_{00}$, assume that $T_i^1 \subset X_i$ with $\left|T_i^1\right|=2^{t_{10}}=2^{t_{00}}$. Define $T^1=T_0^1 \cup T_1^1, X^1=X \backslash T^1, X_i^1=X_i \backslash T_i$, and $m^1=m-\left|T_0^1\right|-\left|T_1^1\right|, m_i^1=m_i-\left|T_i^1\right|$.

In the same manner, we catch on to the fact that $f(m) \geq f\left(m^1\right)+2 \min \left\{m^1, 2^{t_{10}+1}\right\}+t_{10+1}2^{t_{10}+1}+ 2^{t_{10}+1}$ and $2\left|E\left(H_n^4[X]\right)\right| \leq 2\left|E\left(H_n^4\left[X-\left(T_0 \cup T_1\right)\right]\right)\right|+$ $2 \min \left\{m^1, 2^{t_{10}+1}\right\}+t_{10+1}2^{t_{10}+1}+ 2^{t_{10}+1}$. As a result, we have $2\left|E\left(H_n^4[X]\right)\right| \leq f(m)$ by $2\left|E\left(H_n^4\left[X-T^1\right]\right)\right|=2\left|E\left(H_n^4\left[X^1\right]\right)\right| \leq f\left(m^1\right)$.

Now,
we repeat $\left(O_1\right)$ and/or $\left(O_2\right) i$ times $\left(i \leq \min \left\{s_0, s_1\right\}\right)$ such that either $X^i \subset V\left(A_0\right)$ (or $V\left(A_1\right)$ ), or $m^i=2^l$ for some integer $l$. Therefore, $2\left|E\left(H_n^4\left[X^i\right]\right)\right| \leq f\left(m^i\right)$ is  derived by induction, or by Case 1.
 The proof is  completed.
\end{proof}

\begin{thm}\label{thmex_m(H_n^4)=f(m)}
$ex_m(H_n^4)=f(m)$.
\end{thm}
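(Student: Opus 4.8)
The plan is to prove $ex_m(H_n^4)=f(m)$ by establishing matching upper and lower bounds. The upper bound $ex_m(H_n^4)\le f(m)$ is exactly Lemma~\ref{lem3.1}, so the remaining work is to exhibit, for each $m$ with $1\le m\le 2^{n}$, an explicit vertex set $X\subseteq V(H_n^4)$ with $|X|=m$ and $2|E(H_n^4[X])|\ge f(m)$. The natural candidate is the set $L_m^n$ built from the decimal initial segment $S_m=\{0,1,\dots,m-1\}$ described just before the lemma; the whole point of introducing $L_m^n$ (and of the bitmap pictures in Fig.~\ref{The bitmaps of adjacency matrix}) is that this ``lexicographic-type'' order is conjecturally optimal, mirroring Harper's theorem for $Q_n$ (Lemma~\ref{lem2.3}).

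First I would set up an induction on $n$ that parallels the structure of Lemma~\ref{lem3.1}, but now running in the reverse direction: split $m=m_0+m_1$ according to how $L_m^n$ distributes across the two halves $V_0$ and $V_1$, where for $m\le 2^{n-1}$ one takes $m_0=m$, $m_1=0$ and fills only one copy, while for $2^{n-1}<m\le 2^n$ one takes $m_0=2^{n-1}$ and $m_1=m-2^{n-1}$, so that $H_n^4[L_m^n]$ consists of a full copy $A_0$, a sub-configuration $L_{m_1}^{n-1}$ inside $A_1$, and exactly $m_1$ matching edges joining the two. This yields $2|E(H_n^4[L_m^n])| = ex_{2^{n-1}}(H_{n-1}^4) + 2|E(H_{n-1}^4[L_{m_1}^{n-1}])| + 2m_1$, and by the induction hypothesis the right side equals $f(2^{n-1})+f(m_1)+2m_1$. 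The base cases $n=2,3$ are finite checks. So the identity reduces to the purely arithmetic recurrence
\begin{equation}
f(m)=f(2^{n-1})+f(m-2^{n-1})+2(m-2^{n-1})\qquad\text{for }2^{n-1}<m\le 2^n,
\label{eq:frecur}
\end{equation}
together with the fact that $H_n^4[L_m^n]$ and its complement are connected (which the paragraph before Lemma~\ref{lem3.1} already argues via the bijective-connection-network spanning subgraph $B_n$).

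The main obstacle is verifying \eqref{eq:frecur} cleanly given the case split in the definition of $f$ by the residue $q=m\bmod 4$. I would handle this by isolating the contribution of the top bit: writing $m=2^{t_0}+m'$ with $m'=m-2^{t_0}=\sum_{i=1}^s 2^{t_i}$, and noting that the residues of $m$ and $m'$ modulo $4$ coincide whenever $t_0\ge 2$ (which holds for $n\ge 3$), so the ``$+4p$'' or ``$+4p+2$'' correction term behaves additively: $4\lfloor m/4\rfloor - 4\lfloor m'/4\rfloor = 2^{t_0}$ and the $q$-branch is the same for $m$ and $m'$. Then the two genuine-binary sums satisfy $\sum_{i=0}^s t_i 2^{t_i} = t_0 2^{t_0} + \sum_{i=1}^s t_i 2^{t_i}$ and $\sum_{i=0}^s 2i\cdot 2^{t_i} = \sum_{i=1}^s 2i\cdot 2^{t_i} = 2m' + \sum_{i=1}^s 2(i-1)\cdot 2^{t_i}$, and comparing with $f(2^{t_0}) = t_0 2^{t_0} + 2^{t_0}$ gives exactly $f(m) = f(2^{t_0}) + f(m') + 2m'$; applying this with $t_0=n-1$ is \eqref{eq:frecur}, and iterating it is what matches the induction step. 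I would also double-check the boundary where $m_1\in\{1,2,3\}$ is small (so that $A_1$ contributes few or no internal edges and only the matching edges count) and the parity subtlety when $q=3$, since that is the only place the ``$+2$'' appears; these are the spots where an off-by-one is most likely to hide, so I would confirm them against the $n=3,4$ bitmaps before declaring the induction closed.
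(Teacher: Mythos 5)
Your overall strategy is the paper's: the upper bound is Lemma~\ref{lem3.1}, and the lower bound comes from exhibiting $L_m^n$ and counting its edges. The paper presents the construction non-recursively, as a nest of sub-$H_n^4$ graphs $H^0,\dots,H^s$ of dimensions $t_0>\dots>t_s$ with a matching of size $2^{t_k}$ between $H^k$ and each $H^j$, $j<k$; your top-bit recursion produces the same vertex set and the same edge count, just packaged as the recurrence $f(m)=f(2^{t_0})+f(m')+2m'$, which is arguably cleaner. Your arithmetic verification of that recurrence for $t_0\ge 2$ is correct.

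The one genuine issue is exactly the spot you flag but do not resolve: the recurrence and the edge count ``exactly $m_1$ matching edges joining the two'' both fail at the terminal step when $q=3$. Concretely, $f(3)=6$ but $f(2)+f(1)+2\cdot 1=2+0+2=4$, so the recursion cannot bottom out at $m'=3$ by another application of the same identity. The resolution, which the paper states explicitly, is that the last three vertices $\{4p,4p+1,4p+2\}$ lie in a common $2$-dimensional sub-$H_n^4$, i.e.\ a $K_4$, so the leftover singleton $H^s=K_1$ is adjacent to \emph{both} vertices of $H^{s-1}=K_2$: there are two edges between them, not one matching edge, and the three vertices induce a triangle with degree sum $6=f(3)$. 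This is precisely where the ``$+2$'' in the $q=3$ branch of $f$ comes from, so it must be handled as a separate base case of the induction rather than by the generic matching count. With that base case added (together with the trivial checks for $m\in\{1,2\}$), your argument closes and coincides with the paper's proof.
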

\begin{proof}
Based on the previous lemma, the only requirement is to demonstrate that
 $e x_m(H_n^4) \geq f(m)$. It is sufficient to show that there is a subgraph $H_n^4\left[L_m^n\right]$ in $H_n^4$ satisfying $\left|V\left(H_n^4\left[L_m^n\right]\right)\right|=m,~\left|E\left(H_n^4\left[L_m^n\right]\right)\right|=f(m)/2$.
 Let $V(H^0)=0 \cdots 0 X_{t_0}\cdots X_{1}=\{0\cdots0x_{t_0}\cdots x_1:x_i\in\{0,1\},~1\leq i\leq t_0\}$.
 Suppose, furthermore, that $H^i$ is taken from a $(t_{i-1}+1)$-dimensional subcube gained from $H^{i-1}$ by changing the 0 of $\left(t_{i-1}+1\right)$th-coordinate of $H^{i-1}$ to 1.
 We choose $H_n^4\left[L_m^n\right]$ in the following manner.
\par Take $t_i$-dimensional sub-$H_n^4$ graphs   as follows ($t_i\geq2$):\\
$
~~~~~~H^0:  0 \cdots 0  \underbrace{X_{t_0} \cdots X_1}_{t_0}
\\\left(t_0\right.\text{-dimensional sub-$H_n^4$ graph) } \\
~~~~~~H^1:  0 \cdots 01\underbrace{0 \cdots 0 \underbrace{X_{t_1 } \cdots X_{1}}_{t_1}}_{t_0}
\\\text { (take a } t_1 \text {-dimensional sub-$H_n^4$ graph }\\
\text {from } 0 \cdots 01 X_{t_0} \cdots X_1 ) \\
~~~~~~H^2:  0 \cdots 01\underbrace{0 \cdots 01\underbrace{0 \cdots 0\underbrace{X_{t_2} \cdots X_{1}}_{t_2}}_{t_1}}_{t_0}
\\\text { (take a } t_2 \text {-dimensional sub-$H_n^4$ graph }\\
\text{from } 0 \cdots 010 \cdots 01 X_{t_1} \cdots X_1 ) \\
~~~~~~\cdots
$.

Let $G_{k,j}$  be denoted by $H_n^4\left[V(H^k)\cup V(H^j)\right]$, and let $H_n^4\left[L_m^n\right]$ be denoted by $ H_n^4\left[V\left(H^0\right) \cup \cdots \cup V\left(H^s\right)\right]$. Pay attention to the fact that
 each $H_n^4[V(H^i)]$ is a $t_i$-dimensional sub-$H_n^4$ graph,   for $0\leq i\leq s$, except for  $t_s=1$, $H^s$ is the complete graph $K_2$ and if $t_s=0$, $H^s$ has exactly one vertex.
 In light of the definition of $H_n^4$ and the structure of $H_n^4\left[L_m^n\right]$, there exists a matching of size $2^{t_k}$ between $V(H^{k})$ and $V(H^{j})$ in the subgraph $G_{k,j}$ for $0\leq j<k\leq s$ and $t_j\geq 2$ but for $t_j=t_{s-1}=1$ and $t_k=t_s=0$,  there exist two edges between $V(H^{s-1})$ and $V(H^{s})$ in the subgraph $G_{s,s-1}$.

 The size of edges of $H_n^4\left[L_m^n\right]$  can  be figured out
 by considering the edges within $H^{i,}s\left( \left( \sum_{i=0}^s t_i 2^{t_i}+4p\right)/2\right)$ and the edges between $H^{i,}s\left( \sum_{i=0}^s 2 \cdot i \cdot 2^{t_i}/2\right.$ for $0\leq q\leq2$ or $ \left(\sum_{i=0}^s 2 \cdot i \cdot 2^{t_i}\right)/2+1$ for $\left.q=3\right)$.
 Thus, the claim holds, i.e., $e x_m(H_n^4) \geq f(m)=2|E(H_n^4[L_m^n])|$ and consequently $e x_m(H_n^4)=f(m)=2|E(H_n^4[L_m^n])|$.
\end{proof}
\begin{figure}[tb]
	\begin{center}
		 \includegraphics[width=1\linewidth]{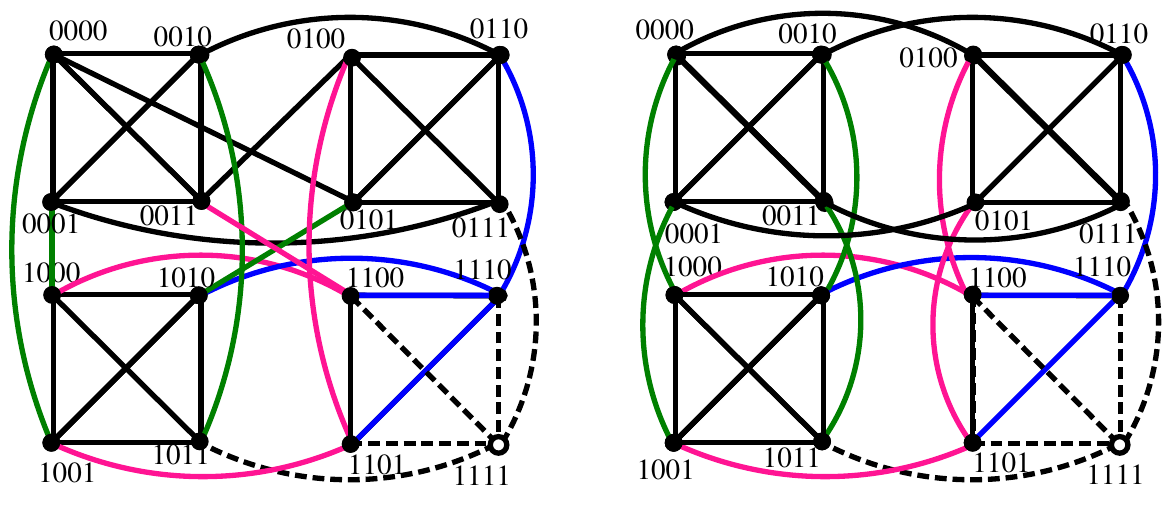}
		\caption{Induced subgraphs   $H_4^4[L_{15}^{~4}]$ and $Q_{4,3}[L_{15}^{~4}]$.}
\label{Induced subgraphs}
	\end{center}
\end{figure}
As an example, assume that $n=4$, $m=15$, and $S_{15}=\{0$,\,$1$,\,$2$,\,$3$,\,$4$,\,$5$,\,$6$,\,$7$,\,$8$,\,$9$,\,$10$,\,$11$,\,$12$,\,$13$,\,$14\}$.
Then $L_{15}^4=\{0000$,\,$0001$,\,$0010$,\,$0011$,\,$0100$,\,$0101$,\,$0110$,\,$0111$,\,$1000$,\,$1001$,\,
$1010$,\,$1011$,\,$1100$,\,$1101$,\,$1110\}$.
Since $15=2^3+2^2+2^1+2^0$ and $15=4\times3+3$,
 we can calculate that $ex_{15}(H_n^4)=ex_{15}(Q_{n,n-1})=2|E(H_n^4[L_{15}^n])|=3\times2^3+2\times2^2+1\times2^1+0\times2^0+2\times0\times2^3+ 2\times1\times2^2+2\times2\times2^1+2\times3\times2^0+4\times3+2 =70$ by  Theorem \ref{thmex_m(H_n^4)=f(m)}.
The subgraphs induced by $L_{15}^4$ in a $4$-dimensional $K_4$-hypercube and an enhanced hypercube $Q_{4,3}$ are demonstrated in Fig.~\ref{Induced subgraphs}.
The edges within sub-$H_4^4$ graphs are colored in solid black lines. The edges between $H_3^4$ and $H_2^4$, $H_2^4$ and  $K_2$, $K_2$ and $K_1$ are colored in
green, pink, and blue, respectively.
They form the induced subgraph $H_4^4[L_{15}^4]$.
\section{Some properties  of the optimal solution of the edge isoperimetric problem $\xi_m(H_n^4)$}
\label{sec4}
In this section, we present several lemmas that describe the minimum edge isoperimetric number $\xi_m(H_n^4)$, which offers an upper bound of $\lambda(H_n^4)$. These lemmas shed light on the significance of the monotonic interval and fractal structure in determining the specific value of the function $\lambda_h\left(H_n^4\right)$.
\begin{lem}
For $n\geq2$, $1<m<2^{n}$, then $\xi_m(H_n^4)>0$.
\end{lem}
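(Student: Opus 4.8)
The plan is to deduce the statement directly from the connectedness of $H_n^4$ together with the existence of a bipartition witnessing the minimum that defines $\xi_m$. Recall that $\xi_m(H_n^4)$ is the minimum of $|[X,\overline{X}]|$ over all $X$ with $|X|=m$ for which both $H_n^4[X]$ and $H_n^4[\overline{X}]$ are connected. So the first step I would carry out is to check that this family of subsets is nonempty for every $m$ in the stated range. For $1<m\le 2^{n-1}$ this is exactly what Section~\ref{sec3} supplies: the set $L_m^n$ satisfies $|L_m^n|=m$, and, as observed there (delete the $(n-1)$-complementary edges to obtain a spanning bijective connection network $B_n$, for which both $B_n[L_m^n]$ and $B_n[\overline{L_m^n}]$ are connected; adding back edges keeps connectivity), both $H_n^4[L_m^n]$ and $H_n^4[\overline{L_m^n}]$ are connected. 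For $2^{n-1}<m<2^n$ one takes instead $\overline{L^n_{2^n-m}}$, which has size $m$ and whose induced subgraph, together with that of its complement $L^n_{2^n-m}$, is connected; hence the minimum defining $\xi_m(H_n^4)$ is attained.

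The second step is to observe that every admissible $X$ has $[X,\overline{X}]\neq\emptyset$. Indeed, for $1\le m\le 2^n-1$ both $X$ and $\overline{X}$ are nonempty proper subsets of $V(H_n^4)$, and by Definition~\ref{defi2} the graph $H_n^4$ is built from $K_4$ by repeatedly joining two copies of smaller $K_4$-hypercubes with a perfect matching, so $H_n^4$ is connected (the same fact is already used in the proof of Lemma~\ref{lem2.4}, and it also follows from the maximal connectivity established by Brudnak and Lipt{\'a}k). A connected graph has no nonempty proper vertex subset with empty edge boundary, so $|[X,\overline{X}]|\ge 1$ for every such $X$.

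Combining the two steps gives $\xi_m(H_n^4)=\min\{|[X,\overline{X}]|\}\ge 1>0$, as claimed. An alternative route would be to invoke Theorem~\ref{thmex_m(H_n^4)=f(m)} and formula~(\ref{equa1}) to write $\xi_m(H_n^4)=(n+1)m-f(m)$ and then verify $f(m)<(n+1)m$ by a direct estimate, but this requires wading through the case analysis in the definition of $f$, whereas the connectivity argument is immediate. Accordingly, the only point that needs any care — and the closest thing to an obstacle here — is ensuring that the minimization defining $\xi_m$ runs over a nonempty family, i.e.\ that for every $m$ with $1<m<2^n$ there is a bipartition of $V(H_n^4)$ into an $m$-set and its complement with both induced subgraphs connected; this is precisely what the construction of $L_m^n$ in Section~\ref{sec3} provides, together with passage to complements when $m>2^{n-1}$.
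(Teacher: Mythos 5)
Your proof is correct and follows essentially the same route as the paper: the paper likewise deduces positivity from the connectedness of $H_n^4$, which forces the edge-cut $[L_m^n,\overline{L_m^n}]$ (and indeed the boundary of any admissible $X$) to be nonempty. Your version is somewhat more careful in checking that the family being minimized over is nonempty and in handling $m>2^{n-1}$ by passing to complements, but the underlying idea is identical.
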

\begin{proof}
For $n\geq2$, $1<m<2^{n}$, the edge-cut$[L_m^n,\overline{L_m^n}]$ of the connected graph $H_n^4$ is not empty. Thus, $\xi_m(H_n^4)=\left| [L_m^n,\overline{L_m^n} ]\right|>0$. The proof is completed.
\end{proof}
\begin{lem}\label{lem4.4}
Let  $n,\,c,\,m$ be integers with $n\geq4,~0\leq c\leq n-2$, and $2^c\leq m\leq2^{n-1}$. Then $\xi_{m}(H_n^4)\geq \xi_{2^c}(H_n^4).$
\end{lem}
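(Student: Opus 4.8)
The plan is to reduce everything to the function $f$ via the regularity identity $\xi_m(H_n^4)=(n+1)m-ex_m(H_n^4)=(n+1)m-f(m)$, which is \eqref{equa1} together with Theorem~\ref{thmex_m(H_n^4)=f(m)}. Since $f(2^c)=(c+1)2^c$ for $c\ge 2$, this gives $\xi_{2^c}(H_n^4)=(n-c)2^c$, while $\xi_1(H_n^4)=n+1$ and $\xi_2(H_n^4)=2n$. A one-line computation then shows $\xi_{2^{c'+1}}(H_n^4)-\xi_{2^{c'}}(H_n^4)=(n-c'-2)2^{c'}\ge 0$ for $2\le c'\le n-2$, and the two remaining comparisons $\xi_2(H_n^4)-\xi_1(H_n^4)=n-1\ge0$ and $\xi_4(H_n^4)-\xi_2(H_n^4)=2n-8\ge 0$ hold for $n\ge 4$; hence
$$\xi_{2^0}(H_n^4)\le\xi_{2^1}(H_n^4)\le\cdots\le\xi_{2^{n-1}}(H_n^4).$$
Consequently it suffices to establish the \emph{local} statement: for every $c'$ with $0\le c'\le n-2$ and every $m$ with $2^{c'}\le m\le 2^{c'+1}$ one has $\xi_m(H_n^4)\ge\xi_{2^{c'}}(H_n^4)$. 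Indeed, any $m$ with $2^c\le m\le 2^{n-1}$ lies in some block $[2^{c'},2^{c'+1}]$ with $c\le c'\le n-2$, and then $\xi_m(H_n^4)\ge\xi_{2^{c'}}(H_n^4)\ge\xi_{2^c}(H_n^4)$ by the chain above.

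For $c'\in\{0,1\}$ only $m\in\{1,2,3,4\}$ is involved, and $\xi_1=n+1$, $\xi_2=2n$, $\xi_3=3n-3$, $\xi_4=4n-8$ are compared directly (all are $\ge\xi_{2^{c'}}$ for $n\ge4$). For $c'\ge 2$ the engine is the self-similar recursion
$$f(2^{c'}+j)=f(2^{c'})+f(j)+2j,\qquad 0\le j\le 2^{c'},$$
which I would verify by writing the binary decomposition $j=\sum_{i=0}^{r}2^{u_i}$ with $u_0<c'$ (the boundary case $j=2^{c'}$ being a direct check): prepending the block $2^{c'}$ produces the decomposition of $2^{c'}+j$ and raises the index of every block of $j$ by one, which inflates the term $\sum_i 2i\cdot 2^{t_i}$ of $f$ by exactly $\sum_i 2\cdot 2^{u_i}=2j$; the new leading block contributes $c'2^{c'}+2^{c'}=(c'+1)2^{c'}=f(2^{c'})$; and the remaining ``$+4p$ (or $+4p+2$)'' correction is unchanged because $4\mid 2^{c'}$ forces $(2^{c'}+j)\bmod 4=j\bmod 4$ and $\lfloor(2^{c'}+j)/4\rfloor=2^{c'-2}+\lfloor j/4\rfloor$.

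Granting the recursion, fix $c'$ with $2\le c'\le n-2$ and $m$ with $2^{c'}\le m\le 2^{c'+1}$, and put $j=m-2^{c'}\in[0,2^{c'}]\subseteq[0,2^{n-2}]$. Then
$$\xi_m(H_n^4)-\xi_{2^{c'}}(H_n^4)=(n+1)j-\bigl(f(m)-f(2^{c'})\bigr)=(n+1)j-f(j)-2j=(n-1)j-f(j),$$
so the argument is finished once we know $f(j)\le (n-1)j$ for $0\le j\le 2^{n-2}$. But $f(j)=ex_j(H_{n-2}^4)$ by Theorem~\ref{thmex_m(H_n^4)=f(m)} (legitimate since $n-2\ge2$ and $j\le 2^{n-2}=|V(H_{n-2}^4)|$), and $H_{n-2}^4$ is $(n-1)$-regular, so for a densest $j$-vertex set $X$ we get $f(j)=2|E(H_{n-2}^4[X])|\le (n-1)|X|=(n-1)j$; alternatively, $f(j)\le ex_j(Q_n)+j\le (n-2)j+j$ for $j\le 2^{n-2}$ by Lemma~\ref{lem2.3} read in $Q_{n-2}$.

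The main obstacle is the self-similar identity $f(2^{c'}+j)=f(2^{c'})+f(j)+2j$: proving it cleanly requires a careful, term-by-term matching of the defining expression of $f$ at $2^{c'}+j$ against that at $j$, accounting for the block-index shift, the new leading term, and the invariance of the mod-$4$ correction. Once this \emph{fractal} recursion and the regularity bound $f(j)\le (n-1)j$ are in place, the rest is elementary arithmetic with the closed forms at powers of two.
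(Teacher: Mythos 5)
Your proof is correct and follows essentially the same route as the paper: reduce to the dyadic blocks $2^{c'}\le m\le 2^{c'+1}$ (checking $m\le 4$ by hand), peel off the leading power of two via the additive recursion $f(2^{c'}+j)=f(2^{c'})+f(j)+2j$ --- the $H_n^4$-analogue of Lemma~\ref{lem2.3}(3), which the paper invokes directly together with the relation between $ex_m(H_n^4)$ and $ex_m(Q_n)$ --- and then bound the remainder by a degree count. The only cosmetic difference is that you finish with the regularity bound $f(j)=ex_j(H_{n-2}^4)\le (n-1)j$, whereas the paper bounds $ex_{m'}(Q_n)\le tm'$ and tracks the leftover $q\ge 0$; both are valid.
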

\begin{proof}
We  transform the claim  by considering $\xi_{m}(H_n^4)\geq \xi_{2^t}(H_n^4)$, with $2^t<m \leq 2^{t+1}$, $c \leq t \leq n-2$. Let $m=\sum_{i=0}^s 2^{t_i}$ be the decomposition of $m$.

As  $2^t<m \leq 2^{t+1}$, it follows that $0<m^{\prime}=m-2^t \leq 2^t$. We need only consider three cases.
If $t=0$, then $m=2$ and $\xi_2(H_n^4)-\xi_1(H_n^4)=n-1 \geq 0$. If $t=1$, then $2<m \leq 4$. It follows that $\xi_3(H_n^4)-\xi_2(H_n^4)=n-3 \geq 0$  and $\xi_4(H_n^4)-\xi_2(H_n^4)=2n-8 \geq 0$ .

Now, turn to the case $2 \leq t \leq n-2$. Let $k$ be an integer satisfying $t_k \geq 2$, $t_{k+1} \leq 1$, and let $m=4 p+q$ with $0 \leq q<4$. Then $4 p=\sum_{i=0}^k 2^{t_i}$ and $t_0=t$. By Lemma \ref{lem2.3} (3), we have
\par If $0 \leq q\leq2$, then
\\$
~~\xi_{m}(H_n^4)-\xi_{2^t}(H_n^4)\\
=\left[(n+1) m-e x_m\left(H_n^4\right)\right]-\left[(n+1) 2^t-e x_{2 ^t}\left(H_n^4\right)\right] \\
=(n+1) m^{\prime}-\left(e x_m\left(Q_n\right)+4 p\right)+\left(e x_{2^t}\left(Q_n\right)+2^{t_0}\right) \\
=(n+1) m^{\prime}-\left(e x_{m^{\prime}}\left(Q_n\right)+2 m^{\prime}\right)-\sum_{i=1}^k 2^{t_i} \\
=(n-1) m^{\prime}-ex_{m^{\prime}}\left(Q_n\right)-\sum_{i=1}^k 2^{t_ i} \\
\geq(n-1) m^{\prime}-t m^{\prime}-\sum_{i=1}^k 2^{t_i} \\
=(n-1-t) m^{\prime}-\sum_{i=1}^k 2^{t_i} \\
\geq m^{\prime}-\sum_{i=1}^k 2^{t_i}=q \geq 0.$

If $q=3$, then $t_{k+1}=1,~t_{k+2}=0$. Thus,
\\$
~~\xi_{m}(H_n^4)-\xi_{2^t}(H_n^4)\\
=\left[(n+1) m-e x_m(H_n^4)\right]-\left[(n+1) 2^t-e x_{2^ t}\left(H_n^4\right)\right] \\
=(n+1) m^{\prime}-\left(e x_m\left(Q_n\right)+4 p+2\right)+\left(e x_{2^t}\left(Q_n\right)+2^{t_0}\right) \\
=(n+1) m^{\prime}-\left(e x_{m^{\prime}}\left(Q_n\right)+2 m^{\prime}\right)-\sum_{i=1}^k 2^{t_i}-2 \\
\geq(n-1) m^{\prime}-t{m^{\prime}}-\sum_{i=1}^k 2^{t_i}-2 \\
=(n-1-t) m^{\prime}-\sum_{i=1}^k 2^{t_i}-2 \\
\geq m^{\prime}-\sum_{i=1}^k 2^{t_i}-2=q-2 \geq 0.$
\par In summary, the proof is completed.
\end{proof}
Lemma \ref{lem4.2} illustrates the monotonic increasing property of $\xi_m(H_n^4)$ for $1\leq m \leq2^{\lceil n/2  \rceil }-1$ and paves the way for Theorem \ref{thm1}.
\begin{lem}\label{lem4.2}
For $n\geq3$, we have \\
(1). the function $\xi_m(H_n^4)$ is monotonically increasing for the interval $1\leq m \leq2^{\lceil n/2  \rceil }-1$. In other words, $\xi_{m+1}(H_n^4)-\xi_m(H_n^4)\geq0$ for $n\geq3,1\leq m \leq2^{\lceil n/2  \rceil }-2$; and\\
(2). $\xi_m(H_n^4) > \xi_{2^{\lceil n / 2\rceil}-2-\gamma}(H_n^4) =\xi_{2^{\lceil n / 2\rceil}}(H_n^4)$ for $2^{\lceil n / 2\rceil}-2-\gamma<m<2^{\lceil n / 2\rceil}$.
\end{lem}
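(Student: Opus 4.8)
The plan is to reduce both statements to one clean fact about the increments of $f$. By Equation (\ref{equa1}) together with Theorem \ref{thmex_m(H_n^4)=f(m)} we have $\xi_m(H_n^4)=(n+1)m-f(m)$, hence for any $m<m'$
$$\xi_{m'}(H_n^4)-\xi_m(H_n^4)=(n+1)(m'-m)-\bigl(f(m')-f(m)\bigr).$$
So everything comes down to evaluating $f(m+1)-f(m)$. To do this I would write $f(m)=ex_m(Q_n)+r(m)$, where $r(m)=4\lfloor m/4\rfloor$ if $m\not\equiv3\pmod4$ and $r(m)=4\lfloor m/4\rfloor+2$ if $m\equiv3\pmod4$ (immediate from the definition of $f$ and Lemma \ref{lem2.3}(1)). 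Then Lemma \ref{lem2.3}(2) gives $ex_{m+1}(Q_n)-ex_m(Q_n)=2s+2$, where $s+1$ is the number of terms in the binary decomposition $m=\sum_{i=0}^s2^{t_i}$, and a one-line check of the four residues of $m$ mod $4$ shows $r(m+1)-r(m)=0$ for $m\equiv0,1$ and $=2$ for $m\equiv2,3$. Consequently
$$f(m+1)-f(m)=2s+2+\varepsilon(m),\qquad \varepsilon(m)\in\{0,2\},$$
so in particular $f(m+1)-f(m)\le 2s+4$.

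For part (1), if $1\le m\le 2^{\lceil n/2\rceil}-2$ then $m<2^{\lceil n/2\rceil}-1$, so $m$ has at most $\lceil n/2\rceil-1$ ones in binary, i.e. $s\le\lceil n/2\rceil-2$. Hence $f(m+1)-f(m)\le 2\lceil n/2\rceil\le n+1$, which yields $\xi_{m+1}(H_n^4)-\xi_m(H_n^4)\ge0$ and proves the monotonic (weakly increasing) behaviour on $[1,2^{\lceil n/2\rceil}-1]$.

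For part (2), set $k=\lceil n/2\rceil$ (so $k\ge2$ since $n\ge3$, with $k=n/2$ for even $n$ and $k=(n+1)/2$ for odd $n$). First, $ex_{2^k}(Q_n)=k2^k$ by Lemma \ref{lem2.3}(1) and $r(2^k)=2^k$, so $f(2^k)=(k+1)2^k$ and $\xi_{2^k}(H_n^4)=(n-k)2^k$. Next I would evaluate the increment formula at the arguments $2^k-1,\,2^k-2,\,2^k-3$, whose binary decompositions have $k,\,k-1,\,k-1$ terms and which are $\equiv3,\,2,\,1\pmod4$ respectively (valid because $k\ge2$), obtaining
$$f(2^k)-f(2^k-1)=2k+2,\qquad f(2^k-1)-f(2^k-2)=2k,\qquad f(2^k-2)-f(2^k-3)=2k-2.$$
Feeding these into the difference identity above and using $2k=n$ (even $n$) or $2k=n+1$ (odd $n$), one gets: for even $n$, $\xi_{2^k-1}(H_n^4)-\xi_{2^k-2}(H_n^4)=1>0$ and $\xi_{2^k}(H_n^4)-\xi_{2^k-2}(H_n^4)=0$; for odd $n$, $\xi_{2^k-2}(H_n^4)-\xi_{2^k-3}(H_n^4)=2>0$, $\xi_{2^k-1}(H_n^4)-\xi_{2^k-3}(H_n^4)=2>0$, and $\xi_{2^k}(H_n^4)-\xi_{2^k-3}(H_n^4)=0$. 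Since $2^{\lceil n/2\rceil}-2-\gamma$ equals $2^k-2$ for even $n$ and $2^k-3$ for odd $n$, the integers $m$ with $2^{\lceil n/2\rceil}-2-\gamma<m<2^{\lceil n/2\rceil}$ are exactly those just listed, so this is precisely the asserted chain $\xi_m(H_n^4)>\xi_{2^{\lceil n/2\rceil}-2-\gamma}(H_n^4)=\xi_{2^{\lceil n/2\rceil}}(H_n^4)$.

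The \textbf{main obstacle} is purely the boundary bookkeeping in part (2): one must track the Hamming weight and the residue mod $4$ of $2^k-j$ for the first few $j$ at the same time, and split according to the parity of $n$ (packaged in $\gamma$), being careful that $k=\lceil n/2\rceil\ge2$ so the residue claims hold. Everything else — the reduction, the increment formula, the weight count in part (1) — is routine, and the cases $n=3,4$ can be checked directly against the formulas as a sanity test.
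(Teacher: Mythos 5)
Your proof is correct and follows essentially the same route as the paper: both reduce $\xi_{m+1}(H_n^4)-\xi_m(H_n^4)$ to the hypercube increment $ex_{m+1}(Q_n)-ex_m(Q_n)=2s+2$ from Lemma \ref{lem2.3}(2) plus the mod-$4$ correction term in $f$, and then bound $s\le\lceil n/2\rceil-2$ by counting binary digits. If anything, your part (2) is slightly more complete than the paper's, which only verifies the endpoint equality $\xi_{2^{\lceil n/2\rceil}-2-\gamma}=\xi_{2^{\lceil n/2\rceil}}$ and leaves the strictness of $\xi_m>\xi_{2^{\lceil n/2\rceil}-2-\gamma}$ to the weak monotonicity of part (1), whereas you compute the individual increments at $2^k-3,\,2^k-2,\,2^k-1$ and exhibit the strict first step explicitly.
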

\begin{proof}
(1). By Lemma \ref{lem2.3} (2), for each integer $m=4 p+q$, $0 \leq q<4$, we have $ex_m\left(H_n^4\right)-e x_{m+1}\left(H_n^4\right)=ex_m\left(Q_n\right)-e x_{m+1}\left(Q_n\right)=-2 s-2$ for $q=0,1$ and $ex _m\left(H_n^4\right)-e x_{m+1}\left(H_n^4\right)=e x_m\left(Q_n\right)-e x_{m+1}\left(Q_n\right)-2=-2s-4$ for $q=2,3$.
Let $m=\sum_{i=0}^s 2^{t_i}$ be the decomposition of $m$. For $1 \leq m \leq 2^{\lceil n/2  \rceil }-2$, we have $s \leq\lceil n/2  \rceil -2$.

If $q=0,1$, then
\\$
~~\xi_{m+1}(H_n^4)-\xi_m(H_n^4) \\
= \!{\left[(n+1)(m+1)\!-\!ex_{m+1}\left(H_n^4\right)\right]-\!\left[(n+1) m-\!e x_m\left(H_n^4\right)\right] } \\
=(n+1)+\left[e x_m\left(H_n^4\right)-e x_{m+1}\left(H_n^4\right)\right] \\
=(n+1)-2 s-2
= n-2 s-1
 \geq 0.$

If $q=2,3$, then
\\$
~~\xi_{m+1}(H_n^4)-\xi_m(H_n^4) \\
=\!{\left[(n+1)(m+1)-\!e x_{m+1}\left(H_n^4\right)\right]-\!\left[(n+1)m-\!e x_m\left(H_n^4\right)\right] } \\
=n+1-2 s-4
=  n-2 s-3 \geq 0.
$
\par In summary, $\xi_{m+1}(H_n^4)-\xi_m(H_n^4)\geq0$.\\
(2).  First of all, we can  calculate
 $\xi_{2^{\lceil n / 2\rceil}}(H_n^4)
 =(n+1)2^{\lceil n / 2\rceil}-ex_{2^{\lceil n / 2\rceil}}(H_n^4)
 =(n+1)2^{\lceil n / 2\rceil}-ex_{2^{\lceil n / 2\rceil}}(Q_n)-2^{\lceil n / 2\rceil}
 =(n+1)2^{\lceil n / 2\rceil}-\lceil n / 2\rceil2^{\lceil n / 2\rceil}-2^{\lceil n / 2\rceil}=\lfloor n/2\rfloor2^{\lceil n / 2\rceil}$.
 Moreover, it is not difficult to find that $ex_{2^{\lceil n / 2\rceil}-2}(Q_n)=ex_{2^{n/2}}(Q_n)-2[3+2(n/2-2)]=(n/2)2^{n/2}-2n+2$ for $n$ is even and $ex_{2^{\lceil n / 2\rceil}-3}(Q_n)=ex_{2^{\lceil n/2\rceil}}(Q_n)-2[4+3(\lceil n / 2\rceil-2)]=\left\lceil n / 2\right\rceil2^{\lceil n / 2\rceil}-3n+1$ for $n$ is odd. By Lemma~\ref{lem4.2}, $~\xi_{m+1}(H_n^4)-\xi_m(H_n^4)\geq 0$ for $1 \leq m \leq 2^{\lceil n / 2\rceil}-2$.

  It suffices to show that $\xi_{2^{\lceil n / 2\rceil}-2-\gamma}(H_n^4)=\xi_{2^{\lceil n/2\rceil}}(H_n^4)$.
For  odd $n$, by calculating, we have $\xi_{2^{\lceil n/2\rceil}-3}(H_n^4)=(n+1)\left(2^{\lceil n/2\rceil}-3\right)-e x_{2^{\lceil n/2\rceil}-3}\left(H_n^4\right)=(n+1)\left(2^{\lceil n/2\rceil}-3\right)-e x_{2^{\lceil n/2\rceil}-3}\left(Q_n\right)+2^{\lceil n/2\rceil}-3-1)
=\lfloor n/2\rfloor 2^{\lceil n/2\rceil}=\xi_{2^{\lceil n/2\rceil}}(H_n^4)$.
For   even $n$, $\xi_{2^{\lceil n/2\rceil-2}}(H_n^4)=\xi_{2^{\lceil n/2\rceil}}(H_n^4)$ is proved analogously.
The proof holds.
\end{proof}
Let $n$ be an integer with $n\geq3$. Assume that $t=0,1, \cdots,\lfloor n / 2\rfloor -1$, $g_t=\lceil\left(2^{2 t+2+\gamma} \right) / 3\rceil$.
To address the range $2^{\lceil n / 2\rceil+t}-g_t \leq m \leq 2^{\lceil n / 2\rceil+t}$  effectively, we refine the intervals by introducing a new subdivision.
For $t=0,1, \cdots, \lfloor n/2\rfloor-1$ and $0 \leq d_t \leq t+1$, the definition of function   $m_{t, d_t}$ is\\
\begin{math}
\begin{aligned}
m_{t, d_t}=\begin{cases}
2^{\lceil n / 2\rceil+t} &\text { if }d_t=0 ; \\
2^{\lceil n / 2\rceil+t}-\sum_{i=0}^{d_t-1} 2^{2 t-2 i+\gamma} &\text{ if }1 \leq d_t \leq t;\\
2^{\lceil n / 2\rceil+t}-\sum_{i=0}^{d_t-1} 2^{2 t-2 i+\gamma}-1 &\text { if } d_t=t+1.\end{cases}
\end{aligned}
\end{math}\\
\begin{figure}[!t]
\flushleft
\includegraphics[width=1\linewidth]{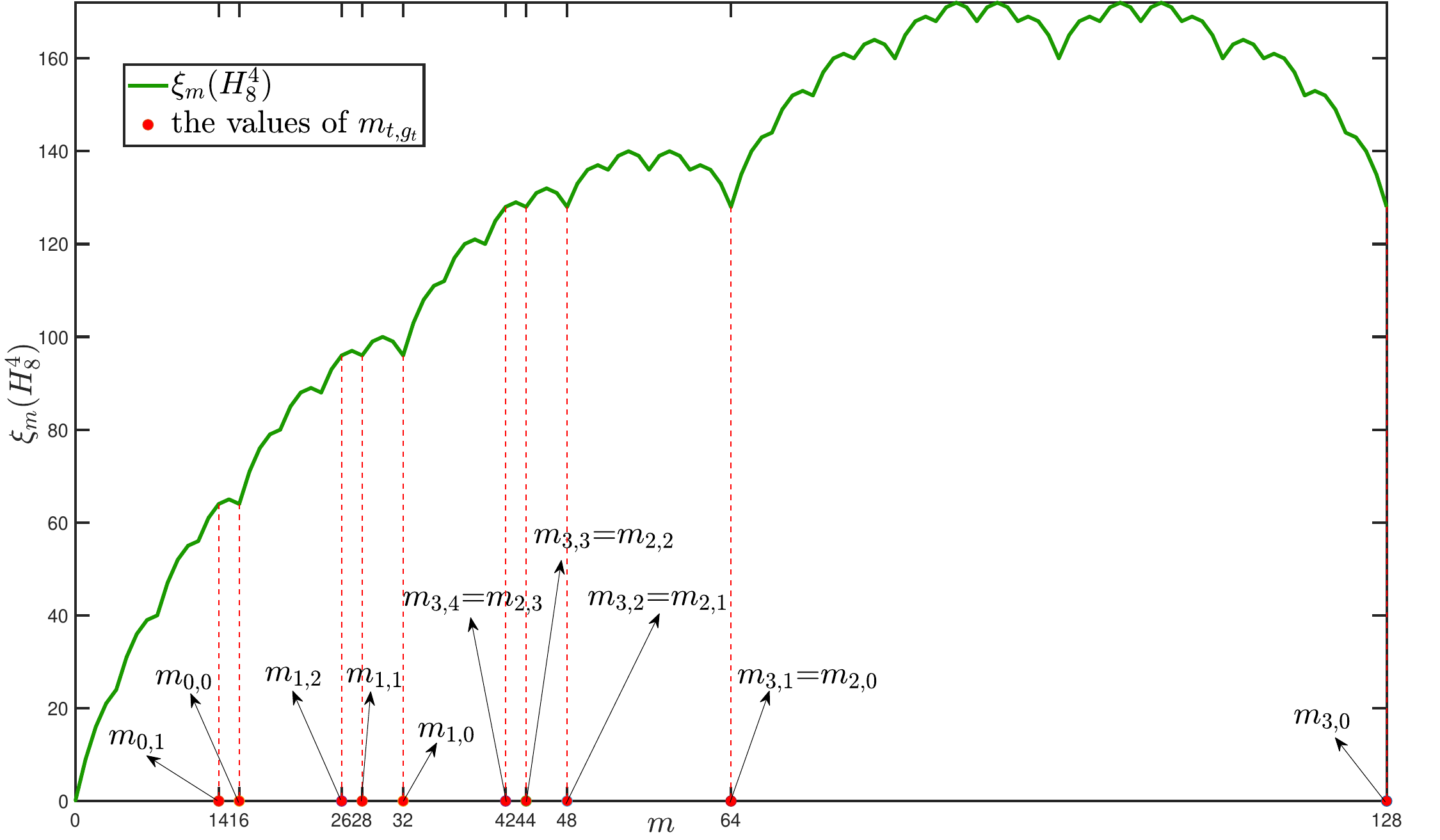}
		\caption{The values of $m_{t,g_t}$ for $n=8$.}
\label{figThevaluesofM_{r,k_r}}
\end{figure}
For a better understanding,
in Fig. \ref{figThevaluesofM_{r,k_r}}, all the values of $m_{t, d_t}$ for $n=8$ are marked on the $x$-axis with red dots, depending on the image of the function $\xi_m(H_8^4)$.

\begin{lem}\label{lem4.5}
Let $n$ be an integer with $n\geq3$. Suppose that $2^{\lceil n / 2\rceil+t}-g_t \leq m \leq 2^{\lceil n / 2\rceil+t}$, for $t=0,1, \cdots,\lfloor n / 2\rfloor -1$, $g_t=\lceil\left(2^{2 t+2+\gamma}\right) / 3\rceil$.
Then $\xi_m\left(H_n^4\right) \geq \xi_{2^{\lceil n / 2\rceil+t}}\left(H_n^4\right)$.
\end{lem}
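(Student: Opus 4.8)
The plan is to convert the inequality into an arithmetic statement about binary digit sums and to prove that by induction on $t$, the engine being a base‑$4$ self‑similarity of the thresholds $g_t$.

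First I would reduce to a single difference function. By (\ref{equa1}) and Theorem~\ref{thmex_m(H_n^4)=f(m)}, $\xi_m(H_n^4)=(n+1)m-f(m)$; substituting $ex_m(Q_n)=\sum_i t_i2^{t_i}+\sum_i 2i\,2^{t_i}$ from Lemma~\ref{lem2.3}(1) into the definition of $f$ and writing $m=4p+q$ gives $\xi_m(H_n^4)=\bigl(nm-ex_m(Q_n)\bigr)+\epsilon(m)$, where $\epsilon(m)$ depends only on $m\bmod4$, with $\epsilon(m)=0,1,2,1$ for $m\equiv0,1,2,3$. Setting $c=\lceil n/2\rceil+t$ and using $4\mid2^c$, Lemma~\ref{lem2.3}(1) gives $\xi_{2^c}(H_n^4)=(\lfloor n/2\rfloor-t)2^c$, so the claim is equivalent to $\Delta_t(j):=\xi_{2^c-j}(H_n^4)-\xi_{2^c}(H_n^4)\ge0$ for $0\le j\le g_t$. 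Telescoping the one‑step differences $\xi_m(H_n^4)-\xi_{m-1}(H_n^4)$ from $m=2^c$ down to $m=2^c-j$ --- using Lemma~\ref{lem2.3}(2) for the hypercube part, the identity $w(2^c-1-i)=c-w(i)$ for the binary digit sum $w(\cdot)$ with $0\le i<2^c$, and $2c-n=2t+\gamma$ --- the corrections $\epsilon$ telescope on their own, yielding
\[
\Delta_t(j)=(2t+\gamma)\,j-2B(j)+\epsilon(2^{c}-j),\qquad B(j):=\sum\nolimits_{i=0}^{j-1}w(i),
\]
where $\epsilon(2^{c}-j)$ depends only on $j\bmod4$ (values $0,1,2,1$ for $j\equiv0,1,2,3$); in particular $\Delta_t(j)-\Delta_{t-1}(j)=2j$.

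Next I would run the induction on $t$, after recording two elementary digit‑sum facts: $B(2^k+j')=k2^{k-1}+j'+B(j')$ for $0\le j'<2^k$, and, for $j=\sum_{i=0}^{r}2^{k_i}$ with $k_0>\cdots>k_r\ge0$, the identity $(2t+\gamma)j-2B(j)=\sum_{i=0}^{r}2^{k_i}\bigl(2t+\gamma-k_i-2i\bigr)$. The base case $t=0$ (so $g_0=2+\gamma$) is exactly Lemma~\ref{lem4.2}(2). For $t\ge1$ put $a=2t+\gamma$, so $a\ge2$; a short ceiling computation establishes the self‑similarity $g_t=g_{t-1}+2^{a}$, and $g_{t-1}=\lceil2^{a}/3\rceil<2^{a}$. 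Take any $j$ with $0\le j\le g_t$. If $j<2^{a}$, the leading exponent satisfies $k_0\le a-1$; then $k_i\le k_0-i$, and the second identity with $\sum_{i\ge1}(i-1)2^{-i}=1$ gives $\sum_i2^{k_i}(a-k_i-2i)\ge2^{k_0}(a-k_0)-2^{k_0}=2^{k_0}(a-k_0-1)\ge0$, so $\Delta_t(j)\ge0$ since $\epsilon\ge0$. If $2^{a}\le j\le g_t$, write $j=2^{a}+j'$ with $0\le j'=j-2^{a}\le g_t-2^{a}=g_{t-1}<2^{a}$; since $2^{a}\equiv0\pmod4$ the correction is unchanged, and the first identity gives $\Delta_t(2^{a}+j')=2^{a}(a-a)-2j'+\Delta_t(j')=\Delta_t(j')-2j'=\Delta_{t-1}(j')\ge0$ by the induction hypothesis (the last step using $\Delta_t(j')-\Delta_{t-1}(j')=2j'$ and $j'\le g_{t-1}$). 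This closes the induction.

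The hard part will be the first step's bookkeeping --- pushing the mod‑$4$ corrections through the telescoping so the clean formula for $\Delta_t$ survives --- together with nailing the self‑similar identity $g_t=g_{t-1}+2^{2t+\gamma}$ that drives the induction. That identity is equivalent to $g_t=\lceil2^{2t+2+\gamma}/3\rceil$ being the sharp threshold (the bound is tight, $\Delta_t(g_t)=0$), and it also explains why the subdivision points $m_{t,d_t}$ of Fig.~\ref{figThevaluesofM_{r,k_r}} --- precisely the $m=2^{c}-j$ with $\Delta_t(j)=0$ --- organize the whole picture; the monotonicity statements of Lemmas~\ref{lem4.2} and \ref{lem4.4} serve as useful sanity checks but are not otherwise needed beyond the $t=0$ base case.
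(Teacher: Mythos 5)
Your proposal is correct; I checked the reduction $\xi_m(H_n^4)=nm-ex_m(Q_n)+\epsilon(m)$, the formula $\Delta_t(j)=(2t+\gamma)j-2B(j)+\epsilon(2^{c}-j)$ (which follows since $ex_j(Q_n)=2B(j)$ by telescoping Lemma~\ref{lem2.3}(2), and $w(2^c-1-i)=c-w(i)$), the ceiling identity $g_t=g_{t-1}+2^{2t+\gamma}$, and both branches of the induction, and they all hold. The route is genuinely different in organization from the paper's, though the arithmetic core is shared. The paper proves the inequality by a direct two-case computation with no induction on $t$: it introduces the subdivision points $m_{t,d_t}=2^{c}-\sum_{i=0}^{d_t-1}2^{2t-2i+\gamma}$, peels the complement $2^{c}-m$ apart along this entire grid in one pass using Lemma~\ref{lem2.3}(3), and arrives at $\xi_m(H_n^4)=\xi_{2^{c}}(H_n^4)+\xi_u(Q_{2t-2d_t+\gamma})+q\ge\xi_{2^{c}}(H_n^4)$, where nonnegativity is immediate because $\xi_u$ of a hypercube is the cardinality of an edge cut. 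Your induction replaces the explicit grid by the single self-similarity $g_t=g_{t-1}+2^{2t+\gamma}$, which the paper never isolates but which is exactly what makes its grid work; what your version buys is a transparent explanation of where the thresholds $g_t$ come from and of the sharpness $\Delta_t(g_t)=0$, at the cost of some bookkeeping the paper avoids. One economy you could take: in the case $j<2^{a}$ your digit-sum estimate $\sum_i 2^{k_i}(a-k_i-2i)\ge 2^{k_0}(a-k_0-1)\ge 0$ is re-deriving the trivial fact that $aj-2B(j)=aj-ex_j(Q_a)=\xi_j(Q_a)=|[L_j^{a},\overline{L_j^{a}}]|\ge 0$ for $j\le 2^{a}$, which is precisely the device the paper invokes at the corresponding step.
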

\begin{proof}
It has already been proven in Lemma~\ref{lem4.2} for $t=0$. The discussion below pertains to the case for $t \geq 1$. One can check that $m_{t, d_t}-m_{t, d_t+1}=2^{2 t-2 d_t+\gamma}$ for $0 \leq d_t \leq t-1,~m_{t,t}-m_{t, t+1}=2+\gamma$ for $d_t=t$ and $m_{t, t+1}=2^{\lceil n / 2\rceil+t}-g_t$.

By calculation, for $q=0$, $1$ and $2$, it can be obtained that
\\$
~~\xi_m(H_n^4)\\
=(n+1) m-e x_m\left(H_n^4\right)\\
=(\lfloor n/2\rfloor-t+1)m+\left(\lceil n/2\rceil+t\right)m-ex_m(H_n^4)\\
=\!(\lfloor n/2\rfloor-t+1)m+(\lceil n/2\rceil+t)m-\!ex_m(Q_{\lceil n/2\rceil+t})\!\!-m+q\\
=(\lfloor n/2\rfloor-t)m+(\lceil n/2\rceil+t)(2^{\lceil n/2\rceil+t}-m)\\
~~-ex_{2^{\lceil n/2\rceil+r}-m}(Q_{\lceil n/2\rceil+t})+q\\
=(\lfloor n/2\rfloor-t)2^{\lceil n/2\rceil+t}-(\lfloor n/2\rfloor-t)(2^{\lceil n/2\rceil+t}-m)\\
~~+(\lceil n/2\rceil+t)(2^{\lceil n/2\rceil+t}-m)
-ex_{2^{\lceil n/2\rceil+t}-m}(Q_{\lceil n/2\rceil+t})
+q\\
=\xi_{m_{t,0}}(H_n^4)+(2t+f)(2^{\lceil n/2\rceil+t}-m)\\
~~-ex_{2^{\lceil n/2\rceil+t}-m}(Q_{\lceil n/2\rceil+t})+q.
$

For $q=3$, we can determine that\\
$~~\xi_m(H_n^4)
=(n+1)m-ex_m(Q_n)-m+1
=\xi_{m_{t,0}}(H_n^4)+(2t+\gamma)(2^{\lceil n/2\rceil+t}-m)
-ex_{2^{\lceil n/2\rceil+t}-m}(Q_{\lceil n/2\rceil+t})+1.
$

Next, we  calculate $ex_{2^{\lceil n/2\rceil+t}-m}(Q_{\lceil n/2\rceil+t})$.
\par\textbf{Case 1.} $m_{t,t}\leq m\leq m_{t,0}$.
\par In this case, assume $u=m_{t,d_t}-m$.\\
$~~ex_{2^{\lceil n/2\rceil+t}-m}(Q_{\lceil n/2\rceil+t})\\
=ex_{2^{\lceil n/2\rceil+t}-m_{t,d_t}}(Q_{\lceil n/2\rceil+t})+ex_{u}(Q_{\lceil n/2\rceil}+t)+2d_tu\\
=\sum_{i=0}^{d_t-1}(2t-2i+\gamma)2^{2t-2i+\gamma}+\sum_{i=0}^{d_t-1}2i2^{2t-2i+\gamma}\\
~~+ex_{u}(Q_{\lceil n/2\rceil}+t)+2d_tu\\
=(2t+\gamma)\sum_{i=0}^{d_t-1}2^{2t-2i+\gamma}+ex_u(Q_{\lceil n/2\rceil}+t)+2d_tu\\
=(2t+\gamma)(2^{\lceil n/2\rceil+t}-m_{t,d_t})+ex_u(Q_{\lceil n/2\rceil}+t)+2d_tu.
$

 \par\textbf{Subcase 1.1.} $q=0,\ 1$ and $2$.
 \par Combining the above formulas, we can obtain that\\
 $~~\xi_m(H_n^4)\\
 =(n+1)m-ex_m(H_n^4)\\
=\xi_{m_{t,0}}(H_n^4)+(2t+\gamma)(m_{t,d_t}-m)-ex_{u}(Q_{\lceil n/2\rceil+t})\\
~~-2d_tu+q\\
=\xi_{m_{t,0}}(H_n^4)+(2t-2d_t+\gamma)u-ex_{u}(Q_{2t-2d_t+\gamma})+q\\
=\xi_{m_{t,0}}(H_n^4)+\xi_u(Q_{2t-2d_t+\gamma})+q\\
\geq \xi_{m_{t,0}}(H_n^4).
 $
 \par For the last inequality, the equality is achieved if and only if  $u=0$ or $u=2^{2t-2d_t+\gamma}$.
\par\textbf{Subcase 1.2.} $q=3$.
 \par Similar to Subcase 1.1, we have\\
 $~~\xi_m(H_n^4)\\
=\xi_{m_{t,0}}(H_n^4)+(2t+\gamma)(m_{t,d_t}-m)-ex_{u}(Q_{\lceil n/2\rceil+t})\\
~~-2d_tu+1\\
\geq \xi_{m_{t,0}}(H_n^4).
 $
 \par For the last inequality, the equality is achieved if and only if  $u=0$ or $u=2^{2t-2d_t+\gamma}$.

\par\textbf{Case 2.} $m_{t,t+1}\leq m\leq m_{t,t}$.
\par In this case, assume $v=m_{t,t}-m$.\\
$~~ex_{2^{\lceil n/2\rceil+t}-m}(Q_{\lceil n/2\rceil+t})\\
=ex_{2^{\lceil n/2\rceil+t}-m_{t,t}}(Q_{\lceil n/2\rceil+t})+ex_{v}(Q_{\lceil n/2\rceil+t})+2tv\\
=\sum_{i=0}^{t-1}(2t-2i+f)2^{2t-2i+f}+\sum_{i=0}^{t-1}2i2^{2t-2i+f}\\
~~+ex_{v}(Q_{\lceil n/2\rceil+t})+2tv\\
=(2t+\gamma)\sum_{i=0}^{t-1}2^{2t-2i+\gamma}+ex_v(Q_{\lceil n/2\rceil}+t)+2tv\\
=(2t+\gamma)(2^{\lceil n/2\rceil+t}-m_{t,t})+ex_v(Q_{\lceil n/2\rceil}+t)+2tv.
$
 \par\textbf{Subcase 2.1.} $q=0,\ 1$ and $2$.
  \par By the above formulas, \\
 $~~\xi_m(H_n^4)\\
=(n+1)m-ex_m(H_n^4)\\
=\xi_{m_{t,0}}(H_n^4)+(2t+\gamma)(m_{t,t}-m)-ex_{v}(Q_{\lceil n/2\rceil+t})-2tv+q\\
=\xi_{m_{t,0}}(H_n^4)+(2t-2t+\gamma)v-ex_{v}(Q_{2+\gamma})+q\\
=\xi_{m_{t,0}}(H_n^4)+\xi_v(Q_{2+\gamma})-2v+q\\
\geq \xi_{m_{t,0}}(H_n^4).
 $
 \par For the last inequality, the equality holds if and only if  $v=0$ or $v=2+\gamma$.
\par\textbf{Subcase 2.2.} $q=3$.
 \par By using the above formulas, then\\
 $~~\xi_m(H_n^4)\\
=\xi_{m_{t,0}}(H_n^4)+(2t+\gamma)(m_{t,t}-m)-ex_{v}(Q_{\lceil n/2\rceil+t})-2tv+1\\
=\xi_{m_{t,0}}(H_n^4)+\xi_v(Q_{2+\gamma})-2v+1\\
\geq \xi_{m_{t,0}}(H_n^4).
 $
 \par For the last inequality, the equality is achieved if and only if  $v=0$ or $v=2+\gamma$.

 In summary, $\xi_{m_{t,d_t}}(H_n^4)=\xi_{m_{t,0}}(H_n^4)=\xi_{m_{t,d_t+1}}(H_n^4)$. The proof is completed.
\end{proof}

\section{Application to five link fault patterns}\label{sec5}
We  prove the main theorems by applying the optimal solution of the edge isoperimetric problem to evaluate the reliability
of the generalized $K_4$-hypercubes,  and subsequently present the data vividly with the aid of tables and  graphs.
\subsection{The $h$-extra edge-connectivity of $\mathscr{H}_n^4$}
In this subsection,  the function $\lambda_h(H_n^4)$ through a monotonically increasing interval and  main intervals of concentration phenomena are explored.
\begin{figure*}[!t]
	\flushleft
\includegraphics[width=1\linewidth]{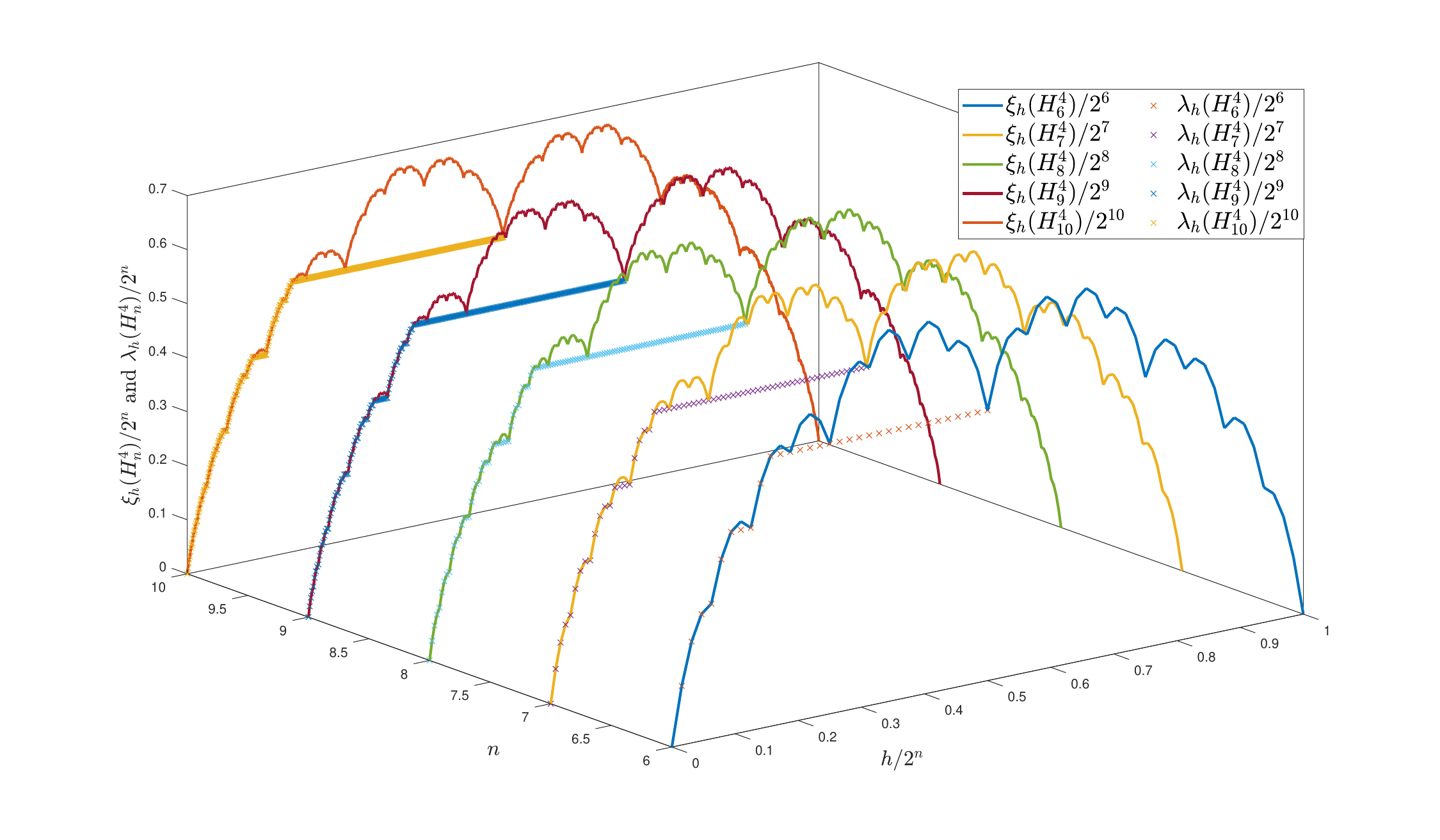}
		\caption{The normalized values of $\xi_h(H_n^4) $ and $\lambda_h(H_n^4) $ for $6\leq n\leq10$.}
\label{fignormalized values}
\end{figure*}

\subsubsection{A monotonically increasing range of function $\lambda_h(H_n^4)$}
\noindent\textbf{The proof of Theorem~\ref{thm1}}\quad
For $n=3$,  $2^{\lceil n/2\rceil}-2-\gamma=1$. Thus, $\lambda_1(H_n^4)=\min\left\{\xi_m(H_n^4): h \leq m \leq 1\right\}=\xi_1(H_n^4)$.

For $n\geq4$, $1\leq h\leq 2^{\lceil n/2\rceil}-2-\gamma$, by formula (\ref{equa2}), Lemmas \ref{lem4.2}, and \ref{lem4.4}, it suffices to have
\\$
~~\lambda_h(H_n^4)\\
=\min\Big\{\xi_m(H_n^4):h\leq m\leq2^{n-1}\Big\}\\
=\min\Big\{\xi_m(H_n^4):h\leq m\leq2^{\lceil n/2\rceil}\Big\}\\
=\min\Big\{\xi_m(H_n^4): h \leq m \leq 2^{\lceil n/2\rceil}-2-\gamma\Big\}\\
=\xi_h(H_n^4)$.
\par The proof is finished.
$\hfill\blacksquare$
\subsubsection{Main intervals of concentration phenomena of $\lambda_h(H_n^4)$}
\textbf{The proof of Theorem~\ref{thm2}}\quad
For $n\geq3$ and $2^{\lceil n/2\rceil+t}-g_t\leq h\leq2^{\lceil n/2\rceil+t}$, by using the Formula (\ref{equa2}), Lemmas \ref{lem4.4} and \ref{lem4.5}, we have\\$
~~\lambda_h(H_n^4)\\
=\min\Big\{\xi_m(H_n^4):h\leq m\leq2^{n-1}\Big\}\\
=\min\Big\{\xi_m(H_n^4):h\leq m\leq2^{\lceil n/2\rceil+t}\Big\}\\
=\xi_{2^{\lceil n/2\rceil+t}}(H_n^4)\\
=(\lfloor n/2\rfloor-t)2^{\lceil n/2\rceil+t}$.
\par The proof is finished.
$\hfill\blacksquare$
\begin{cor}\label{cor}
For $t=\lfloor n/2\rfloor-1$, we obtain that $\lambda_h(H_n^4)=2^{n-1}$, for $\lfloor 2^{n-1}/3\rfloor\leq h\leq 2^{n-1}$.
\end{cor}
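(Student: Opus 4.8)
The plan is to obtain Corollary~\ref{cor} as the instantiation $t=\lfloor n/2\rfloor-1$ of Theorem~\ref{thm2}, checking only that the index substitutions produce exactly the claimed range and constant. First I would simplify the two quantities $\lceil n/2\rceil+t$ and $\lfloor n/2\rfloor-t$ that occur in Theorem~\ref{thm2}: using $\lceil n/2\rceil+\lfloor n/2\rfloor=n$ one gets $\lceil n/2\rceil+t=\lceil n/2\rceil+\lfloor n/2\rfloor-1=n-1$ and $\lfloor n/2\rfloor-t=1$, so the value furnished by Theorem~\ref{thm2} collapses to $(\lfloor n/2\rfloor-t)2^{\lceil n/2\rceil+t}=1\cdot 2^{n-1}=2^{n-1}$, which is exactly the constant asserted in the corollary.

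Next I would rewrite the left endpoint $2^{\lceil n/2\rceil+t}-g_t$ of the interval. Since $2^{\lceil n/2\rceil+t}=2^{n-1}$, it remains to evaluate $g_t=\lceil 2^{2t+2+\gamma}/3\rceil$ at $t=\lfloor n/2\rfloor-1$. A short parity check handles the exponent: for even $n$ one has $\gamma=0$ and $\lfloor n/2\rfloor=n/2$, so $2t+2+\gamma=2(n/2-1)+2=n$; for odd $n$ one has $\gamma=1$ and $\lfloor n/2\rfloor=(n-1)/2$, so $2t+2+\gamma=2((n-1)/2-1)+2+1=n$. Hence $g_t=\lceil 2^n/3\rceil$ in both cases, and the left endpoint is $2^{n-1}-\lceil 2^n/3\rceil$. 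The final arithmetic point is the elementary identity $2^{n-1}-\lceil 2^n/3\rceil=\lfloor 2^{n-1}/3\rfloor$: writing $a=2^{n-1}=3q+r$ with $r\in\{0,1,2\}$ and noting $2^n=2a$, one verifies case by case that $\lceil 2a/3\rceil+\lfloor a/3\rfloor=a$ (the three residue classes give $2q+q$, $(2q+1)+q$, $(2q+2)+q$, respectively equal to $3q+r=a$), which rearranges to the needed equality. Therefore the hypothesis $2^{\lceil n/2\rceil+t}-g_t\le h\le 2^{\lceil n/2\rceil+t}$ of Theorem~\ref{thm2} becomes $\lfloor 2^{n-1}/3\rfloor\le h\le 2^{n-1}$, and its conclusion yields $\lambda_h(H_n^4)=2^{n-1}$ for all $n\ge4$.

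Since Theorem~\ref{thm2} is stated only for $n\ge4$, I would close with the base case $n=3$, where $t=\lfloor 3/2\rfloor-1=0$ and the claimed range is $1\le h\le 4=2^{n-1}$. Here I would argue directly from~(\ref{equa2}): a direct computation gives $\xi_1(H_3^4)=4$ and $\xi_{2^{\lceil 3/2\rceil}}(H_3^4)=\xi_4(H_3^4)=4$, while Lemma~\ref{lem4.2}(2) (with $2^{\lceil 3/2\rceil}-2-\gamma=1$) gives $\xi_m(H_3^4)>\xi_4(H_3^4)=4$ for $1<m<4$. Consequently $\lambda_h(H_3^4)=\min\{\xi_m(H_3^4):h\le m\le 4\}=4$ for every $h$ with $1\le h\le 4$, completing the proof.

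I do not expect a genuine obstacle: the statement is a corollary, and the only places calling for care are the parity bookkeeping that turns $2t+2+\gamma$ into $n$ and the floor/ceiling identity $2^{n-1}-\lceil 2^n/3\rceil=\lfloor 2^{n-1}/3\rfloor$, each dispatched by a three-line case distinction; the one mild subtlety is remembering to treat $n=3$ separately, as it lies outside the range of Theorem~\ref{thm2}.
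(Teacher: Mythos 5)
Your proposal is correct and follows essentially the same route as the paper: both obtain the corollary by instantiating Theorem~\ref{thm2} at $t=\lfloor n/2\rfloor-1$ and verifying that the left endpoint equals $\lfloor 2^{n-1}/3\rfloor$, the only difference being that you compute $g_t=\lceil 2^n/3\rceil$ directly and use the residue identity $2^{n-1}-\lceil 2^n/3\rceil=\lfloor 2^{n-1}/3\rfloor$, whereas the paper reaches the same number via the binary expansion of $m_{\lfloor n/2\rfloor-2,\lfloor n/2\rfloor-1}$ and the coincidence $m_{\lfloor n/2\rfloor-2,\lfloor n/2\rfloor-1}=m_{\lfloor n/2\rfloor-1,\lfloor n/2\rfloor}$. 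Your separate treatment of $n=3$ is a sensible extra precaution that the paper omits.
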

\begin{proof}
In fact,
through calculation, if $t=\lfloor n/2\rfloor-2$ and $n$ is even, we have
 $m_{\lfloor n/2\rfloor-2,\lfloor n/2\rfloor-1}=2^{\lceil n/2\rceil+\lfloor n/2\rfloor-2}-\sum_{i=0}^{\lfloor n/2\rfloor-2}2^{n-4-2i}-1=2^{n-2}-2^{n-4}-2^{n-6}-\cdots-2^{2}-2^{0}-1=2^{n-2}-2^{n-4}-2^{n-6}-\cdots-2^{2}-2^{1}$ and $2m_{\lfloor n/2\rfloor-2,\lfloor n/2\rfloor-1}=2^{n-1}-2^{n-3}-2^{n-5}-\cdots-2^{3}-2^{2}-1$. Then
 $3m_{\lfloor n/2\rfloor-2,\lfloor n/2\rfloor-1}=2^{n-1}+2^{n-2}-2^{n-3}-2^{n-4}-\cdots-2^{3}-2^{2}-2^{2}-2=2^{n-1}-2$. So
 $m_{\lfloor n/2\rfloor-2,\lfloor n/2\rfloor-1}=(2^{n-1}-2)/3=\lfloor 2^{n-1}/3\rfloor$.
Similarly, if $t=\lfloor n/2\rfloor-2$ and $n$ is odd, we have
 $m_{\lfloor n/2\rfloor-2,\lfloor n/2\rfloor-1}=(2^{n-1}-1)/3=\lfloor 2^{n-1}/3\rfloor$.
In consequence, $m_{\lfloor n/2\rfloor-2,\lfloor n/2\rfloor-1}=\lfloor 2^{n-1}/3\rfloor$.
Furthermore, it can be  acquired that $m_{\lfloor n/2\rfloor-2,\lfloor n/2\rfloor-1}=m_{\lfloor n/2\rfloor-1,\lfloor n/2\rfloor}$. Based on Theorem \ref{thm2}, it can be derived that $\lambda_h(H_n^4)=2^{n-1}$ for $\lfloor 2^{n-1}/3\rfloor\leq h\leq 2^{n-1}$.
\end{proof}
By Corollary \ref{cor},
$m_{\lfloor n/2\rfloor-2,\lfloor n/2\rfloor-1}$$=m_{\lfloor n/2\rfloor-1,\lfloor n/2\rfloor}=\lfloor 2^{n-1}/3\rfloor$. Thus, it can be seen that $\left[\lfloor 2^{n-1}/3\rfloor,2^{n-2}\right]\subseteq\left[\lfloor 2^{n-1}/3\rfloor,2^{n-1}\right]$.
Therefore, we  disregard the scenario for $t=\lfloor n/2\rfloor-2$.
\begin{rem}
The lower and upper bounds of $h$ in $\lambda_h(H_n^4)$ are both sharp for $2^{\lceil n/2\rceil+t}-g_t\leq h\leq2^{\lceil n/2\rceil+t}$, $t=0,1,\cdots,\lfloor n/2\rfloor-3,\lfloor n/2\rfloor-1$.
\end{rem}
(1)
On the one hand, if $n$ is even, according to the definition of the function $m_{t,d_t}$, it follows that $m_{t,t+1}=2^{\lceil n/2\rceil+t}-\sum_{i=0}^{t}2^{2t-2i}-1$. Note that $2^{\lceil n/2\rceil+t}=2^{\lceil n/2\rceil+t-1}+2^{\lceil n/2\rceil+t-2}+\cdots+2^2+2^1+2^0+2^0$ contains $(\lceil n/2\rceil+t+1)$ terms and $\sum_{i=0}^{t}2^{2t-2i}=2^{2t}+2^{2t-2}+\cdots+2^2+2^0$ contains $(t+1)$ terms. Then $m_{t,t+1}$ contains $(\lceil n/2\rceil-1)$ terms and the remainder of $m_{t,t+1}$ divided by $4$ is $2$. It can be observed that $m_{t,t+1}-1$ contains $(\lceil n/2\rceil-1)$ terms and the remainder of $m_{t,t+1}-1$ divided by $4$ is $1$.
Then,\\
$~~\xi_{m_{t,t+1}}(H_n^4)-\xi_{m_{t,t+1}-1}(H_n^4)\\
=(n+1)m_{t,t+1}-ex_{m_{t,t+1}}(H_n^4)-\left[(n+1)(m_{t,t+1}-1)\right.\\
~~\left.-ex_{m_{t,t+1}-1}(H_n^4)\right]\\
=n+1-\left[ex_{m_{t,t+1}}(H_n^4)-ex_{m_{t,t+1}-1}(H_n^4)\right]\\
=n+1-\left[ex_{m_{t,t+1}}(Q_n)+{m_{t,t+1}}-2-ex_{m_{t,t+1}-1}(Q_n)\right.\\
~~\left.+{m_{t,t+1}-1}-1\right]\\
=n+1-\left[ex_{m_{t,t+1}}(Q_n)-ex_{m_{t,t+1}-1}(Q_n)\right]\\
=n+1-\left[2(\lceil n/2\rceil-2)+2\right]\\
=n+1-(n-2)
=3>0.
$

On the other hand, it can be calculated that $m_{t,t+1}=2^{\lceil n/2\rceil+t}-\sum_{i=0}^{t}2^{2t-2i+1}-1$ for odd $n$. Note that $\sum_{i=0}^{t}2^{2t-2i+1}=2^{2t+1}+2^{2t-1}+\cdots+2^3+2^1$ contains $(t+1)$ terms. Then $m_{t,t+1}$ contains $(\lceil n/2\rceil-1)$ terms and the remainder of $m_{t,t+1}$ divided by $4$ is $1$. It can be observed that $m_{t,t+1}-1$ contains $(\lceil n/2\rceil-2)$ terms and is divisible by $4$.
Then,\\
$~~\xi_{m_{t,t+1}}(H_n^4)-\xi_{m_{t,t+1}-1}(H_n^4)\\
=(n+1)m_{t,t+1}-ex_{m_{t,t+1}}(H_n^4)-\left[(n+1)(m_{t,t+1}-1)\right.\\
\left.-ex_{m_{t,t+1}-1}(H_n^4)\right]\\
=n+1-\left[ex_{m_{t,t+1}}(H_n^4)-ex_{m_{t,t+1}-1}(H_n^4)\right]\\
=n+1-\left[ex_{m_{t,t+1}}(Q_n)+{m_{t,t+1}}-1-ex_{m_{t,t+1}-1}(Q_n)\right.\\
~~\left.+{m_{t,t+1}-1}\right]\\
=n+1-\left[ex_{m_{t,t+1}}(Q_n)-ex_{m_{t,t+1}-1}(Q_n)\right]\\
=n+1-\left[2(\lceil n/2\rceil-3)+2\right]\\
=n+1-(n-3)
=4>0.
$
\begin{figure}[!t]
	\flushleft
\includegraphics[width=1\linewidth]{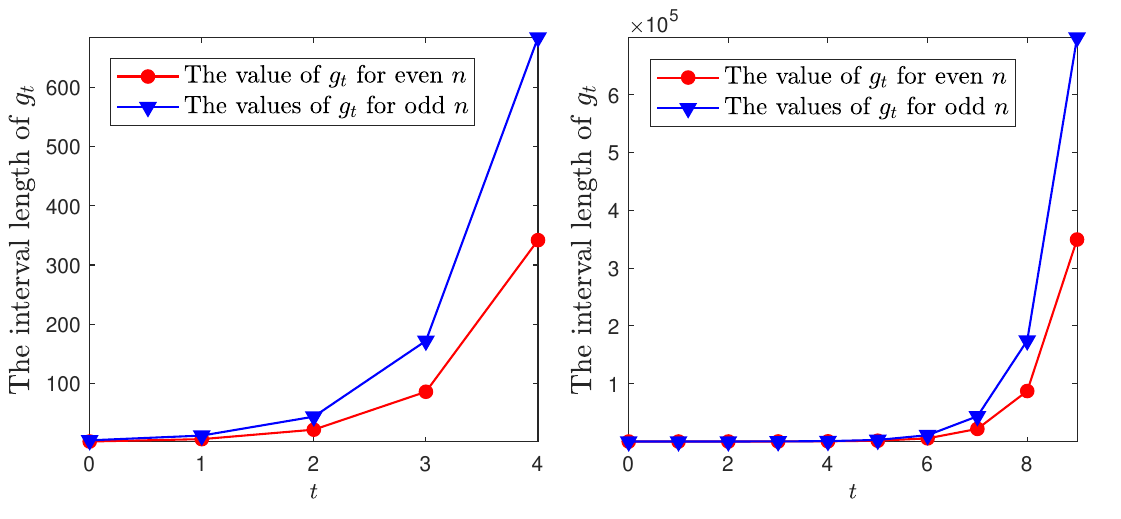}
		\caption{The interval lengths $g_t$ of the concentration phenomena for $t=0$ to $9$.}
\label{figgt}
\end{figure}

Therefore, $\xi_{m_{t,t+1}}(H_n^4)>\xi_{m_{t,t+1}-1}(H_n^4)$. By Lemma \ref{lem4.5}, $\xi_{m_{t,t+1}}(H_n^4)<\xi_{m_{t,t+1}+1}(H_n^4)$.
Thus, $\lambda_{m_{t,t+1}-1}(H_n^4)<\lambda_{m_{t,t+1}}(H_n^4)=\lambda_{m_{t,t+1}+1}(H_n^4)$.
Hence, the lower bound of $h$ is sharp for each $t=0,1,\cdots,\lfloor n/2\rfloor-3,\lfloor n/2\rfloor-1$.

(2) For $t=0,1,2,\cdots,\lfloor n/2\rfloor-3$, $m_{t,0}=2^{\lceil n/2\rceil+t}$ and $m_{t,0}-1=2^{\lceil n/2\rceil+t}-1$.
By Lemma \ref{lem4.4}, $\xi_h(H_n^4)>\xi_{m_{t,0}}(H_n^4)$ for $m_{t,0}<h\leq 2^{n-1}$.
By Lemma \ref{lem4.5}, $\xi_{m_{t,0}-1}(H_n^4)>\xi_{m_{t,0}}(H_n^4)$.
Thus, $\lambda_{m_{t,0}-1}(H_n^4)>\lambda_{m_{t,0}}(H_n^4)=\lambda_{m_{t,0}+1}(H_n^4)$.
Since $m_{\lfloor n/2\rfloor-1,0}=2^{n-1}=|V(H_n^4)|/2$, it can be seen that the upper bound is sharp for $t=\lfloor n/2\rfloor-1$.
Consequently, the remark holds.

Now, the data and images are utilized to convey the findings more intuitively.
According to Formulas (\ref{equa1}) and (\ref{equa2}), Theorems \ref{thm1}, \ref{thm2} and \ref{thmex_m(H_n^4)=f(m)}, the calculated values of $\xi_h(H_n^4)$ and $\lambda_h(H_n^4)$ within the range $3 \leq n \leq 7$ and $1 \leq h \leq 2^{n-1}$ are displayed in Table \ref{tab1}.
The normalized values of $\xi_h(H_n^4) $ and $\lambda_h(H_n^4) $ for $6\leq n\leq10$ are shown in Fig.~\ref{fignormalized values}. From the image, we can visually observe that the function $\xi_h(H_n^4) $ exhibits symmetry and self-similarity, and $\lambda_h(H_n^4)$   exhibits some  concentration behavior for some exponentially large enough $h$ on the interval length of $g_t$,  for $t=0,1, \cdots,\lfloor n / 2\rfloor -1$, $g_t=\lceil\left(2^{2 t+2+\gamma}+2\right) / 3\rceil$.
Furthermore, for $t=0$ to $t=9$,  the values of $g_t$ calculated in this paper are displayed in Fig. \ref{figgt}. These values precisely represent the lengths of all intervals of  concentration   phenomena, which increase exponentially with the increase of $t$.
\subsection{Other four link fault patterns}
In this subsection, we explore $\lambda(\mathscr{P}_i^l,~H_n^4 )$ for $1\leq i\leq5$, $2\leq l\leq n-1$ and $n\geq 2$ by means of $\xi_h(H_n^4)$. Firstly, the following three lemmas offer an upper bound and a lower bound of $\lambda(\mathscr{P}_i^l,~H_n^4 )$ for $1\leq i\leq4$.
\begin{table*}[!t]
\caption{The Exact Values of $\xi_h(H_n^4)$ and $\lambda_h(H_n^4)$ with $3\leq n\leq 7$ and $1\leq h\leq 2^{n-1}$}
\label{tab1}
\begin{center}
\begin{tabular}{c c c c c c c c c c c c c c c c c c c c c c c}
\toprule
$h$  &1&2&3&4&5&6&7&8&9&10&11&12&13&14&15&16&17&18&19&20&21&22\\
\hline
$\xi_h(H_3^4)$ & 4&6&6&4\\
$\lambda_h(H_3^4)$ &4&4&4&4\\
$\xi_h(H_4^4)$ & 5  &   8   &  9  &   8  &  11  &  12  &  11  &   8\\
$\lambda_h(H_4^4)$  & 5  &   8 &    8   &  8   &  8  &   8  &   8  &   8\\
$\xi_h(H_5^4)$ &6  &  10   & 12 &   12  &  16   & 18&    18  &  16  &  20 &   22& 22 &   20 &   22 &   22 &   20 &   16\\
$\lambda_h(H_5^4)$ & 6  &  10  &  12  &  12  &  16  &  16  &  16  &  16  &  16  &  16 &16   & 16  &  16   & 16  &  16  &  16\\
$\xi_h(H_6^4)$ & 7  &  12   & 15  &  16   & 21  &  24  &  25  &  24   & 29  &  32 & 33 &   32  &  35 &   36  &  35   & 32 &   37  &  40  &  41  &  40  &43 &   44\\
$\lambda_h(H_6^4)$ & 7  &  12  &  15&    16 &   21 &   24   & 24  &  24  &  29  &  32 &32  &  32  &  32  &  32  &  32 &   32&    32 &   32  &  32   & 32     &32   & 32 \\
 $\xi_h(H_7^4)$ &8   & 14  &  18   & 20 &   26  &  30   & 32   & 32  &  38 &   42   & 44 &   44 &   48    &50  &  50  &  48  &  54   & 58  &  60   & 60  &  64 & 66\\
$\lambda_h(H_7^4)$ &  8    &14  &  18  &  20 &   26 &   30   & 32 &   32  &  38  &  42   & 44  &  44   & 48   & 48  &  48 &   48   & 54    & 58  &  60  &  60  &  64 & 64  \\
\hline
$h$& 23 &24& 25 &26 &27 &28 &29& 30 & 31 &32& 33& 34& 35& 36& 37& 38& 39& 40& 41& 42& 43& 44\\
\hline
$\xi_h(H_6^4)$& 43   & 40   & 43  &  44 &   43 &   40 &   41  &  40 &   37  &  32\\
$\lambda_h(H_6^4)$ & 32    &32  &  32  &  32   & 32    &32 &   32 &   32  &  32  &  32\\
$\xi_h(H_7^4)$ & 66  &  64  &  68   & 70&    70 &   68  &  70  &  70   & 68  &  64  &  70   & 74 &   76  &  76  &  80 &   82  &  82 &   80   & 84 &   86 & 86 &   84\\
$\lambda_h(H_7^4)$ & 64 &   64 &   64  &  64   & 64 &   64 &   64  &  64  &  64 &   64    &64  &  64 &   64  &  64  &  64   & 64  &  64   & 64 &   64  &  64   &  64&    64 \\
\hline
$h$ &45& 46& 47& 48 &49 &50 &51 &52 &53 &54& 55 &56 &57 &58& 59& 60 &61& 62& 63 &64 \\
\hline
$\xi_h(H_7^4)$   &  86  &  86   & 84  &  80  &  84  &  86  &  86 &   84  &  86   & 86    &84   & 80   & 82  &  82   & 80 &   76  &  76 &   74 &   70 & 64 \\
$\lambda_h(H_7^4)$ &  64 &   64   & 64  &  64  &  64  &  64  &  64&    64   & 64&    64  &  64  &  64  &  64  &  64 &   64   & 64   & 64  &  64 &   64&   64   \\
\bottomrule
\end{tabular}
\end{center}
\end{table*}
\begin{table*}[!t]
\caption{The Exact Values of $\lambda(\mathscr{P}_i^l,~H_3^4)$  for $3\leq n\leq 7$ and $1\leq i\leq 4$}
\label{tab2}
\begin{center}
\begin{tabular}{c c c c c c}
\toprule
$l$& 2 &3 &4&5&6\\
\hline
$\lambda(\mathscr{P}_i^l,~H_3^4)$ &$(3-2)\times2^2=4$ \\
$\lambda(\mathscr{P}_i^l,~H_4^4)$ & $(4-2)\times2^2=8$ &$(4-3)\times2^3=8$  \\
$\lambda(\mathscr{P}_i^l,~H_5^4)$ &$(5-2)\times2^2=12$ &$(5-3)\times2^3=16$ &$(5-4)\times2^4=16$ \\
$\lambda(\mathscr{P}_i^l,~H_6^4)$ &$(6-2)\times2^2=16$ &$(6-3)\times2^3=24$ & $(6-4)\times2^4=32$ & $(6-5)\times2^5=32$ \\
$\lambda(\mathscr{P}_i^l,~H_7^4)$& $(7-2)\times2^2=20$ &$(7-3)\times2^3=32$ &$(7-4)\times2^4=48$  & $(7-5)\times2^5=64$ & $(7-6)\times2^6=64$\\
\bottomrule
\end{tabular}
\end{center}
\end{table*}
\begin{lem}\label{lem6.1}
Let $n,\,l,\,i$ be integers, with $n\geq 2$, $2\leq l\leq n-1$, and $1\leq i\leq4$.
Then $\lambda(\mathscr{P}_i^l,~H_n^4 )\leq \xi_{2^l}(H_n^4)=(n-l)2^l$.
\end{lem}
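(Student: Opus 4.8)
The plan is to exhibit a single edge-cut of $H_n^4$ that simultaneously serves as a $\mathscr{P}_i^l$-edge-cut for every $i\in\{1,2,3,4\}$. The natural choice is $F=[L_{2^l}^n,\overline{L_{2^l}^n}]$, where $L_{2^l}^n$ is the canonical vertex set of size $2^l$ from Section~\ref{sec3}. Since $S_{2^l}=\{0,1,\dots,2^l-1\}$, the set $L_{2^l}^n$ consists of the $2^l$ binary strings whose top $n-l$ coordinates are all $0$, i.e.\ it is the $l$-dimensional sub-$H_n^4$ denoted $\underbrace{0\cdots0}_{n-l}X_lX_{l-1}\cdots X_1$; hence $H_n^4[L_{2^l}^n]$ is an $l$-dimensional $K_4$-hypercube and is $(l+1)$-regular. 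First I would evaluate $|F|$: each of the $2^l$ vertices of $L_{2^l}^n$ keeps $l+1$ neighbours inside and so has $(n+1)-(l+1)=n-l$ neighbours outside, whence $|F|=(n-l)2^l$. Equivalently, $ex_{2^l}(H_n^4)=f(2^l)=(l+1)2^l$ by Theorem~\ref{thmex_m(H_n^4)=f(m)}, so $\xi_{2^l}(H_n^4)=(n+1)2^l-(l+1)2^l=(n-l)2^l$; and $|F|=\xi_{2^l}(H_n^4)$ since both $H_n^4[L_{2^l}^n]$ and $H_n^4[\overline{L_{2^l}^n}]$ are connected (Section~\ref{sec3}), so $H_n^4-F$ has exactly these two components.

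The key structural observation, obtained by unwinding Definition~\ref{defi2} along the top $n-l$ coordinates, is that $V(H_n^4)$ partitions into $2^{n-l}$ pairwise disjoint $l$-dimensional sub-$H_n^4$'s, and $L_{2^l}^n$ is precisely one of these blocks. Therefore $\overline{L_{2^l}^n}$ is the union of the remaining $2^{n-l}-1$ blocks, so every vertex of $\overline{L_{2^l}^n}$ lies in an intact $l$-dimensional sub-$H_n^4$ contained wholly inside $\overline{L_{2^l}^n}$, none of whose internal edges belong to $F$. Using this I would check the four properties of $H_n^4-F$ in turn: $\mathscr{P}_4^l$ (embedded) holds because $H_n^4[L_{2^l}^n]$ is itself an intact $l$-dimensional subnetwork and every vertex of the complementary component lies in such a block; $\mathscr{P}_1^l$ and $\mathscr{P}_2^l$ follow because each vertex of either component retains its $l+1\ge l$ neighbours inside its own block, so both components have minimum degree (and hence average degree) at least $l$; and $\mathscr{P}_3^l$ ($2^l$-extra) is immediate from $|L_{2^l}^n|=2^l$ and $|\overline{L_{2^l}^n}|=2^n-2^l\ge 2^l$, using $l\le n-1$.

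Combining the two steps, $F$ is a $\mathscr{P}_i^l$-edge-cut of $H_n^4$ of size $(n-l)2^l=\xi_{2^l}(H_n^4)$ for every $i\in\{1,2,3,4\}$, and the definition of $\lambda(\mathscr{P}_i^l,\cdot)$ as the minimum cardinality of such a cut yields $\lambda(\mathscr{P}_i^l,H_n^4)\le\xi_{2^l}(H_n^4)=(n-l)2^l$. I do not anticipate a serious obstacle here; the only slightly delicate point is making the partition of $V(H_n^4)$ into $l$-dimensional sub-$H_n^4$'s precise and confirming it behaves at the endpoints $l=2$ and $l=n-1$, but both are handled by the same evaluation of $ex_{2^l}(H_n^4)$ and the same block argument.
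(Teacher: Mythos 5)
Your proposal is correct and follows essentially the same route as the paper: both take the canonical cut $[L_{2^l}^n,\overline{L_{2^l}^n}]$ around the $l$-dimensional sub-$H_n^4$ $\underbrace{0\cdots0}_{n-l}X_l\cdots X_1$, verify that both sides are connected and satisfy all four properties, and compute $\xi_{2^l}(H_n^4)=(n+1)2^l-(l+1)2^l=(n-l)2^l$. The only (immaterial) difference is that you cover $\overline{L_{2^l}^n}$ by $2^{n-l}-1$ equal $l$-dimensional blocks, whereas the paper uses the telescoping decomposition into sub-$H_\beta^4$'s of dimensions $\beta=l,l+1,\dots,n-1$.
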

\begin{proof}
Since
 $$H_n^4\left[L_{2^l}^n\right]=H_n^4[\underbrace{00 \cdots 00}_{n-l} X_l X_{l-1} \cdots X_2X_1]\cong H_l^4,$$
then $H_n^4\left[L_{2^l}^n\right]$ is connected.
It can be noticed that both the minimum degree and the average degree of $H_n^4[L_{2^l}^n]$ are $l+1$, for $H_l^4$ is a $(l+1)$-regular graph.
Owing to $|V(H_n^4[L_{2^l}^n])|=2^l$ and   $2\leq l\leq n-1$, $|V\left(H_n^4\left[\overline{L_{2^l}^n}\right]\right)|=2^n-2^l>2^l$.

For another thing, we can get that
  $H_n^4\left[\overline{L_{2^l}^n}\right]=\\H_n^4[\underbrace{00 \cdots 0}_{n-l-1} 1 X_l X_{l-1} \cdots X_1 \bigcup \underbrace{00 \cdots 0}_{n-l-2} 1 X_{l+1} X_l \cdots X_1 \bigcup \cdots \\\bigcup 01 X_{n-2} X_{n-3} \cdots X_1 \bigcup 1 X_{n-1} X_{n-2} \cdots X_1]$.
For each $l \leq$ $\beta \leq n-1, H_n^4[\underbrace{00 \cdots 0}_{n-\beta-1} 1 X_{\beta} X_{\beta-1} \cdots X_1]\cong H_{\beta}^4$.
Then $H_n^4\left[\overline{L_{2^l}^n}\right]$ is connected.
Not only the minimum degree but also the average degree of $H_n^4\left[\overline{L_{2^l}^n}\right]$ is at least $l+1$, for the reason that $H_{\beta}^4$ is a $(\beta+1)$-regular graph.
And thirdly, from the Lemma \ref{lem2.4},
deleting the edge-cut $[H_{2^l}^n,\overline{H_{2^l}^n}]$ of $H_n^4$ only produces exactly two components and both $H_n^4[L_{2^l}^n]$ and $H_n^4[\overline{H_{2^l}^n}]$ satisfy the properties of $\mathscr{P}_i^l$ for $1\leq i\leq4$. This yields that $[H_{2^l}^n,\overline{H_{2^l}^n}]$ is a $\mathscr{P}_i^l$-conditional edge-cut of $H_n^4$.
Hence, $\xi_{2^l}(H_n^4)=(n+1)2^l-2|E(H_n^4[L_{2^l}^n])|=(n+1)2^l-(l+1)2^l=(n-l)2^l$.
By the definition of $\lambda(\mathscr{P}_i^l,~H_n^4)$, we have $\lambda(\mathscr{P}_i^l,~H_n^4)\leq \xi_{2^l}(H_n^4)=(n-l)2^l$ for $1\leq i\leq4$.
\end{proof}
To provide a lower bound of $\lambda(\mathscr{P}_i^l,~H_n^4 )$ for $1\leq i\leq4$, we require the following lemma, which relates the number of vertices and the average degree of the induced subgraph of $H_n^4$.

\begin{lem}\label{lem6.2}
Let $I$ be a subset of $V(H_n^4)$. If the average degree of the induced subgraph $H_n^4[I]$ is $l$, then $I$ contains at least $2^{l-1}$ vertices.
\end{lem}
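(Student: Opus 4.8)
Write $m=|I|$ and let $d$ be the average degree of $H_n^4[I]$, so $2|E(H_n^4[I])|=dm$; by hypothesis $d=l$. Since $H_n^4[I]$ is an induced subgraph on $m$ vertices, the definition of $ex_m$ together with Theorem~\ref{thmex_m(H_n^4)=f(m)} yields $lm=2|E(H_n^4[I])|\le ex_m(H_n^4)=f(m)$. Consequently it is enough to prove the purely arithmetic estimate $f(m)\le m\log_2(2m)$: this gives $l\le\log_2(2m)$, hence $2m\ge 2^{l}$, i.e. $m\ge 2^{l-1}$, which is the claim (and the bound is best possible, attained when $I$ induces a copy of $H_{l-1}^4$, an $l$-regular graph on $2^{l-1}$ vertices).

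To bound $f(m)$, write $m=4p+q=\sum_{i=0}^{s}2^{t_i}$ as in the definition of $f$. Comparing the formula for $f(m)$ with Lemma~\ref{lem2.3}(1) gives $f(m)=ex_m(Q_n)+4p+\epsilon$, where $\epsilon=0$ if $0\le q\le 2$ and $\epsilon=2$ if $q=3$; since $4p=m-q\le m$ when $q\le 2$ and $4p+2=m-1<m$ when $q=3$, we obtain $f(m)\le ex_m(Q_n)+m$. It then remains to invoke the classical edge-isoperimetric inequality on the hypercube, namely $ex_m(Q_n)\le m\log_2 m$ (equivalently, every $m$-vertex induced subgraph of $Q_n$ has at most $\tfrac12 m\log_2 m$ edges). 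This is due to Harper; it can also be recovered from Lemma~\ref{lem2.3}(3) by induction on the number of binary digits of $m$, writing $m=2^{t_0}(1+\rho)$ with $0\le\rho<1$ and checking the single-variable inequality $\rho(\log_2\rho+2)\le(1+\rho)\log_2(1+\rho)$ on $[0,1)$. Combining, $f(m)\le ex_m(Q_n)+m\le m\log_2 m+m=m\log_2(2m)$, which closes the argument.

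\textbf{A calculus-free route when $l$ is an integer.} If $l\in\mathbb{Z}$ one can bypass the logarithmic inequality: it suffices to show $f(m)<lm$ for every $m\le 2^{l-1}-1$, since this contradicts $lm\le f(m)$. Such $m$ forces $t_0=\lfloor\log_2 m\rfloor\le l-2$, hence $t_i\le t_0-i\le l-2-i$ for all $i$. Using $f(m)\le ex_m(Q_n)+m=\sum_{i=0}^{s}(t_i+2i+1)2^{t_i}$, estimate $\sum_{i=0}^{s}\bigl(t_i+2i+1-l\bigr)2^{t_i}$: the $i=0$ term is at most $(l-2)+0+1-l=-1$ times $2^{t_0}$, i.e.\ $\le -2^{t_0}$, while for $i\ge 1$ one has $t_i+2i+1-l\le i-1$, so $\sum_{i\ge 1}\bigl(t_i+2i+1-l\bigr)2^{t_i}\le\sum_{i\ge 1}(i-1)2^{t_0-i}<2^{t_0}$ (a finite sum strictly below the full geometric value $2^{t_0}$). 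Hence the whole sum is strictly negative and $f(m)<lm$.

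\textbf{Main obstacle.} Everything above is bookkeeping with the explicit formula for $f$ except the hypercube estimate $ex_m(Q_n)\le m\log_2 m$ in the first route (or, dually, the clean control of the tail $\sum_{i\ge1}(i-1)2^{t_0-i}<2^{t_0}$ in the integer route); that single inequality, together with the reduction $f(m)\le ex_m(Q_n)+m$, is the crux of the proof.
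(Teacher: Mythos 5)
Your proof is correct, but it takes a genuinely different route from the paper's. The paper proves Lemma~\ref{lem6.2} by induction on $|I|$: it splits $I$ along a coordinate bit into $I_0\cup I_1$, bounds the number of matching edges between the two halves by $|I_0|$ (resp.\ $2|I_0|$), and derives the inequality $k(\log_2 k+1)\ge kl$ directly — in effect reproving from scratch the bound $ex_k(H_n^4)\le k\log_2(2k)$ without ever appealing to Theorem~\ref{thmex_m(H_n^4)=f(m)}. You instead leverage the exact value $ex_m(H_n^4)=f(m)$ already established in Section~\ref{sec3}, reduce via $f(m)\le ex_m(Q_n)+m$ (a correct reading of the definition of $f$ against Lemma~\ref{lem2.3}(1)), and close with the classical hypercube estimate $ex_m(Q_n)\le m\log_2 m$; both arguments terminate at the same inequality $l\le\log_2(2m)$. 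Your route is shorter and makes the real content transparent — the lemma is exactly the statement $ex_m(H_n^4)\le m\log_2(2m)$ — and your sharpness remark ($I$ inducing $H_{l-1}^4$) and the integer-$l$ bypass are both sound; I checked the single-variable inequality $\rho(\log_2\rho+2)\le(1+\rho)\log_2(1+\rho)$ on $[0,1)$ and the tail estimate $\sum_{i\ge1}(i-1)2^{t_0-i}<2^{t_0}$, and both hold. The one dependency to flag is that $ex_m(Q_n)\le m\log_2 m$ is not stated anywhere in the paper; it is classical and, as you note, recoverable from Lemma~\ref{lem2.3}(3), but a referee would want that derivation written out (or a citation supplied) rather than sketched. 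The paper's version, by contrast, is self-contained but its induction step is presented rather loosely.
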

\begin{proof}
We deduce, by induction on $|I|$.
This is  true for $|I|=2$. Suppose, then, that $|I|\geq 3$.
Assume that  $I\subset V\left(H_n^4\right)$, $|I|=k$ and the average degree of the induced subgraph $H_n^4[I]$ is $l$.
Let the vertex $y_{n}y_{n-1}\cdots y_2y_1\in V\left({H_n^4}^{i(0)}\right)$ or $V\left({H_n^4}^{i(1)}\right)$ if and only if the $i$-bit $y_i=0$ or $y_i=1$.
If $H_n^4={H_n^4}^{i(0)}\bigcup {H_n^4}^{i(1)}$, then it can be asserted that $H_n^4$ can be decomposed into two subgraphs on the $i$-bit for $1\leq i\leq n$.
 Two subgraphs ${H_n^4}^{i(0)}$ and ${H_n^4}^{i(1)}$ may composed into $H_n^4$ for $1\leq i\leq n$ and  $V\left(H_n^4\right)=V\left({H_n^4}^{i(0)}\right)\bigcap V\left({H_n^4}^{i(1)}\right)$.
Let $I_\alpha=I\cap V\left({H_n^4}^{i(\alpha)}\right)$ with $|I_\alpha|=k_\alpha$, for $\alpha=0,~1$.
Assume that $k_0\leq k_1 $ without loss of generality.
Let there exist $m$ edges of $H_n^4[I]$ between $I_0$ and $I_1$.
For each vertex in $I_0$, there are at most $2$ edges and $1$ edge are incident with $V(I_1)$ for $i=1,~2$ and $i=3$, respectively. Thus, $m\leq 2k_0$ and $m\leq k_0$, respectively.
For $i=1,\,2$, we have
$k(\log_2k+1)\geq k_0\cdot\log_2k_0+k_1\cdot\log_2k_1+2\cdot 2k_0
\geq
k_0\cdot\log_2k_0+k_1\cdot\log_2k_1+2m
\geq
\sum_{v\in I}d_{H_n^4}(v)=kl$.
For $3\leq i\leq n$, thus
$k(\log_2k+1)\geq
k_0\cdot(\log_2k_0+1)+k_1\cdot(\log_2k_1+1)+2\cdot k_0
\geq
k_0\cdot(\log_2k_0+1)+k_1\cdot(\log_2k_1+1)+2m
\geq \sum_{v\in I}d_{H_n^4}(v)=kl$.
 Above all, $k\left(\log_2k+1\right)\geq kl$.
 It can be checked that $k\geq 2^{l-1}$.
 The proof is completed.
\end{proof}

\begin{lem}\label{lem6.3}
Let $n,\,l,\,i$ be three integers with $n\geq 2$, $2\leq l\leq n-1$, and $1\leq i\leq4$.
Then $\lambda(\mathscr{P}_i^l,~H_n^4 )\geq \xi_{2^l}(H_n^4)=(n-l)2^l$.
\end{lem}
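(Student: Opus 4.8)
The plan is to bound $\lambda(\mathscr{P}_i^l,H_n^4)$ from below by $\xi_{2^l}(H_n^4)$ and then to evaluate $\xi_{2^l}(H_n^4)$ explicitly. First I would take a minimum $\mathscr{P}_i^l$-edge-cut $F$ of $H_n^4$. By Lemma~\ref{lem2.4}, $H_n^4-F$ has exactly two components, say $H_n^4[X]$ and $H_n^4[\overline{X}]$, and we may assume $|X|=m\le 2^{n-1}$. Since both components are connected, $F=[X,\overline{X}]$ and
$$|F|=(n+1)m-2|E(H_n^4[X])|\ge (n+1)m-ex_m(H_n^4)=\xi_m(H_n^4),$$
where the last equality uses Theorem~\ref{thmex_m(H_n^4)=f(m)} and the fact that $\xi_m(H_n^4)=\xi_m^e(H_n^4)$.

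Next I would establish the size estimate $m\ge 2^l$. For $i=3$ this is immediate from the definition of the $2^l$-extra edge-connectivity, and for $i=4$ every vertex of $X$ lies in an intact $l$-dimensional sub-$H_n^4$ contained in $H_n^4[X]$, so again $m\ge 2^l$. For $i=1$ and $i=2$ the condition that the minimum degree of $H_n^4[X]$ is at least $l$ forces the average degree of $H_n^4[X]$ to be at least $l$, so Lemma~\ref{lem6.2} applies; to strengthen its conclusion to $m\ge 2^l$ I would split $H_n^4$ on a single coordinate into two copies of $H_{n-1}^4$ joined by a perfect matching, let $X_0,X_1$ be the traces of $X$ in the two copies, observe that a vertex of a nonempty trace keeps degree at least $l-1$ inside its own copy because it gains at most one matching edge, and then induct on $n$. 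With $m\ge 2^l$ at hand, Lemma~\ref{lem4.4} (applied with $c=l$, which is legitimate when $l\le n-2$; if $l=n-1$ then necessarily $m=2^{n-1}$ and the inequality is trivial) gives $\xi_m(H_n^4)\ge \xi_{2^l}(H_n^4)$, hence $\lambda(\mathscr{P}_i^l,H_n^4)=|F|\ge \xi_{2^l}(H_n^4)$.

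To close, Lemma~\ref{lem6.1} already shows $H_n^4[L_{2^l}^n]\cong H_l^4$, which is $(l+1)$-regular, so $\xi_{2^l}(H_n^4)=(n+1)2^l-2|E(H_n^4[L_{2^l}^n])|=(n+1)2^l-(l+1)2^l=(n-l)2^l$, as required. The step I expect to be the main obstacle is the estimate $m\ge 2^l$ for the minimum-degree and average-degree patterns: Lemma~\ref{lem6.2} by itself only yields $m\ge 2^{l-1}$, and $\xi_{2^{l-1}}(H_n^4)=(n+1-l)2^{l-1}$ falls strictly short of the target $(n-l)2^l$ once $l\le n-2$, so a naive combination of Lemma~\ref{lem6.2} with Lemma~\ref{lem4.4} is not enough. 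Getting the sharper bound requires genuinely using the recursion $H_n^4=H_{n-1}^4\cup H_{n-1}^4$ with a perfect matching added, together with careful control over how many edges can leave a fixed sub-$H_n^4$ at the $K_4$-level, where one vertex may be adjacent to as many as three vertices outside it; this is precisely the kind of bookkeeping that already makes the proof of Lemma~\ref{lem6.2} delicate.
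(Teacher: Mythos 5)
Your overall skeleton --- a minimum $\mathscr{P}_i^l$-cut $F$, exactly two components via Lemma~\ref{lem2.4}, the bound $|F|\ge(n+1)m-ex_m(H_n^4)=\xi_m(H_n^4)$ for $m=|X|$, a size estimate $m\ge 2^l$, and then Lemma~\ref{lem4.4} --- is exactly the paper's, and your treatment of $i=3$ (immediate from the definition) and $i=4$ (every vertex lies in an intact $H_l^4$, so $m\ge 2^l$) is correct and in fact more explicit than what the paper writes. The closing computation $\xi_{2^l}(H_n^4)=(n+1)2^l-(l+1)2^l=(n-l)2^l$ is also fine.

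The gap is where you predicted it, but it is worse than you anticipate: the strengthening of Lemma~\ref{lem6.2} that you propose for $i=1,2$ --- ``minimum degree at least $l$ forces $m\ge 2^l$'' --- is false in $H_n^4$. For $l\ge 3$ the recursive construction provides an induced sub-$H_{l-1}^4$ on $2^{l-1}$ vertices which is $l$-regular, so its vertex set has minimum (hence average) degree exactly $l$ yet only $2^{l-1}$ vertices; for $l=2$ a triangle inside a $K_4$-block does the same with $3<2^2$ vertices. Consequently your proposed induction on the two-copy decomposition must bottom out at the $l$-regular $H_{l-1}^4$ and can never deliver more than the $2^{l-1}$ already given by Lemma~\ref{lem6.2}; no bookkeeping of matching edges repairs this. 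Note moreover that the boundary of such a sub-$H_{l-1}^4$ has size $(n+1-l)2^{l-1}$, which is strictly below the target $(n-l)2^l$ whenever $l\le n-2$, so the degree-$\ge l$ hypothesis on the small component alone cannot force the stated bound. The paper resolves this step differently: it asserts that the small component $S$ satisfies $\delta(S)\ge l+1$, hence has average degree at least $l+1$, and applies Lemma~\ref{lem6.2} with parameter $l+1$ to conclude $|V(S)|\ge 2^l$. That assertion is immediate for $i=4$ (every vertex sits in an intact $(l+1)$-regular $H_l^4$) and unnecessary for $i=3$, but the paper offers no justification of it for the degree-at-least-$l$ patterns $\mathscr{P}_1^l$ and $\mathscr{P}_2^l$; so you have correctly isolated the one genuinely delicate point of the lemma, but the fix you sketch cannot be completed, and a correct argument for $i=1,2$ would have to establish the degree bound $l+1$ (or some other route to $m\ge 2^l$) rather than strengthen Lemma~\ref{lem6.2}.
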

\begin{proof}
Considering the given information, assume that  $F$ is any minimum $\mathscr{P}_i^l$-conditional edge-cut of $H_n^4$ and $|F|=\lambda(\mathscr{P}_i^l,~H_n^4 )$.
By  Lemma \ref{lem2.4}, deleting $F$  produces exactly two components and each component possesses $\mathscr{P}_i^l$.
Let   $S$ stand for the smaller component.
If  $\delta(S)\geq l+1$, then $d(S)\geq l+1$. This is clearly true $|V(S)|\geq 2^l$ by Lemma~\ref{lem6.2}.
Moreover, by Formula~(\ref{equa1}) and Lemma~\ref{lem4.4}, $|F|=|[V(S),\overline{V(S)}]_{H_n^4}|\geq \xi_{|V(S)|}(H_n^4)\geq \xi_{2^l}(H_n^4)=(n-l)2^l$.
We thus have $\lambda(\mathscr{P}_i^l,~H_n^4)\geq \xi_{2^l}(H_n^4)=(n-l)2^l$ for $n\geq 2$, $2\leq l\leq n-1$ and $1\leq i\leq4$, the inequality following from Formula (\ref{equa2}).
\end{proof}

Now we are ready to prove the Theorem~\ref{thm3}.
\\
\textbf{The proof of Theorem~\ref{thm3}}\quad
 By the Lemma~\ref{lem6.1} and Lemma~\ref{lem6.3}, we firstly can have $\lambda(\mathscr{P}_i^l,~H_n^4)=\xi_{2^l}(H_n^4)=(n-l)2^l$ holds for $1\leq i\leq4$.

Next, we consider the cyclic edge-connectivity $\lambda(\mathscr{P}_5^l,H_n^4)$.
For $n\geq 3$, assume $F$ is  any cyclic edge-cut of $H_n^4$. This suggests that $G-F$ yields two components and  let $S$ be the smaller one.
 Thus $|V(S)|\geq3$ in virtue of $S$ contains a cycle.
 Note that $\xi_m(H_n^4)$ is larger than $\xi_3(H_n^4)$ and $\xi_4(H_n^4)$ for $n\geq3 $ and $5\leq m\leq 2^{n-1}$.
It can be  obtained that  $|F|=|[V(S),\overline{V(S)}]_{H_n^4}|\geq \xi_{|V(S)|}(H_n^4)\geq \min\{\xi_3(H_n^4),~\xi_4(H_n^4)\}$.
Also $\xi_4(H_n^4)=4n-8\geq \xi_3(H_n^4)=3n-3$ for $n\geq5$.
Moreover, $[L_3^n,\overline{L_3^n}]$ and $[L_4^n,\overline{L_4^n}]$ are cyclic edge-cuts of $H_n^4$ and
the cardinality of $[L_3^n,\overline{L_3^n}]$ and $[L_4^n,\overline{L_4^n}]$ are $\xi_3(H_n^4)$ and $\xi_4(H_n^4)$, respectively.
Given the above, it is now  checked that
$\lambda(\mathscr{P}_5^l,~H_n^4)=
 4n-8$ if $n=3,~4$ and
 $\lambda(\mathscr{P}_5^l,~H_n^4)=3n-3$ if $n\geq 5.$
$\hfill\blacksquare$
\begin{rem}
For $n\geq 2,\,0\leq l\leq 1$, it is supplemented that $\lambda^l(H_n^4)=\overline{\lambda^l}(H_n^4)=\lambda_{2^l}(H_n^4)=(n+1-l)2^l$.
\end{rem}

By Theorem \ref{thm3}, the exact values of
$\lambda\left(\mathscr{P}_i^l,~H_n^4\right)$   are showed in Table \ref{tab2} within the range $3 \leq n \leq 7$ and $1\leq i\leq4$.
These data and images effectively diagnose the reliability of generalized $K_4$-Hypercubes.

\section{Conclusion}\label{sec6}
This paper provides the exponential edge fault-tolerance and  explores the reliability  of the class of  generalized $K_4$-hypercubes based on five link fault patterns, which are the $h$-extra edge-connectivity $\lambda_h$, the $l$-edge-connectivity $\lambda^l$, the $l$-average degree edge-connectivity $\overline{\lambda^l}$, the $l$ -embedded edge-connectivity $\eta_l$, and  the cyclic edge-connectivity $\lambda_c$.
We present the precise values of  $\lambda_h(H_n^4)$ for   $1\leq h\leq 2^{\lceil n/2  \rceil }$ and
 $\lambda_h(H_n^4)$ demonstrates $\lfloor n/2\rfloor-1$ concentration phenomena  for  $2^{\lceil n/2\rceil+t}-g_t \le h\le2^{\lceil n/2\rceil+t}$, $0\leq t \leq\lfloor n/2\rfloor-1 $, $g_t=\lceil(2^{2t+2+\gamma})/3\rceil$.
Moreover,  $\lambda^l(H_n^4)$, $\overline{\lambda^l}(H_n^4)$, $\lambda_{2^l}(H_n^4)$ and $\eta_l(H_n^4)$ share a common value $(n-l)2^l$ for $n\geq3$, $2\leq l\leq n-1$ and $ 1\leq i\leq4$ and  determine that $\lambda_c(H_n^4)$ for $n\geq3$.
Specifically, due to the enhanced hypercube $Q_{n,n-1}\in \mathscr{H}_n^4$, all the conclusions in this paper apply to $Q_{n,n-1}$ as well.
In the future, we may also attempt to design an algorithm to solve the precise values of the remaining  intervals of $\lambda_h(H_n^4)$.

The findings of the article demonstrate that the optimal solution of the edge isoperimetric problem can be applied to explore several conditional edge-connectivities and can measure the reliability and fault tolerance of interconnection networks.
Afterwards, we can further study that as $n$ tends to infinity, the function  $\xi_h(H_n^4)$ is continuous everywhere but not differentiable and has a fractal structure.
By converting the optimal solution   of the edge isoperimetric problem into the construction of a continuous everywhere but not differentiable function, we could establish a link between seemingly unrelated graph theory and special continuous functions.
Similarly, this method can be applied to evaluate various conditional edge-connectivities in other interconnection networks.

\begin{thebibliography}{10}
\providecommand{\url}[1]{#1}
\csname url@samestyle\endcsname
\providecommand{\newblock}{\relax}
\providecommand{\bibinfo}[2]{#2}
\providecommand{\BIBentrySTDinterwordspacing}{\spaceskip=0pt\relax}
\providecommand{\BIBentryALTinterwordstretchfactor}{4}
\providecommand{\BIBentryALTinterwordspacing}{\spaceskip=\fontdimen2\font plus
\BIBentryALTinterwordstretchfactor\fontdimen3\font minus
  \fontdimen4\font\relax}
\providecommand{\BIBforeignlanguage}[2]{{%
\expandafter\ifx\csname l@#1\endcsname\relax
\typeout{** WARNING: IEEEtranS.bst: No hyphenation pattern has been}%
\typeout{** loaded for the language `#1'. Using the pattern for}%
\typeout{** the default language instead.}%
\else
\language=\csname l@#1\endcsname
\fi
#2}}
\providecommand{\BIBdecl}{\relax}
\BIBdecl

\bibitem{bezrukov2018new}
S.~Bezrukov, P.~Bulatovic, and N.~Kuzmanovski, ``New infinite family of regular
  edge-isoperimetric graphs,'' \emph{Theoretical computer science}, vol. 721,
  pp. 42--53, 2018.

\bibitem{descartes1976ja}
J.~A. Bondy and U.~S.~R. Murty, \emph{Graph Theory}.\hskip 1em plus 0.5em minus
  0.4em\relax New York, NY, USA: Springer, 2008.

\bibitem{brudnak2021connectivity}
J.~Brudnak and L.~Lipt{\'a}k, ``Connectivity properties of generalized
  ${K_4}$-hypercubes,'' \emph{Journal of Interconnection Networks}, vol.~21,
  no.~03, p. 2150015, 2021.

\bibitem{fabrega1996extraconnectivity}
J.~F{\`a}brega and M.~A. Fiol, ``On the extraconnectivity of graphs,''
  \emph{Discrete Mathematics}, vol. 155, no. 1-3, pp. 49--57, 1996.

\bibitem{fan2003bc}
J.-X. Fan and L.-Q. He, ``{BC} interconnection networks and their properties,''
  \emph{Chinese Joural of Computers}, vol.~26, no.~1, pp. 84--90, 2003.

\bibitem{harary1983conditional}
F.~Harary, ``Conditional connectivity,'' \emph{Networks}, vol.~13, no.~3, pp.
  347--357, 1983.

\bibitem{harper1964optimal}
L.~H. Harper, ``Optimal assignments of numbers to vertices,'' \emph{Journal of
  the Society for Industrial and Applied Mathematics}, vol.~12, no.~1, pp.
  131--135, 1964.

\bibitem{li2013bounding}
H.~Li and W.~Yang, ``Bounding the size of the subgraph induced by m vertices
  and extra edge-connectivity of hypercubes,'' \emph{Discrete Applied
  Mathematics}, vol. 161, no. 16-17, pp. 2753--2757, 2013.

\bibitem{li2024reliability}
S.-Y. Li, X.-J. Li, and M.~Ma, ``Reliability assessment for $k$-ary $n$-cubes
  with faulty edges,'' \emph{Journal of Parallel and Distributed Computing}, p.
  104886, 2024.

\bibitem{li2013edge}
X.-J. Li and J.-M. Xu, ``Edge-fault tolerance of hypercube-like networks,''
  \emph{Information Processing Letters}, vol. 113, no. 19-21, pp. 760--763,
  2013.

\bibitem{liu2024reliable}
H.~Liu, M.~Zhang, S.-Y. Hsieh, and C.-W. Lee, ``A novel links fault tolerant
  analysis: $g$-good $r$-component edge-connectivity of interconnection
  networks with applications to hypercubes,'' \emph{IEEE Transactions on
  Reliability}, 2024, {DOI}:{\color{blue}
  \href{http://x.DDOI.org/10.1109/TR.2024.3410526} {10.1109/TR.2024.3410526}}.

\bibitem{liu2023many}
H.~Liu, M.~Zhang, and X.~Xu, ``Many-to-many edge-disjoint paths in $(n,
  k)$-enhanced hypercube under three link-faulty hypotheses,''
  \emph{Theoretical Computer Science}, vol. 957, p. 113855, 2023.

\bibitem{liu2023modified}
H.~Liu, M.~Zhang, and W.~Yang, ``On modified $l$-embedded edge-connectivity of
  enhanced hypercubes,'' \emph{The Journal of Supercomputing}, vol.~80, pp.
  10\,357--10\,369, 2023.

\bibitem{Lv2023AHA}
M.~Lv, J.-X. Fan, W.~Fan, and X.~Jia, ``A high-performantal and server-centric
  based data center network,'' \emph{IEEE Transactions on Network Science and
  Engineering}, vol.~10, pp. 592--605, 2023.

\bibitem{menger1927allgemeinen}
K.~Menger, ``Zur allgemeinen kurventheorie,'' \emph{Fundamenta Mathematicae},
  vol.~10, no.~1, pp. 96--115, 1927.

\bibitem{stergiou2020secure}
C.~L. Stergiou, A.~P. Plageras, K.~E. Psannis, and B.~B. Gupta, ``Secure
  machine learning scenario from big data in cloud computing via internet of
  things network,'' \emph{Handbook of Computer Networks and Cyber Security:
  Principles and Paradigms}, pp. 525--554, 2020.

\bibitem{sullivan1977large}
H.~Sullivan and T.~R. Bashkow, ``A large scale, homogeneous, fully distributed
  parallel machine, $\text{I}$,'' in \emph{International Symposium on Computer
  Architecture}, 1977, pp. 105--117.

\bibitem{tait1880remarks}
P.~G. Tait, ``Remarks on the colouring of maps,'' in \emph{Proceedings of the
  Royal Society of Edinburgh}, vol.~10, no. 729, 1880, pp. 501--503.

\bibitem{tzeng1991enhanced}
N.-F. Tzeng and S.~Wei, ``Enhanced hypercubes,'' \emph{IEEE Transactions on
  Computers}, vol.~40, no.~03, pp. 284--294, 1991.

\bibitem{wang2024machine}
F.~Wang, H.~Wang, and O.~Ranjbar~Dehghan, ``Machine learning techniques and big
  data analysis for internet of things applications: A review study,''
  \emph{Cybernetics and Systems}, vol.~55, no.~1, pp. 1--41, 2024.

\bibitem{xu2001topological}
J.~Xu, \emph{Topological structure and analysis of interconnection
  networks}.\hskip 1em plus 0.5em minus 0.4em\relax Dordrecht: Kluwer Academic
  Publishers, 2001.

\bibitem{xu2023reliability}
L.~Xu and S.~Zhou, ``Reliability assessment of interconnection networks based
  on link fault patterns,'' \emph{IEEE Transactions on Reliability}, vol.~73,
  no.~2, pp. 1302--1309, 2024.

\bibitem{xu2021reliability}
L.~Xu, S.~Zhou, J.~Liu, and S.~Yin, ``Reliability measure of multiprocessor
  system based on enhanced hypercubes,'' \emph{Discrete Applied Mathematics},
  vol. 289, pp. 125--138, 2021.

\bibitem{yang2023average}
Y.~Yang, M.~Zhang, J.~Meng, and R.~Chen, ``The $a$-average degree
  edge-connectivity of bijective connection networks,'' \emph{The Computer
  Journal}, vol.~66, no.~9, pp. 2118--2122, 2023.

\bibitem{yang2023embedded}
Y.~Yang, ``Embedded edge connectivity of $k$-ary $n$-cubes,'' \emph{Information
  Processing Letters}, vol. 180, p. 106328, 2023.

\bibitem{yang2012conditional}
Y.~Yang and S.~Wang, ``Conditional connectivity of star graph networks under
  embedding restriction,'' \emph{Information Sciences}, vol. 199, pp. 187--192,
  2012.

\bibitem{zhang2022unified}
M.~Zhang, H.~Liu, and W.~Lin, ``A unified approach to reliability and edge
  fault tolerance of cube-based interconnection networks under three
  hypotheses,'' \emph{The Journal of Supercomputing}, vol.~78, pp. 7936--7947,
  2022.

\bibitem{zhang2016extra}
M.~Zhang, J.~Meng, and W.~Yang, ``On the extra edge-connectivity of
  hypercubes,'' \emph{Applied Mathematics-A Journal of Chinese Universities},
  vol.~31, no.~2, pp. 198--204, 2016.

\bibitem{zhang2018log}
M.~Zhang, L.~Zhang, X.~Feng, and H.-J. Lai, ``An ${O}$$(\log_2({N}))$ algorithm
  for reliability evaluation of $ h $-extra edge-connectivity of folded
  hypercubes,'' \emph{IEEE Transactions on Reliability}, vol.~67, no.~1, pp.
  297--307, 2018.

\bibitem{Zhang.Liu}
Y.~Zhang, H.~Liu, and J.~Dan, ``On the conditional edge connectivity of
  enhanced hypercube networks,'' \emph{Annals of Applied Mathematics}, vol.~34,
  no.~03, pp. 103--114, 2018.

\bibitem{zhu2006restricted}
Q.~Zhu and J.-M. Xu, ``On restricted edge connectivity and extra edge
  connectivity of hypercubes and folded hypercubes,'' \emph{Journal of
  University of Science and Technology of China}, vol.~36, no.~3, pp. 246--253,
  2006.

\end{thebibliography}

 \vspace{0.3cm}
\begin{IEEEbiography}[{\includegraphics[width=1in,height=1.25in,clip,keepaspectratio]{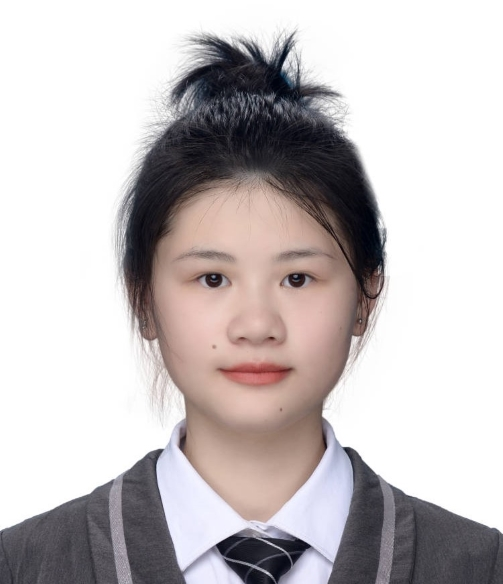}}]{Shuqian Cheng}
received the B.S. degree in mathematics and applied mathematics from China West Normal University, Nanchong, China, in 2022. Currently, she is  pursuing the M.S. degree in mathematics at Xinjiang University, Urumqi, China. Her research interests include
graph theory and its applications,
reliability analysis of interconnection networks, data center networks, and design and analysis of algorithms.
\end{IEEEbiography}
\vspace{-1.5cm}
\begin{IEEEbiography}[{\includegraphics[width=1in,height=1.25in,clip,keepaspectratio]{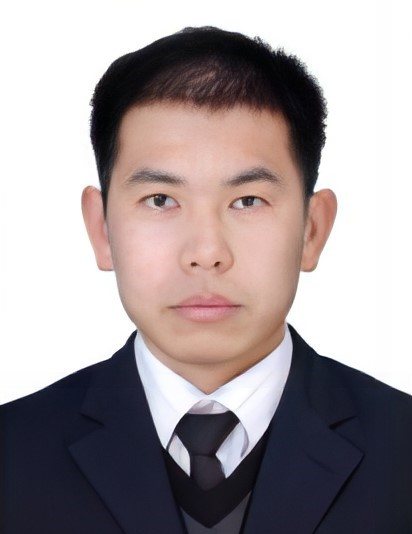}}]{Mingzu Zhang}
received the B.S. degree in mathematics from the Tianjin University of Technology and Education, Tianjin, China, in 2010, the M.S. degree in mathematics from Xinjiang University, Urumqi, China, in 2014, and the Ph.D. degree in applied mathematics from the School of Mathematical Science, Xiamen University, Xiamen, China, in 2018.
Currently, he is an Associate Professor with the Department of Mathematics at Xinjiang University, Urumqi, China. He has published more than  thirty  papers in journals \textsl{IEEE Transactions on Reliability}, \textsl{Information Sciences}, \textsl{Journal of Combinatorial Optimization}, \textsl{Discrete Applied Mathematics}, \textsl{Theoretical Computer Science} and \textsl{Journal of Parallel and Distributed Computing}.
His main research interests include
data center networks,
network optimization, fractal graph theory, and algorithms.
\end{IEEEbiography}

\begin{IEEEbiography}[{\includegraphics[width=1in,height=1.25in,clip,keepaspectratio]{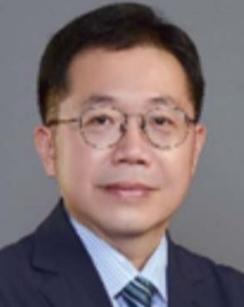}}]{Sun-Yuan Hsieh}(Fellow, IEEE) received the Ph.D. degree in computer science from National Taiwan University, Taipei, Taiwan, in 1998. Then, he served the compulsory military two years. He held the position of Assistant Professor with the Department of Computer Science and Information Engineering at National Chi Nan University from 2000 to 2002. In February 2002, he was appointed as a member of  the Department of Computer Science and Information Engineering, National Cheng Kung University.
 Dr. Hsieh awards include the 2020 ACM Distinguished Scientist,
 the  2019 Kwoh Ting Li Honorable Scholar Award,
 the 2016 Outstanding Research Award of Taiwan Ministry of Science and Technology,
 the 2014 Outstanding Engineering Professor Award of Chinese Institute of Engineers.
the  2013 Outstanding Electronic Engineering Professor Award of Chinese Institute of Electrical Engineers,
  from the IEEE Tainan Section, in 2011, the  IEEE Outstanding Technical Achievement Award  and so on.

   He is fellow of the Institution of Engineering and Technology and fellow of the British Computer Society.  He has an outstanding record of research achievements in the fields of algorithms and fault-tolerant computing for interconnection networks. His current research interests include data center, graph theory, fault diagnosis, and design and analysis of algorithms.
\end{IEEEbiography}

\begin{IEEEbiography}[{\includegraphics[width=1in,height=1.25in,clip,keepaspectratio]{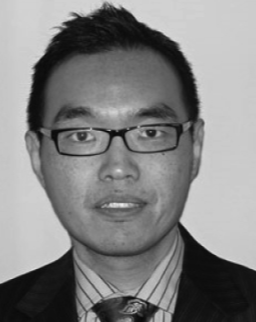}}]{Eddie Cheng}
received the Ph.D. degree in combinatorics and optimization from the University of Waterloo, Canada, in 1995.
He is with the Department of Mathematics and Statistics, Oakland University.
From 2010 to 2013, he was the Chair of the Department  and from January 2016 to April 2016, was the Acting Chair.
 He is an editor for numerous journals, including Networks, International Journal of Machine Learning and Cybernetics, Discrete Applied Mathematics, Journal of Interconnection Networks, International Journal of Computer Mathematics: Computer Systems Theory, and International Journal of Parallel, Emergent and Distributed Systems.
 His main research interests include interconnected network, fault diagnosis, binary string, and breadth-first search.
\end{IEEEbiography}
\end{document}